\theoremstyle{definition}
\newtheorem{theorem}{Theorem}[section]
\newtheorem{proposition}[theorem]{Proposition}
\newtheorem{definition}[theorem]{Definition}
\newtheorem{corollary}[theorem]{Corollary}
\newtheorem{lemma}[theorem]{Lemma}
\newtheorem{problem}[theorem]{Problem}
\newtheorem{remark}[theorem]{Remark}
\newtheorem{example}[theorem]{Example}
\newcommand{\equicc}[4][1 cm]{
\draw (#3,#4) circle (#1);
\pgfmathparse{#1/1 cm+0.25};
\edef\oc{\pgfmathresult cm};
  \foreach \i in {1,...,#2} {
    \coordinate (N\i) at ($(#3,#4)+ (90-\i*360/#2:#1)$);
    \fill[black] (N\i) circle (0.05 cm);
    \draw ($(#3,#4)+ (90-\i*360/#2:\oc)$) node{$\i$};
  }
  \foreach \i in {2,...,#2} {
    \pgfmathparse{\i-1}
    \edef\j{\pgfmathresult}
  }
}
\newcommand{\equiccnolabels}[4][1 cm]{
\draw (#3,#4) circle (#1);
\pgfmathparse{#1/1 cm+0.25};
\edef\oc{\pgfmathresult cm};
  \foreach \i in {1,...,#2} {
    \coordinate (N\i) at ($(#3,#4)+ (90-\i*360/#2:#1)$);
    \fill[black] (N\i) circle (0.05 cm);
  }
  \foreach \i in {2,...,#2} {
    \pgfmathparse{\i-1}
    \edef\j{\pgfmathresult}
  }
}
\begin{document}
\title{Rotation invariant webs for three row flamingo Specht modules}

\author{Jesse Kim}
\address
{Department of Mathematics \newline \indent
University of California, San Diego \newline \indent
La Jolla, CA, 92093-0112, USA}
\email{jvkim@ucsd.edu}

\begin{abstract}

We introduce a new rotation-invariant web basis for a family of Specht modules $S^{(d^3, 1^{n-3d})}$, indexed by normal plabic graphs satisfying a degree condition and resembling $A_2$ webs. We show that the $\mathfrak{S}_n$ action on our basis can be understood combinatorially via a set of skein relations. From this basis, we obtain a cyclic sieving result for a $q$-analog of the hook length formula for $\lambda$. Our construction extends the jellyfish invariants of Fraser, Patrias, Pechenik, and Striker and is closely related to the weblike subgraphs of Lam.
\end{abstract}

\maketitle

\section{Introduction}

This paper introduces a new {\em web basis} for a family of $\mathfrak{S}_n$ modules indexed by partitions $(d,d,d,1^{n-3d})$. The systematic study of web bases began with work of G. Kuperberg \cite{Kuperberg} in order to study the space of invariant tensors for simple Lie algebras and their quantum groups, though examples which are now considered web bases predate the term.  What exactly constitutes a web basis differs somewhat between authors, we will use a list of properties laid out by C. Fraser, R. Patrias, O. Pechenik, and J. Striker in \cite{FPPS}. The properties they give are:

\begin{enumerate}[(1)]
\item Each basis element is indexed by a planar diagram with $n$ boundary vertices, embedded in a disk.
\item There is a topological criterion allowing identification of basis diagrams.
\item The action of the long cycle $c= (12\dots n)$ on the basis is by rotation of diagrams.
\item The action of the long element $w_0 \in n(n-1)\dots 1$ on the basis is by reflection of diagrams.
\item There is a finite list of `skein relations' describing the action of a simple transposition $s_i$ on a basis diagram.
\end{enumerate}

The simplest web basis is the Temperley-Lieb basis for two-row rectangle shapes $(d,d)$, indexed by noncrossing perfect matchings of $2d$ vertices and studied by a variety of authors \cite{TL, RTW, Kuperberg, Rho10}. One useful property of the Temperley-Lieb basis is that it makes computation of the action of $\mathfrak{S}_n$ easy: to act by a permutation on a basis element, simply permute the matching, potentially introducing crossings, then resolve each crossing by replacing it with an uncrossing in both possible ways. This crossing resolution is called a skein relation, shown below.

\begin{center}
\begin{tikzpicture}
\draw[fill=black] (0,0) circle (.02in);
\draw[fill=black] (1,0) circle (.02in);
\draw[fill=black] (2,0) circle (.02in);
\draw[fill=black] (3,0) circle (.02in);

\draw (2,0) arc (30:150:1.154);
\draw (3,0) arc (30:150:1.154);

\node at (4,.4) {$\rightarrow$};

\draw[fill=black] (5,0) circle (.02in);
\draw[fill=black] (6,0) circle (.02in);
\draw[fill=black] (7,0) circle (.02in);
\draw[fill=black] (8,0) circle (.02in);

\draw (6,0) arc (30:150:.577);
\draw (8,0) arc (30:150:.577);

\node at (8.5,.4) {$+$};

\draw[fill=black] (9,0) circle (.02in);
\draw[fill=black] (10,0) circle (.02in);
\draw[fill=black] (11,0) circle (.02in);
\draw[fill=black] (12,0) circle (.02in);

\draw (11,0) arc (30:150:.577);
\draw (12,0) arc (30:150:1.73);
\end{tikzpicture}
\end{center}

Kuperberg introduced similar bases for invariant spaces for rank-two Lie algebras. In this paper we will primarily be interested in the type $A_2$; the Temperley-Lieb basis is type $A_1$. In type $A_2$, Kuperber's basis is indexed by bipartite trivalent planar graphs with no faces of degree less than 6, called nonelliptic $SL_3$ webs. For each nonelliptic $SL_3$ web, the corresponding element of $V^{\otimes n}$, where $V$ is the three-dimensional defining representation of $SL_3$, is defined either recursively, in terms of tensor product and contraction, or combinatorially, in terms of a weighted sum over all proper edge coloring of the web with 3 colors. These form a basis for the space of $SL_3$ invariants of $V^{\otimes n}$, where $SL_3$ acts diagonally on $V^{\otimes n}$. The symmetric group $\mathfrak{S}_n$ acts on this invariant space by permuting tensor factors.  As an $\mathfrak{S}_n$ module, the $SL_3$ invariant space of $V^{\otimes n}$ is irreducible and isomorphic to the Specht module of shape $(d,d,d)$ where $n=3d$, and is 0 if $n$ is not a multiple of 3. Various ways to generalize this construction to $SL_n$ have been studied \cite{fontaine,CKM}, though a rotation invariant version is known only for $n$ up to 4 \cite{GPPSS}.

Rhoades generalized the Temperley-Lieb basis in a different direction, giving a web basis for shapes $(d,d,1^{n-2d})$ indexed by noncrossing set partitions with parts of size at least two \cite{Rhoskein}. In Rhoades' action, crossing resolution involved four skein relations based on the sizes of the crossing blocks. In previous work with Rhoades \cite{meR}, we showed that this action of noncrossing set partitions could be found within the top degree component of the fermionic diagonal coinvariant ring. One main goal of this paper is to generalize Kuperberg's $SL_3$ webs in an analogous way to Rhoades' generalization of the Temperley-Lieb basis. 

To do so, we build upon work of  R. Patrias, O. Pechenik, and J. Striker. In \cite{PPS}, they introduce {\em jellyfish invariants}, giving an alternate construction of Rhoades' basis within the homogeneous coordinate ring of a 2-step partial flag variety. Along with C. Fraser, they further develop these jellyfish invariants, reinterpreting them within the homogeneous coordinate ring of a Grassmanian \cite{FPPS}. They also extend their definitions to give jellyfish invariants living within a Specht modules indexed by a partition of shape $(d^r, 1^{n-rd})$. They dub these {\em flamingo Specht modules} as the partitions indexing them appear to stand on one leg. In the case $r>2$, they do not give a basis for this module. Instead, they give a linearly independent set indexed by noncrossing set partitions and a spanning set indexed by all set partitions. 

Our first result is to extend their linearly independent set to a basis of $S^{(d^r, 1^{n-rd})}$ in the case $r>2$. We do so by replacing the noncrossing condition with a weaker one based on ideas introduced by P. Pylyavskyy in \cite{Pylyavsky}, which we call {\em $r$-weakly noncrossing}. This basis fails to have the rotation and reflection invariance desired of a web basis, however. Our second result is to remedy the lack of rotation invariance in the case $r=3$ by introducing a second basis indexed by a certain rotationally invariant set $AW(n,d)$ of normal plabic graphs we call {\em augmented $SL_3$ webs}, as they closely resemble Kuperberg's $SL_3$ webs with extra edges. This resolves a conjecture made in \cite{me}. Plabic graphs were first introduced by A. Postnikov \cite{Postnikov} in order to study the totally nonnegative Grassmanian; we use the combinatorial machinery developed for them to show that our indexing set has the correct enumeration.  To define our basis, we use a modification of proper edge colorings for $SL_3$ webs which we call {\em consistent labellings}.  Consistent labellings are closely related to the weblike subgraphs introduced by T. Lam in order to define $SL_3$ web immanants and later used by C. Fraser, T. Lam, and I. Le to introduce a higher rank version of Postnikov's boundary measurement map \cite{Lam, FLL}, and we make this connection explicit.

Through consideration of the combinatorics of conistent labellings, we obtain skein relations for augmented $SL_3$ webs. These skein relations give a combinatorial description of the action of an adjacent transposition on an augmented web. The first skein relation, the crossing reduction rule shown below, shows how to expand the application of an adjacent transposition to an augmented web in the augmented web basis. The gray region represents an unknown number of edges connecting to other  vertices of the web not depicted.
\begin{center}
\begin{tikzpicture}[scale = .5]
\draw (-1,0)--(3,0);
\draw[thick] (0,0)--(0,-4);
\draw[fill = gray] (0,-4)--(-1,-4) arc (-180:-90:1);
\draw[thick] (2,0)--(2,-4);
\draw[fill = gray] (2,-4)--(3,-4) arc (0:-90:1);
\node at (0,.25) {$i$};
\node at (2,.25) {$i+1$};
\node at (3.5, -2) {$=$};
\node at (-1,-2) {$s_i \cdot$};
\draw[fill=black] (0,-4) circle (4pt);
\draw[fill=black] (2,-4) circle (4pt);
\end{tikzpicture}
\begin{tikzpicture}[scale = .5]
\draw (-1,0)--(3,0);
\draw[thick] (0,0)--(0,-4);
\draw[fill = gray] (0,-4)--(-1,-4) arc (-180:-90:1);
\draw[thick] (2,0)--(2,-4);
\draw[fill = gray] (2,-4)--(3,-4) arc (0:-90:1);
\node at (0,.4) {$i$};
\node at (2,.4) {$i+1$};
\node at (3.5, -2) {$-$};
\draw[fill=black] (0,-4) circle (4pt);
\draw[fill=black] (2,-4) circle (4pt);
\end{tikzpicture}
\begin{tikzpicture}[scale = .5]
\draw (-1,0)--(3,0);
\draw[thick] (0,0)--(1,-1)--(1,-3)--(0,-4);
\draw[thick, ->] (1,-1)--(1,-2);
\draw[fill = gray] (0,-4)--(-1,-4) arc (-180:-90:1);
\draw[thick] (2,0)--(1,-1)--(1,-3)--(2,-4);
\draw[fill = gray] (2,-4)--(3,-4) arc (0:-90:1);
\node at (0,.4) {$i$};
\node at (2,.4) {$i+1$};
\node at (3.5, -2) {$-\frac{1}{2}$};
\draw[fill=black] (0,-4) circle (4pt);
\draw[fill=black] (2,-4) circle (4pt);
\draw[fill=white] (1,-3) circle (4pt);
\draw[fill=black] (1,-1) circle (4pt);
\end{tikzpicture}
\begin{tikzpicture}[scale = .5]
\draw (-1,0)--(3,0);
\draw[thick] (0,0)--(0,-4);
\draw[fill = gray] (0,-4)--(-1,-4) arc (-180:-90:1);
\draw[thick] (2,0)--(0,-4);
\draw[fill = gray] (2,-4)--(3,-4) arc (0:-90:1);
\node at (0,.4) {$i$};
\node at (2,.4) {$i+1$};
\node at (3.5, -2) {$-\frac{1}{2}$};
\draw[fill=black] (0,-4) circle (4pt);
\draw[fill=black] (2,-4) circle (4pt);
\end{tikzpicture}
\begin{tikzpicture}[scale = .5]
\draw (-1,0)--(3,0);
\draw[thick] (0,0)--(2,-4);
\draw[fill = gray] (0,-4)--(-1,-4) arc (-180:-90:1);
\draw[thick] (2,0)--(2,-4);
\draw[fill = gray] (2,-4)--(3,-4) arc (0:-90:1);
\node at (0,.4) {$i$};
\node at (2,.4) {$i+1$};
\draw[fill=black] (0,-4) circle (4pt);
\draw[fill=black] (2,-4) circle (4pt);
\end{tikzpicture}
\end{center}
Note that not all terms on the right hand side of this relation are necessarily augmented webs, as black vertices of degree less than 3 or faces of degree 4 may be created. The remaining skein relations, which can be found in Section 6, show how to expand such terms when they arise. 

One application of our rotationally invariant basis in the case $r=3$ is that it gives us a cyclic sieving result on the indexing set. Let $X_{n,d}(q)$ denote the $q$-analog of the hook length formula for $\lambda = (d^3, 1^{n-3d})$, 
\[
X_{n,d}(q) := q^{3(d-1)+\binom{n-3(d-1)}{2}}\frac{[n]!_q}{\prod_{(i,j) \in \lambda} [h_{ij}]_q}
\]
where $h_{ij}$ denotes the hook length of a box $(i,j)$ in the Young diagram for $\lambda$. We show that the triple $(AW(n,d), C, X_{n,d}(q))$ exhibits the cyclic sieving phenomenon when $n$ is odd, and a signed version of cyclic sieving holds when $n$ is even. Specializing to the case $n=3d$ recovers a cyclic sieving result on $SL_3$ webs studied by T.K. Petersen, Pylyasky, and Rhoades in \cite{PPR}.

The rest of the paper is organized as follows. In Section 2, we review necessary background information. In Section 3 we give a definition of $r$-weakly noncrossing set partitions and give a basis of $S^{(d^r, 1^{n-rd})}$ which extends the jellyfish invariant basis. In Section 4, we define {\em augmented webs} as a certain subset of normal plabic graphs and give a combinatorial bijection between them and $3$-weakly noncrossing set partitions. This bijection will draw on ideas developed by J. Tymoczko and H. Russell to give a bijection between $SL_3$ webs and objects called {\em $m$-diagrams}, a special case of our 3-weakly noncrossing set partitions \cite{Russell, Tymoczko}. In Section 5, we define an $SL_3$-invariant polynomial attached to each normal plabic graph. We show that this definition extends jellyfish invariants and that the set of invariants attached to augmented webs satisfy properties (3) and (4) of a web basis. In Section 6, we show that skein relations hold for our plabic graph invariants. We use these skein relations to show that augmented web invariants are indeed a basis for $S^{(d^3, 1^{n-3d})}$. In Section 7, we show that our augmented web invariants can be interpreted in terms of weblike subgraphs. In Section 8, we discuss the cyclic sieving result for augmented webs which arises from our rotationally invariant basis. In Section 9, we discuss some possible future directions for this work.

\section{Background}

\subsection{Specht modules}

The irreducible representations of the symmetric group $\mathfrak{S}_n$ are indexed by integer partitions of $n$. The irreducible indexed by a partition $\lambda \vdash n$ is denoted $S^{\lambda}$ and is called the {\em Specht module} of shape $\lambda$. We give a construction of $S^{\lambda}$ following the approach of \cite{FPPS}. See \cite{Sagan} for a more detailed treatment of Specht modules.

Let $\lambda \vdash n$, and let $\lambda'$ be its transpose. Consider a matrix of $n\lambda_1'$ variables,
\[
M = \begin{bmatrix}
x_{1,1} & x_{1,2} & \cdots & x_{1,n}\\
x_{2,1} & x_{2,2} & \cdots & x_{2,n}\\
\vdots &  &\ddots& \vdots\\
x_{\lambda_1', 1} &x_{\lambda_1',2} & \cdots &x_{\lambda_1', n}
\end{bmatrix}
\]
The symmetric group $\mathfrak{S}_n$ acts on this matrix, and thus on $\mathbb{C}[M]$, by permuting columns of $M$. Let $\pi = \{\pi_1, \pi_2, \dots, \pi_{\lambda_1}\}$ denote a set partition of $n$ with shape $\lambda'$, i.e. the sizes of each part of the partition are given by the rows of $\lambda'$. For each such set partition $\pi$, let $p_\pi$ be the polynomial
\[
p_\pi = \prod_{i=1}^{\lambda_1} M_{[\lambda_i']}^{ \pi_i}
\]
where $M_{[\lambda_i']}^{ \pi_i}$ denotes the matrix minor of $M$ whose rows are indexed by $[\lambda_i']$ and columns are indexed by $\pi_i$.

Then the {\em Specht module} $S^\lambda$ is the span of these polynomials as $\pi$ ranges over all set partitions of shape $\lambda$.

\subsection{Jellyfish invariants}

Jellyfish invariants were introduced in \cite{PPS} and further developed in \cite{FPPS} in order to study the Specht module $S^{(d^r, 1^{n-rd})}$. An $(n,d,r)$-jellyfish invariant is a certain element of $S^{(d^r, 1^{n-rd})}$ attached to each ordered set partition of $[n]$ with $d$ blocks and all blocks at least size $r$. We include the basic definitions from \cite{FPPS} below, see their paper for examples and further exposition.

An {\em ordered set partition} of $n$ is a set partition with a total order on its blocks. Two blocks $A,B$ of an ordered set partition {\em cross} if there exist $a_1,a_2 \in A$ and $b_1,b_2 \in B$ such that $a_1 < b_1 < a_2< b_2$ or $b_1 < a_1 < b_2< a_2$. An ordered set partition is {\em noncrossing} if no two of its blocks cross. Let $\mathcal{OP}(n,d,r)$ denote the set of all ordered set partitions with exactly $d$ blocks and blocks of size at least $r$, and let $\mathcal{NCOP}(n,d,r)$ denote the set of all such partitions which are also noncrossing.

\begin{definition}
Let $\pi = \{\pi_1, \dots, \pi_d\} \in \mathcal{OP}(n,d,r)$ be an ordered set partition. Define the set of {\em $r$-jellyfish tableaux}, $\mathcal{J}_r(\pi)$ to be the set of generalized tableau $T$ with $d$ columns and $n-(d-1)r$ rows with the following constriants:
\begin{enumerate}
\item $T_{ij} \in [n]$ \textrm{ or } $T_{ij}$ \textrm{ is nonempty}
\item If $i\in [r], T_{ij}$ is nonempty.
\item If $i>r$, there exists exactly one $j$ such that $T_{ij}$ is nonempty
\item The nonempty entries in column $j$ are exactly the elements of $\pi_j$ in increasing order.
\end{enumerate}

For each $T \in \mathcal{J}_r(\pi)$, define a polynomial 
\[
J(T) = \prod_{i=1}^d M_{R_i(T)}^{\pi_i}
\]
where $R_i(T)$ is the set of rows containing an entry in $\pi_i$.

For each $\pi \in \mathcal{OP}(n,d,r)$, the {\em $r$-jellyfish invariant}, denoted $[\pi]_r$ is
\[
[\pi]_r = \sum_{T \in \mathcal{J}_r(\pi)} \textrm{sign}(T)J(T)
\]
where $\textrm{sign}(T)$ denotes the sign of the reading word of $T$.
\end{definition}

Fraser, Patrias, Pechenik, and Striker prove the following about $r$-jellyfish invariants:

\begin{theorem}[{{\cite[Theorem 4.24]{FPPS}}}]
\label{inspecht}
For each ordered set partition $\pi \in \mathcal{OP}(n,d,r)$, the invariant $[\pi]_r$ lies in the flamingo Specht module $S^{(d^r, 1^{n-rd})}$. 
\end{theorem}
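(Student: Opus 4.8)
The plan is to realize the flamingo Specht module inside a tensor power and characterize it as a space of highest weight vectors for a general linear Lie algebra, then verify that each jellyfish invariant meets that characterization. Set $N := \lambda_1' = n-(d-1)r$, so that $M$ is an $N\times n$ matrix and every tableau in $\mathcal{J}_r(\pi)$ has $N$ rows. I would restrict attention to the multilinear part of $\mathbb{C}[M]$, the span of monomials of degree one in the variables of each column; since every $p_\sigma$ and every $J(T)$ is a product of minors in which each column of $M$ appears in exactly one factor, all of them lie here. As a module for $GL_N\times\mathfrak{S}_n$ (with $GL_N$ acting on rows and $\mathfrak{S}_n$ permuting columns) this multilinear space is identified with $(\mathbb{C}^N)^{\otimes n}$, the $c$-th tensor factor recording column $c$. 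Under this identification a minor $M^B_R$ corresponds to the decomposable alternating tensor $\bigwedge_{i\in R} e_i$ placed in the slots indexed by the column set $B$, so that $p_\sigma \leftrightarrow \bigotimes_j \bigl(\bigwedge_{i\in[\lambda_j']} e_i\bigr)$ and $[\pi]_r \leftrightarrow \sum_{T}\mathrm{sign}(T)\,\bigotimes_j \bigl(\bigwedge_{i\in R_j(T)} e_i\bigr)$.

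By Schur--Weyl duality $(\mathbb{C}^N)^{\otimes n}\cong\bigoplus_\mu W^\mu_N\otimes S^\mu$, and for each dominant $\mu$ with $\ell(\mu)\le N$ the space of $\mathfrak{gl}_N$-highest weight vectors of weight $\mu$ is an irreducible $\mathfrak{S}_n$-submodule isomorphic to $S^\mu$. First I would check that the generators $p_\sigma$ are highest weight vectors of weight $\lambda$: the size-$N$ block contributes the top wedge $e_1\wedge\cdots\wedge e_N$, the determinant representation, killed by every raising operator $E_{ab}$ with $a<b$; each size-$r$ block contributes $e_1\wedge\cdots\wedge e_r$, likewise killed since $e_a$ is already present when $b\le r$ and $e_b$ is absent when $b>r$. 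Adding row multiplicities gives weight $(d^r,1^{N-r})=\lambda$ because $N-r=n-rd$. Since $S^\lambda=\mathrm{span}\{p_\sigma\}$ is then a nonzero $\mathfrak{S}_n$-submodule of the irreducible weight-$\lambda$ highest weight space, the two coincide, and it suffices to prove that $[\pi]_r$ is itself a highest weight vector of weight $\lambda$.

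That $[\pi]_r$ has weight $\lambda$ is immediate: constraints (2)--(3) force $[r]\subseteq R_j(T)$ for every block while distributing the rows $r+1,\dots,N$ one-to-one among the blocks, so each summand records row $i$ with multiplicity $d$ for $i\le r$ and multiplicity $1$ for $i>r$. For the raising operators, writing $R_j(T)=[r]\sqcup S_j$, I would split into cases. If $b\le r$, or if $a\le r<b$, then $a\le r$ forces $e_a$ into every block's wedge, so $E_{ab}$ annihilates each summand individually. The only surviving case is $r<a<b\le N$, where $E_{ab}$ acts solely on the ``tentacle'' wedges $e_{S_j}$. Here the key observation is that the tentacle part of $[\pi]_r$ is exactly the image of the top wedge $e_{r+1}\wedge\cdots\wedge e_N$ of $\mathrm{span}(e_{r+1},\dots,e_N)\cong\mathbb{C}^{N-r}$ under the $d$-fold comultiplication of the exterior algebra that distributes it across the tentacle slots; since this comultiplication is $GL_{N-r}$-equivariant and the top wedge is the determinant representation, its image is killed by every $E_{ab}$ with $r<a<b$. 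Combining the cases gives $E_{ab}[\pi]_r=0$ for all $a<b$, so $[\pi]_r\in S^\lambda$.

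The main obstacle lies in the last step: I must show that the signs $\mathrm{sign}(T)$ coming from the reading words of jellyfish tableaux agree, up to one global scalar, with the Koszul signs produced by the exterior comultiplication, so that the tentacle part of $[\pi]_r$ genuinely is the equivariant image claimed and not some other signed sum. Equivalently, one can bypass the coproduct language and prove $E_{ab}[\pi]_r=0$ by hand via a fixed-point-free, sign-reversing involution $T\leftrightarrow T'$ that swaps the blocks containing rows $a$ and $b$: I showed above that $T$ and $T'$ produce identical tensors after applying $E_{ab}$, and the content is the verification that the reading-word signs flip under this involution. Checking this sign identity is the one genuinely computational point of the argument; everything else is formal.
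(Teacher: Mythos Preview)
This theorem is not proved in the present paper; it is quoted from \cite{FPPS} as background, with no argument supplied. So there is no in-paper proof to compare your proposal against, and the interesting question is simply whether your argument is correct.

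It is. The identification of the multilinear part of $\mathbb{C}[M]$ with $(\mathbb{C}^N)^{\otimes n}$ and the characterization of $S^\lambda$ as the $\mathfrak{gl}_N$-highest weight space of weight $\lambda$ via Schur--Weyl duality are standard and go through exactly as you describe. The case split on $E_{ab}$ is accurate: when $a\le r$ every factor $e_{R_j}$ already contains $e_a$, so only the range $r<a<b\le N$ requires work.

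The sign check you flag as the ``main obstacle'' is indeed the crux, but it is short. A jellyfish tableau $T$ is determined by the ordered set partition $(S_1,\dots,S_d)$ of $\{r{+}1,\dots,N\}$ recording which column each tentacle row occupies; constraint (4) then forces all the entries. If $T_0$ denotes the tableau whose $S_j^{(0)}$ are consecutive blocks of rows, the reading word of a general $T$ is obtained from that of $T_0$ by permuting the tentacle positions via the shuffle $\tau$ carrying $(S_1,\dots,S_d)$ to $(S_1^{(0)},\dots,S_d^{(0)})$. Because the \emph{set} of entries in the tentacle rows is the same for every $T$, inversions between the top $r$ rows and the tentacle rows are constant, and hence $\mathrm{sign}(T)=\mathrm{sign}(T_0)\cdot\epsilon(S_1,\dots,S_d)$ with $\epsilon$ the shuffle sign. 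This is precisely the Koszul sign of exterior comultiplication, so your coproduct argument goes through verbatim.

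One small correction to your involution paragraph: $T$ and $T'$ do not literally produce identical tensors after applying $E_{ab}$. Acting by $E_{ab}$ on a minor $M^{\pi_k}_{R_k}$ replaces row $b$ by row $a$ and then re-sorts, contributing a sign $(-1)^{|\{i\in R_k:a<i<b\}|}$, and this exponent differs between $T$ and $T'$. The involution still works once you also track the change in $\epsilon(S_1,\dots,S_d)$ under the swap, but the comultiplication argument avoids that bookkeeping and is the cleaner of your two routes.
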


\begin{theorem}[{{\cite[Proposition 5.11]{FPPS}}}]
\label{perminvariancejelly}
For any ordered set partition $\pi \in \mathcal{OP}(n,d,r)$ and any permutation $w \in \mathfrak{S}_n$, we have
\[
w \cdot [\pi]_r = \textrm{sign}(w)[w\cdot \pi]_r
\]
\end{theorem}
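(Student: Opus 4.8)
The plan is to expand the invariant over jellyfish tableaux and track what the column-permutation action does to each summand. Writing $[\pi]_r = \sum_{T \in \mathcal{J}_r(\pi)} \textrm{sign}(T)\, J(T)$ with $J(T) = \prod_{i=1}^d M_{R_i(T)}^{\pi_i}$, I want to match the terms of $w\cdot[\pi]_r$ with those of $\textrm{sign}(w)[w\cdot\pi]_r$ via an explicit bijection on tableaux. Since $\textrm{sign}$ is a homomorphism and the $\mathfrak{S}_n$-action is a genuine left action, it suffices to prove the identity, for all $\pi$, in the case $w = s_k$ is an adjacent transposition: for a product one then gets $(vw)\cdot[\pi]_r = v\cdot(w\cdot[\pi]_r) = \textrm{sign}(w)\, v\cdot[w\cdot\pi]_r = \textrm{sign}(vw)[vw\cdot\pi]_r$, and the $s_k$ generate $\mathfrak{S}_n$. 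So I would focus on $w = s_k$.

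The bijection $T \mapsto T'$ I would use sends a tableau to the tableau of the same shape obtained by applying $w$ to every entry and re-sorting each column into increasing order; because $|w(\pi_i)| = |\pi_i|$ this is a shape-preserving bijection $\mathcal{J}_r(\pi) \to \mathcal{J}_r(w\cdot\pi)$. Consider first the case that $k$ and $k+1$ lie in the same block $\pi_i$. Then $w\cdot\pi = \pi$, and inside $J(T)$ only the minor $M_{R_i(T)}^{\pi_i}$ involves both columns $k$ and $k+1$; swapping these two columns negates that determinant and leaves the other factors alone, so $w\cdot J(T) = -J(T)$. Summing over $T$ gives $w\cdot[\pi]_r = -[\pi]_r = \textrm{sign}(w)[w\cdot\pi]_r$, as desired.

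The substantive case is $k \in \pi_i$, $k+1 \in \pi_{i'}$ with $i \neq i'$. Here the key elementary observation is that, since $k$ and $k+1$ are consecutive integers, replacing $k$ by $k+1$ in a sorted block (or vice versa) does not change the relative order of its elements; hence no re-sorting is needed, and each of the two affected minors merely has a single column relabelled, giving $w\cdot J(T) = J(T')$ with no extra sign. It then remains to compare $\textrm{sign}(T)$ and $\textrm{sign}(T')$. Since $T'$ is just $T$ with the single occurrences of the values $k$ and $k+1$ interchanged, the reading word of $T'$ differs from that of $T$ by the transposition $(k,k+1)$ acting on values, so $\textrm{sign}(T') = -\,\textrm{sign}(T)$. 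Therefore $w\cdot[\pi]_r = \sum_T \textrm{sign}(T)\, J(T') = -\sum_{T'} \textrm{sign}(T')\, J(T') = \textrm{sign}(w)[w\cdot\pi]_r$, completing this case.

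For a direct treatment of general $w$ the same scheme applies, with the two clean sign changes above replaced by the single identity $\textrm{sign}(T')\,\textrm{sign}(T) = \textrm{sign}(w)\prod_i \epsilon_i$, where $\epsilon_i$ is the sign of the permutation that sorts $w(\pi_i)$ and records the column reordering in $w\cdot M_{R_i(T)}^{\pi_i} = \epsilon_i\, M_{R_i(T)}^{w(\pi_i)}$. This identity would follow by writing the reading words of $T$ and $T'$ as value-permutations $\rho_T,\rho_{T'}$ of $[n]$ and checking that $\rho_{T'} = w \circ \rho_T \circ \Theta$, where $\Theta$ permutes the reading positions within each column by the relevant inverse sorting permutation, so that $\textrm{sign}(\Theta) = \prod_i \epsilon_i$. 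I expect the main obstacle to be exactly this bookkeeping: reconciling the single reading-word sign with the per-block column-sorting signs, and confirming that the interleaving of different columns along the reading order is irrelevant, which holds because $\Theta$ is a product of permutations supported on the disjoint position-sets of the columns. Reducing to adjacent transpositions as above is what sidesteps most of this bookkeeping, so that is how I would organize the final argument.
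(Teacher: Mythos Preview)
The paper does not supply its own proof of this statement; it is quoted as \cite[Proposition 5.11]{FPPS} and used as a black box. So there is nothing in the present paper to compare against.

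Your argument is correct. The reduction to adjacent transpositions is legitimate precisely because you prove the identity for \emph{all} $\pi$ at once, so the inductive step $(vw)\cdot[\pi]_r = v\cdot\bigl(\mathrm{sign}(w)[w\cdot\pi]_r\bigr) = \mathrm{sign}(vw)[vw\cdot\pi]_r$ goes through. In the same-block case the single affected minor picks up a sign from the column swap while $s_k\cdot\pi=\pi$, and in the different-block case your key observation---that replacing $k$ by $k+1$ (and vice versa) in blocks not containing the other preserves the sorted order---is exactly what makes $R_j(T')=R_j(T)$ and $s_k\cdot J(T)=J(T')$ hold without any column-reordering sign. The reading-word sign flips because $T'$ is obtained from $T$ by swapping the values $k$ and $k+1$, i.e.\ composing the reading permutation with a transposition on the left. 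Your sketch of the general-$w$ bookkeeping via $\rho_{T'}=w\circ\rho_T\circ\Theta$ with $\Theta$ block-supported is also sound, and you are right that the adjacent-transposition route is the cleaner way to package it.
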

Note that this implies the span of jellyfish invariants is closed under the action of $\mathfrak{S}_n$, and must therefore be equal to $S^{(d^r, 1^{n-rd})}$.

\begin{theorem}[{{\cite[Theorem 5.13]{FPPS}}}]
\label{linindjelly}
For each noncrossing set partition $\gamma \in \mathcal{NC}(n,d,r)$, order the blocks in any way to create a corresponding ordered set partition $\pi_\gamma$. Then the set $\{[\pi_\gamma]_r : \gamma \in \mathcal{NC}(n,d,r)\}$ is linearly independent.
\end{theorem}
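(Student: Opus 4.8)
My plan is to prove linear independence by a \emph{leading-term} (unitriangularity) argument carried out directly in the polynomial ring $\mathbb{C}[M]$. I would fix a monomial term order $\prec$ on the monomials in the variables $x_{ij}$, and for each $\gamma \in \mathcal{NC}(n,d,r)$ isolate a distinguished monomial $m_\gamma = \mathrm{LM}([\pi_\gamma]_r)$, the $\prec$-largest monomial occurring with nonzero coefficient in the polynomial $[\pi_\gamma]_r$. Once this is done the conclusion is formal: given a relation $\sum_\gamma c_\gamma [\pi_\gamma]_r = 0$, choose $\gamma^*$ with $c_{\gamma^*} \neq 0$ maximizing $m_{\gamma^*}$ under $\prec$; provided the $m_\gamma$ are pairwise distinct, the monomial $m_{\gamma^*}$ occurs in no other summand (every monomial of $[\pi_\gamma]_r$ is $\preceq m_\gamma \prec m_{\gamma^*}$ for $\gamma \neq \gamma^*$) and occurs in $[\pi_{\gamma^*}]_r$ only as its leading term, so the coefficient of $m_{\gamma^*}$ in the sum is a nonzero multiple of $c_{\gamma^*}$, forcing $c_{\gamma^*} = 0$, a contradiction. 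Thus the two things to establish are (i) each $[\pi_\gamma]_r$ has a well-defined leading monomial, i.e. no cancellation destroys the top term, and (ii) the map $\gamma \mapsto m_\gamma$ is injective on $\mathcal{NC}(n,d,r)$.

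To set up (i), I would analyze the monomials of $[\pi_\gamma]_r = \sum_T \mathrm{sign}(T)\prod_i M_{R_i(T)}^{\pi_i}$. Since the column blocks $\pi_1,\dots,\pi_d$ partition $[n]$, every monomial has the form $\prod_{c=1}^n x_{\phi(c),c}$, where the assignment $\phi$ of a row to each column is constrained by: rows $1,\dots,r$ are used once in each block's minor, each lower row $>r$ is used by exactly one block (this is the data of the tableau $T$), and within each block $\phi$ restricts to a bijection onto that block's row set (this is a term of the corresponding determinant). I would choose $\prec$ so that the dominant contribution comes from the \emph{sorted} tableau together with the main-diagonal term of each minor: $T$ distributes the lower rows so that the extra elements of the blocks are placed to respect the noncrossing nesting order of $\gamma$, and each minor contributes the product of its diagonal entries, yielding $m_\gamma = \prod_c x_{\phi_\gamma(c),c}$ for this canonical $\phi_\gamma$.

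For injectivity (ii), I would show that the canonical $\phi_\gamma$ lets one read $\gamma$ off from $m_\gamma$: the lower rows $>r$ to which columns are sent, read in the order dictated by $\prec$, recover which block each column belongs to, and the noncrossing hypothesis guarantees that the reconstructed assignment of columns to blocks is the unique noncrossing partition consistent with this data. In particular distinct noncrossing partitions yield distinct leading monomials. This is exactly the step where noncrossingness is indispensable: for crossing partitions the optimal tableaux can coincide or their top terms can cancel against one another, which is why the statement is restricted to $\mathcal{NC}(n,d,r)$ and gives only independence (the count $|\mathcal{NC}(n,d,r)|$ is strictly below $\dim S^{(d^r,1^{n-rd})}$ when $r>2$).

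The main obstacle is establishing (i) rigorously, that is, proving that the canonical tableau-plus-diagonal term strictly dominates every other contribution and that no two contributions produce $m_\gamma$ with cancelling signs. This requires designing the term order carefully and controlling the interaction between the outer sum over jellyfish tableaux $T$ and the inner determinant expansions; the delicate point is the sign bookkeeping, pitting $\mathrm{sign}(T)$ against the permutation sign inside each determinant, and it is the noncrossing condition that pins down a single surviving top term. Once (i) and (ii) are in hand, the unitriangularity argument above delivers the linear independence of $\{[\pi_\gamma]_r : \gamma \in \mathcal{NC}(n,d,r)\}$ immediately, with no appeal needed beyond the definition of $[\pi_\gamma]_r$ and the fact (Theorem~\ref{inspecht}) that these elements lie in $S^{(d^r,1^{n-rd})}$.
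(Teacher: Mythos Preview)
Your overall plan, a leading-monomial argument under a well-chosen term order, is exactly the approach the paper uses. The paper itself cites this statement from \cite{FPPS} without proof, but it proves the strictly stronger Theorem~\ref{basis} by just this method (and since every noncrossing partition is in particular $r$-weakly noncrossing, that proof subsumes the present statement).

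The gap in your proposal is precisely the part you flag as ``the main obstacle'': you never actually specify the order, and your guess for how injectivity works is pointed at the wrong rows. The paper orders the variables $x_{i,j}$ first by row index $i$, and within a row by column index $j$ increasing, \emph{except} that row $r$ is read with $j$ decreasing; it then takes the lexicographic monomial order. Under this order the leading term of each $J(T)$ is the pure diagonal product of every minor, which places the $i$-th smallest element of each block in row $i$ for $i<r$ and the \emph{largest} element of each block in row $r$. Different jellyfish tableaux $T$ for the same $\gamma$ differ only in which entry occupies each row $>r$, so their leading monomials are pairwise distinct and no cancellation occurs; your worry about signs dissolves. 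The injectivity step then reads off from the leading monomial the set $\{c:\phi_\gamma(c)=i\}$ for each $i\le r$, i.e.\ the set of $i$-th smallest block elements for $i<r$ together with the set of block maxima, and Corollary~\ref{setsunique} says these $r$ sets already determine a weakly noncrossing $\gamma$ uniquely. This is data coming from rows $1,\dots,r$, not from the tail rows $>r$ as you suggest; the tail rows vary with $T$ and do not by themselves encode block membership. So your outline is right, but the two concrete ingredients you still need are the reversal of row $r$ in the variable order (without it the leading terms fail to be distinct) and the realization that reconstruction uses the first $r$ rows rather than the tail.
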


 Fraser, Patrias, Pechenik, and Striker thus give a spanning set of $S^{(d^r, 1^{n-rd})}$ indexed by all set partitions, and a linearly independent subset indexed by noncrossing set partitions. Thus, it is possible to choose a subset $S$ of set partitions such that $S$ indexes a basis and $S$ contains all noncrossing set partitions. We will show how to do so in Section 3.

\subsection{Noncrossing Tableaux}

Noncrossing tableaux were introduced by P. Pylyavskyy in \cite{Pylyavsky} to give a non-crossing counterpart to standard Young tableaux. Formally, noncrossing tableaux are set partitions; Pylyavskyy chose the name noncrossing tableaux to distinguish them from the more standard definition of noncrossing set partitions given in the previous subsection. As we will be using noncrossing tableaux in the context of set partitions, we will instead refer to these as {\em weakly} noncrossing set partitions. We will use a modification of this weaker condition to interpolate between strongly noncrossing set partitions and all set partitions.

\begin{definition}
Let $A = \{a_1 <a_2 < \cdots < a_{|A|}\}$ and $B= \{b_1 <b_2 < \cdots < b_{|B|}\}$ be two disjoint subsets of $[n]$ with $|A| \leq |B|$. We say $A$ and $B$ are {\em weakly noncrossing} if for all $1\leq i \leq |A|-1$ we do not have
\[
a_i < b_i < a_{i+1} < b_{i+1}
\]
or
\[
b_i < a_i < b_{i+1} < a_{i+1}
\]
and additionally, if $|A| <|B|$, we do not have
\[
b_{|A|} < a_{|A|} < b_{|B|}
\]

A set partition $\pi$ is {\em weakly noncrossing} if blocks of $\pi$ are pairwise weakly noncrossing.
\end{definition}

Pylyavskyy showed that weakly noncrossing set partitions of shape $\lambda \vdash n$ are in bijection with standard Young tableaux of shape $\lambda$.

\subsection{Plabic graphs}

Plabic graphs were introduced by Postnikov in order to study the totally nonnegative Grassmanian. A textbook treatment can be found in \cite{FWZ}. We will only need combinatorial results about plabic graphs, which we list here.

A {\em plabic graph} $G$ is a planar graph embedded in a disk, possibly with loops and multiple edges between vertices, with interior vertices colored black and white and boundary vertices labelled clockwise 1 through $n$. A {\em normal} plabic graph is a plabic graph for which white vertices are degree three, boundary vertices only connect to black vertices, and same colored vertices do not share an edge. For this paper, we consider only normal plabic graphs and state results only as they apply to normal plabic graphs, rather than including the full generality.

Given a normal plabic graph $G$ , the {\em trip} at  $i$ is the walk in $G$ starting at boundary vertex $i$ which turns right at every black vertex and left at every white vertex until it reaches the boundary at avertex we denote $\textrm{trip}(i)$. The function defined by $i \mapsto \textrm{trip}(i)$ is a permutation of $[n]$ and is called the {\em trip permutation} of $G$. The {\em exceedances} of $G$ are the exceedances of this permutation, i.e. those trips for which $\textrm{trip}(i) > i$.

Two normal plabic graphs are {\em normal move equivalent} if one can be obtained from the other via a seuquence of {\em normal urban renewal moves} and {\em normal flip moves}, which we now define. The normal urban renewal move is the move shown below, where filled in arcs represent any number of edges leading elsewhere in the graph
\begin{center}
\begin{tikzpicture}[scale = .27]
\draw[thick] (0,0)--(0,-2)--(-3,-3)--(0,-4)--(0,-6);
\draw[thick] (0,0)--(0,-2)--(3,-3)--(0,-4)--(0,-6);
\draw[fill = gray] (-3,-3)--(-3.71,-2.29) arc (135:225:1);
\draw[fill = gray] (3,-3)--(3.71,-2.29) arc (45:-45:1);
\draw[fill = gray] (0,0)--(-.71,.71) arc (135:45:1);
\draw[fill = gray] (0,-6)--(-.71,-6.71) arc (225:315:1);
\draw[fill = black] (0,0) circle (6pt);
\draw[fill = white] (0,-2) circle (6pt);
\draw[fill = black] (-3,-3) circle (6pt);
\draw[fill = black] (3,-3) circle (6pt);
\draw[fill = white] (0,-4) circle (6pt);
\draw[fill = black] (0,-6) circle (6pt);

\node at (5.5,-3) {$\leftrightarrow$};
\end{tikzpicture}
\begin{tikzpicture}[scale = .27]
\draw[thick] (-3,-3)--(-1,-3)--(0,-6)--(1,-3)--(3,-3);
\draw[thick] (-3,-3)--(-1,-3)--(0,0)--(1,-3)--(3,-3);
\draw[fill = gray] (-3,-3)--(-3.71,-2.29) arc (135:225:1);
\draw[fill = gray] (3,-3)--(3.71,-2.29) arc (45:-45:1);
\draw[fill = gray] (0,0)--(-.71,.71) arc (135:45:1);
\draw[fill = gray] (0,-6)--(-.71,-6.71) arc (225:315:1);
\draw[fill = black] (0,0) circle (6pt);
\draw[fill = white] (-1,-3) circle (6pt);
\draw[fill = black] (-3,-3) circle (6pt);
\draw[fill = black] (3,-3) circle (6pt);
\draw[fill = white] (1,-3) circle (6pt);
\draw[fill = black] (0,-6) circle (6pt);
\end{tikzpicture}
\end{center}
The normal flip move is 

\begin{center}
\begin{tikzpicture}[scale = .27]
\draw[thick] (0,0)--(1,-2)--(0,-3)--(-1,-4)--(0,-6);
\draw[thick] (3,-3)--(1,-2)--(0,-3)--(-1,-4)--(-3,-3);
\draw[fill = gray] (-3,-3)--(-3.71,-2.29) arc (135:225:1);
\draw[fill = gray] (3,-3)--(3.71,-2.29) arc (45:-45:1);
\draw[fill = gray] (0,0)--(-.71,.71) arc (135:45:1);
\draw[fill = gray] (0,-6)--(-.71,-6.71) arc (225:315:1);
\draw[fill = black] (0,0) circle (6pt);
\draw[fill = white] (1,-2) circle (6pt);
\draw[fill = black] (-3,-3) circle (6pt);
\draw[fill = black] (0,-3) circle (6pt);
\draw[fill = black] (3,-3) circle (6pt);
\draw[fill = white] (-1,-4) circle (6pt);
\draw[fill = black] (0,-6) circle (6pt);

\node at (5.5,-3) {$\leftrightarrow$};
\end{tikzpicture}
\begin{tikzpicture}[scale = .27]
\draw[thick] (0,0)--(-1,-2)--(0,-3)--(1,-4)--(0,-6);
\draw[thick] (-3,-3)--(-1,-2)--(0,-3)--(1,-4)--(3,-3);
\draw[fill = gray] (-3,-3)--(-3.71,-2.29) arc (135:225:1);
\draw[fill = gray] (3,-3)--(3.71,-2.29) arc (45:-45:1);
\draw[fill = gray] (0,0)--(-.71,.71) arc (135:45:1);
\draw[fill = gray] (0,-6)--(-.71,-6.71) arc (225:315:1);
\draw[fill = black] (0,0) circle (6pt);
\draw[fill = white] (-1,-2) circle (6pt);
\draw[fill = black] (-3,-3) circle (6pt);
\draw[fill = black] (0,-3) circle (6pt);
\draw[fill = black] (3,-3) circle (6pt);
\draw[fill = white] (1,-4) circle (6pt);
\draw[fill = black] (0,-6) circle (6pt);
\end{tikzpicture}
\end{center}

A normal plabic graph is {\em reduced} if it is not normal move equivalent to any plabic graph which contains a {\em forbidden configuration}, i.e. a face of degree two or a leaf vertex not adjacent to the boundary.

A {\em bad feature} of a normal plabic graph $G$ is one of the following:
\begin{itemize}
\item A roundtrip: A cycle in $G$ which turns left at every white vertex and right at every black vertex.
\item An essential self-intersection: A trip in $G$ which passes through the same edge twice.
\item A bad double-crossing: Two trips in $G$ which both pass through edge $e_1$ then edge $e_2$ in that order.
\end{itemize}

\begin{theorem}[{{\cite[Theorem 7.8.6]{FWZ}}}]
A normal plabic graph is reduced if and only if it does not contain any bad features.
\end{theorem}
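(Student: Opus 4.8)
The plan is to pass to the \emph{strand diagram} of a normal plabic graph $G$, obtained by overlaying all of its trips as oriented curves in the disk. Each edge is then traversed by exactly two strand segments and each interior vertex resolves into a local pattern of turns, so that the three bad features become visible diagrammatically: a roundtrip is a closed strand, an essential self-intersection is a strand that crosses itself, and a bad double-crossing is a pair of strands crossing twice with matching orientation, i.e. bounding an oriented bigon. The theorem then splits into two implications, and the technical heart is a compatibility lemma describing how this diagram changes under the two normal moves.

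First I would prove the compatibility lemma: the normal urban renewal and flip moves act on the strand diagram by local reconfigurations of Reidemeister type. Inspecting the two local pictures and tracking each strand segment, urban renewal slides one strand across the crossing of two others (a Reidemeister-III-type move) while the flip move is a planar isotopy rerouting how the strands pass through a small region; crucially, neither move creates or destroys a closed strand, a self-crossing, or a matching-orientation double crossing. Consequently the presence of a bad feature is an invariant of normal move equivalence.

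With the lemma in hand, the implication ``no bad features $\Rightarrow$ reduced'' is the easy direction, which I would prove by contrapositive. If $G$ is not reduced, then some normal-move-equivalent $G'$ contains a forbidden configuration, and each forbidden configuration forces a bad feature in $G'$: at a non-boundary leaf the incident trip must return along its unique edge, giving an essential self-intersection (or a roundtrip if it closes up), while a degree-two face is bounded by two edges between a single pair of vertices whose two strands, after a short check of the possible vertex colorings at the two endpoints, either form a matching-orientation double crossing or close into a roundtrip. By the compatibility lemma, $G$ itself then has a bad feature, which is the contrapositive.

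The hard part will be the reverse implication, ``has a bad feature $\Rightarrow$ not reduced.'' Here I would take a bad feature that is innermost, in the sense that the region it bounds in the disk encloses as few crossings as possible, and use the moves to collapse it into a forbidden configuration. An innermost roundtrip or self-intersection loop bounds a disk into which no strand dips, so one can repeatedly apply flip and urban renewal moves to shrink it to a monogon or bigon, realized as a non-boundary leaf or a degree-two face; a bad double-crossing is treated by emptying its oriented bigon, pushing interior crossings outward with Reidemeister-III-type urban renewal moves until an empty bigon, i.e. a degree-two face, remains. The main obstacle is precisely this emptying step: one must show that a minimal enclosed obstruction always admits a legal move that strictly decreases the number of interior crossings without creating a new enclosed obstruction, and while preserving normality (trivalent white vertices and the proper two-coloring), so that the procedure terminates at a forbidden configuration. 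I expect this to require an induction on the enclosed crossing count together with a finite case analysis of the possible innermost local configurations.
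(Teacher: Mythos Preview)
The paper does not prove this theorem at all: it is quoted verbatim as background, with the citation \cite[Theorem 7.8.6]{FWZ}, and no proof is supplied. There is therefore nothing in the paper to compare your proposal against.

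That said, your outline is broadly the standard argument one finds in Postnikov's original work and in the Fomin--Williams--Zelevinsky exposition you are implicitly reconstructing: pass to the strand picture, check that the moves preserve the absence of bad features, and for the hard direction take a minimal bad feature and empty its interior via moves until a forbidden configuration appears. Your identification of the emptying step as the crux is accurate, and the inductive scheme on the number of enclosed crossings is indeed how it is done. One caution: your claim that the flip move is a mere planar isotopy on the strand diagram is not quite right---in the normal setting the flip move does change which pairs of strands cross (it is closer to a Reidemeister-III itself), so the invariance check requires more care than you indicate. If you intend to actually carry this out rather than cite it, you should redo the local strand analysis for both moves.
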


The more common use of this theorem is to test whether a plabic graph is reduced or not. The plabic graphs we are interested in, however, will be clearly reduced as their normal move equivalence class will have size 1. We will instead apply it to understand the structure of the trips of our plabic graphs.

\subsection{$SL_3$ Webs} $SL_3$ webs, or $A_2$ webs, were introduced by Kuperberg to study $SL_3$ invariant tensors and the representation theory of the quantum group $U_q(\mathfrak{sl}_3)$ \cite{Kuperberg}. The following is based on both Kuperberg's work as well as a nice discussion of the topic by S. Fomin and P. Pylyavskyy in \cite{FP}. A sign string of length $n$ is a string containing $n$ letters, all each $+$ or $-$, e.g. $(++--++-)$. Given a sign string $s = s_1s_2\cdots s_n$, an $SL_3$ web of type $s$ is a bipartite plabic graph with $n$ boundary vertices in which every interior vertex has degree 3 and boundary vertex $i$ is adjacent to a black vertex if $s_i = +$ and a white vertex if $s_i = -$. This is a slightly anachronistic version of the definition, as plabic graphs were defined after $SL_3$ webs, but the comparison will be useful for us later.

$SL_3$ webs have representation theoretic meaning. Let $V$ be the three-dimensional defining representation of $SL_3$, with basis $\{e_1, e_2, e_3\}$, and let $V^*$ denote its dual with dual basis $\{e_1^*, e_2^*, e_3^*\}$. An $SL_3$ web with sign string $(+++--++)$ e.g. represents an element of the space 
\[
(V \otimes V \otimes V \otimes V^* \otimes V^* \otimes V \otimes V)^{SL_3}
\]
of $SL_3$ invariant elements of $(V \otimes V \otimes V \otimes V^* \otimes V^* \otimes V \otimes V)$ where $V$ is the three-dimensional defining representation of $SL_3$, $+$ correspond to copies of $V$ and $-$ correspond to cpoies of $V^*$. 

The unique $SL_3$ web of sign string $(+++)$ 

\begin{center}
\begin{tikzpicture}
\equicc[1cm]{3}{0}{0}
\draw (N1)--(0,0)--(N2);
\draw (N3)--(0,0);
\draw[fill = black] (0,0) circle (4pt);
\end{tikzpicture}
\end{center}
represents the tensor
$\sum_{\sigma \in \mathfrak{S}_3} \textrm{sign}(\sigma) e_{\sigma(1)} \otimes e_{\sigma(2)} \otimes e_{\sigma(3)} $
and the unique $SL_3$ web of sign string $(---)$ 

\begin{center}
\begin{tikzpicture}
\equicc[1cm]{3}{0}{0}
\draw (N1)--(0,0)--(N2);
\draw (N3)--(0,0);
\draw[fill = white] (0,0) circle (4pt);
\end{tikzpicture}
\end{center}

represents the tensor $\sum_{\sigma \in \mathfrak{S}_3} \textrm{sign}(\sigma) e_{\sigma(1)}^* \otimes e_{\sigma(2)}^* \otimes e_{\sigma(3)}^*
$. Concatenation of webs represents tensor product, and an edge between vertices represents tensor contraction.

We can also give a purely combinatorial desciption of the invariant each web represents in terms of proper edge colorings. A {\em proper edge coloring} $\ell$ of an $SL_3$ web $W$ is a labelling of the edges by the numbers $1,2,3$ such that no label appears more than once around each vertex. For each labelling, we get a simple basis tensor $T_\ell$ by taking the basis vector or dual basis vector $e_j$ or $e_j^*$ (depending on th sign string) at boundary vertex $i$ whose incident edge has label $j$, and a sign $\textrm{sign}(\ell)$ given by $(-1)^{cc(\ell)}$, where $cc(\ell)$ denotes the number of interior vertices for which $1,2,3$ appear in counterclockwise order in the labelling $\ell$ . The $SL_3$ invariant associated to $W$, which we denote $[W]_{SL_3}$ is
\[
[W]_{SL_3} = \sum_{\textrm{proper labellings } \ell} \textrm{sign}(\ell)T_\ell 
\]

A web is called {\em nonelliptic} if it contains no faces of degree 4 or less. The invariants for the set of all noneeliptic webs form a basis for the space of $SL_3$ invariant tensors.

\subsection{Cyclic sieving}

The cyclic sieving phenomenon was introduced by V. Reiner, D. Stanton, and D. White in order to unify a number of enumerative results in combinatorics \cite{RSW}. See their paper and a survey by B. Sagan for further reference \cite{Sagansurvey}.

\begin{definition}
Let $X$ be a finite set equipped with an action of the finite cyclic group $C \cong \mathbb{Z}/n\mathbb{Z}$ with generator $c$, let $X(q)$ be a polynomial, and let $\zeta$ be an $n^{th}$ root of unity. The triple $(X, C, X(q)$ is said to exhibit the {\em cyclic sieving phenomenon} if $|X^{c^d}| = X(\zeta^d)$ for any integer $d>0$, where $X^{c^d}$ denotes the set of all elements of $X$ fixed by $c^d$.
\end{definition}

One way of obtaining cyclic sieving results is via the following, which can be found in Sagan's survey \cite{Sagansurvey} and follows from a result of Springer \cite{Springer}. 
\begin{theorem}[{{\cite[Theorem 8.2]{Sagansurvey}, \cite{Springer}}}]
\label{sieving}
Let $W$ be a finite complex reflection group and let $C \leq W$ be cyclically generated by a regular element $g$. Let $V$ be a $W$-module with a basis $B$ such that $gB = B$. Then the triple
\[
(B, C, F^{V}(q))
\]
exhibits the cyclic sieving phenomenon, where $F^V(q)$ denotes the fake degree polynomial for $V$.
\end{theorem}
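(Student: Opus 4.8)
The plan is to reduce the cyclic sieving statement to a single trace identity and then feed that identity to Springer's theory of regular elements. First I would unwind the two sides of the desired equality $|B^{c^k}| = F^V(\zeta^k)$, where $c = g$ generates $C$, the integer $n$ is the order of $g$, and $\zeta = e^{2\pi i/n}$. Because $g$ permutes the basis $B$ (this is exactly the hypothesis $gB = B$), the matrix of $g^k$ in the basis $B$ is a permutation matrix for every $k$, so its trace counts the fixed points: $\operatorname{tr}(g^k \mid V) = |B^{g^k}| = |B^{c^k}|$. Thus the left-hand side of the cyclic sieving condition is literally the character value $\chi^V(g^k)$, and the whole statement collapses to proving
\[
\chi^V(g^k) = F^V(\zeta^k) \qquad \text{for all } k.
\]

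The second step is to observe that every power $g^k$ is again a regular element, carrying the corresponding power eigenvalue. Regularity of $g$ is witnessed by an eigenvector $v$ lying off every reflecting hyperplane, with $gv = \zeta v$; the same vector satisfies $g^k v = \zeta^k v$ and still avoids every hyperplane, so $g^k$ is $\zeta^k$-regular (of order $n/\gcd(n,k)$, with $\zeta^k$ a primitive root of the corresponding order). This is the point that lets me treat all powers on an equal footing, rather than only the generator $g$ itself.

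The third step invokes Springer's regular-element theorem: for a $\zeta$-regular element $w$ of $W$ and any irreducible $W$-module $U$, one has $\chi^U(w) = F^U(\zeta)$, with the root-of-unity convention chosen to match the regular eigenvalue. Applying this to $w = g^k$, which is $\zeta^k$-regular by the second step, gives $\chi^U(g^k) = F^U(\zeta^k)$ for each irreducible $U$. Since both the character $\chi^{(-)}$ and the fake degree $F^{(-)}$ are additive over direct sums, decomposing the given module $V$ into irreducibles upgrades this to $\chi^V(g^k) = F^V(\zeta^k)$. Combining with the first step yields $|B^{c^k}| = F^V(\zeta^k)$, which is precisely the cyclic sieving phenomenon for $(B, C, F^V(q))$.

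The main obstacle lives entirely in the third step: Springer's trace formula is the deep input, and the real care is in pinning down conventions. I would need to confirm that the generator $g$ and the root of unity are compatibly chosen, so that the eigenvalue realizing regularity is $\zeta = e^{2\pi i/n}$ rather than some other primitive root, and that the fake degree $F^V$ is normalized to agree with it — in particular whether $V$ or its dual $V^*$ enters, which can replace $\zeta^k$ by $\zeta^{-k}$ or introduce a global sign. Once these bookkeeping choices are fixed consistently, the first two steps are formal and the result follows.
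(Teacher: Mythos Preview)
The paper does not give its own proof of this statement: it is quoted as a background result from Sagan's survey and Springer, with no argument supplied beyond the citation. Your proposal is the standard proof that those references contain --- reduce to the trace identity $\chi^V(g^k)=F^V(\zeta^k)$ via the permutation-matrix observation, note that powers of regular elements are regular, and invoke Springer's character formula --- so there is nothing in the paper to compare against, and your sketch is correct.
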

See \cite{Sagansurvey} for a complete definition of the fake degree polynomial, we will only need the following.
\begin{proposition}
Let $\lambda$ be a partition of $n$ and let $S^\lambda$ be the corresponding Specht module. The fake degree polynomial $F^{S^\lambda}(q)$ is given by
\[
F^{S^\lambda}(q) = q^{b(\lambda)}\frac{[n]!_q}{\prod_{(i,j) \in \lambda} [h_{ij}]_q}
\]
where $b(\lambda) = 0\lambda_1 + \lambda_2 + 2\lambda_3 + \cdots$ and $h_{ij}$ denotes the hook length of box $(i,j)$ in the Young diagram of $\lambda$.
\end{proposition}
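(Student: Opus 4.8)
The plan is to reduce this identity to two standard inputs: a representation-theoretic identification of the fake degree as a major-index generating function, and a combinatorial $q$-analog of the hook length formula. Recall that, by definition, $F^{S^\lambda}(q)$ is the graded multiplicity generating function $\sum_d m_d q^d$, where $m_d$ is the multiplicity of $S^\lambda$ in the degree-$d$ piece of the coinvariant algebra $\mathbb{C}[x_1,\dots,x_n]/(e_1,\dots,e_n)$ of $\mathfrak{S}_n$. The first input is the Lusztig--Stanley theorem (see \cite{Sagansurvey}), which evaluates this graded multiplicity as
\[
F^{S^\lambda}(q) = \sum_{T \in \mathrm{SYT}(\lambda)} q^{\mathrm{maj}(T)},
\]
where $\mathrm{maj}(T) = \sum_{i \in \mathrm{Des}(T)} i$ and $\mathrm{Des}(T)$ is the descent set of the standard Young tableau $T$. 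Granting this, the proposition becomes the purely combinatorial identity
\[
\sum_{T \in \mathrm{SYT}(\lambda)} q^{\mathrm{maj}(T)} = q^{b(\lambda)}\frac{[n]!_q}{\prod_{(i,j)\in\lambda}[h_{ij}]_q}.
\]

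I would establish this identity through the principal specialization of the Schur function $s_\lambda$. On one hand, Gessel's expansion $s_\lambda = \sum_{T \in \mathrm{SYT}(\lambda)} F_{\mathrm{Des}(T)}$ into fundamental quasisymmetric functions, evaluated at $(1,q,q^2,\dots)$, gives
\[
s_\lambda(1,q,q^2,\dots) = \frac{\sum_{T \in \mathrm{SYT}(\lambda)} q^{\mathrm{maj}(T)}}{\prod_{i=1}^{n}(1-q^i)},
\]
by the standard $P$-partition evaluation of each $F_{\mathrm{Des}(T)}$. On the other hand, the classical stable principal specialization formula reads
\[
s_\lambda(1,q,q^2,\dots) = \frac{q^{n(\lambda)}}{\prod_{(i,j)\in\lambda}(1-q^{h_{ij}})}, \qquad n(\lambda) = \sum_i (i-1)\lambda_i.
\]
Equating the two expressions and clearing the denominator $\prod_{i=1}^n(1-q^i)$, the common factor $(1-q)^n$ cancels between $[n]!_q = \prod_i (1-q^i)/(1-q)^n$ and $\prod_{(i,j)}[h_{ij}]_q = \prod_{(i,j)}(1-q^{h_{ij}})/(1-q)^n$, recovering exactly the right-hand side with $b(\lambda) = n(\lambda)$. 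As a sanity check on the exponent, the minimum of $\mathrm{maj}$ over $\mathrm{SYT}(\lambda)$ equals $n(\lambda)$, matching the lowest power of $q$ on the right, where $[n]!_q/\prod[h_{ij}]_q$ is a polynomial with constant term $1$.

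The main obstacle is not any deep step but the careful bookkeeping of conventions. One must confirm that the grading used in the fake degree of Theorem \ref{sieving} matches the descent (rather than the ascent/co-descent) convention for $\mathrm{maj}$, and in particular that the formula is attached to $S^\lambda$ and not to its conjugate $S^{\lambda'}$; the two differ by swapping $\mathrm{maj}$ for $\mathrm{comaj}$ and $b(\lambda)$ for $b(\lambda')$, equivalently by the substitution $q \mapsto q^{-1}$ up to an overall shift. The cleanest way to pin down the convention is to test the two extreme shapes: for $\lambda=(n)$ we have $b(\lambda)=0$, every hook-length product equals $[n]!_q$, and the formula gives $F^{S^{(n)}}(q)=1$, consistent with the trivial module occupying degree $0$; for $\lambda=(1^n)$ we have $b(\lambda)=\binom{n}{2}$, again $\prod[h_{ij}]_q=[n]!_q$, and the formula gives $F^{S^{(1^n)}}(q)=q^{\binom{n}{2}}$, consistent with the sign module occupying the top degree $\binom{n}{2}$ of the coinvariant algebra. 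Both checks succeed, fixing the convention and completing the proof once the two cited identities are assembled.
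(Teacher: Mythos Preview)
The paper does not actually prove this proposition: it is presented as background material with a pointer to \cite{Sagansurvey} for the definition of the fake degree polynomial, and the formula is simply quoted as a standard fact. So there is no proof in the paper to compare your proposal against.

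That said, your argument is a correct and standard derivation. Identifying the fake degree with $\sum_{T\in\mathrm{SYT}(\lambda)} q^{\mathrm{maj}(T)}$ via Lusztig--Stanley, and then evaluating that sum by comparing two expressions for the stable principal specialization $s_\lambda(1,q,q^2,\dots)$, is exactly the usual route to the $q$-hook length formula. Your convention checks at $\lambda=(n)$ and $\lambda=(1^n)$ are a sensible way to confirm that $b(\lambda)$ (rather than $b(\lambda')$) is the correct exponent under the paper's conventions. In short, your proposal supplies a valid proof of a result the paper takes for granted.
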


\section{Weakly-noncrossing jellyfish invariants are a basis}

In this section, we extend the linearly independent set given in \cite{FPPS} to a basis by introducing a weaker version of the noncrossing condition for ordered set partitions. The weaker version is similar to the noncrossing tableau defined by P. Pylyavskyy in \cite{Pylyavsky}. Our version will differ in that it will depend on $r$.

\begin{definition}
Let $A = \{a_1,a_2, \dots, a_{|A|}\}$ and $B = \{b_1, b_2, \dots, b_{|B|}\}$ be two subsets of $[n]$ each of size $\geq r$. We say that $A$ and $B$ are $r$-weakly noncrossing if the following holds:
\begin{enumerate}
\item For each $i = 1\dots, r-2$, The arc $(a_i,a_{i+1})$ does not cross the arc $(b_i, b_{i+1})$.
\item For any $j_1,j_2 \geq r$, the arc $(a_{r-1}, a_{j_1})$ does not cross the arc $(b_{r-1}, b_{j_2})$.
\end{enumerate}
An (ordered) set partition is $r$-weakly noncrossing if its blocks are pairwise $r$-weakly noncrossing.
\end{definition}
One can think of this definition as being noncrossing in the sense of Pylyavskyy in the first $r-2$ entries, and noncrossing in the strong sense in the remaining entries.

Let $WNC(n,d,r)$ denote the set of all set partitions of $[n]$ into $d$ blocks each of size at least $r$ which are $r$-weakly noncrossing.

We first show that the set of $r$-weakly noncrossing set partitions is the correct size:
\begin{proposition}
\label{tableauxbiject}
There is a bijection between standard Young tableaux of shape $(d^r, 1^{n-rd})$ and $r$-weakly noncrossing set partitions in $WNC(n,d,r)$.
\end{proposition}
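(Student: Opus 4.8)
The plan is to build an explicit bijection by a growth (insertion) procedure and then to isolate its entire content as a single uniqueness statement, which is governed by Pylyavskyy's theorem on the heads of the blocks and by the noncrossing condition on the tails. Write $\lambda = (d^r,1^{n-rd})$ and let $m = n-rd$ be the length of its leg. Given $\pi \in WNC(n,d,r)$, I would read the elements $1,2,\dots,n$ in increasing order and grow a tableau as follows: if the element read is the $j$-th smallest of its block with $j\le r$, append a box (with that label) to the end of row $j$; if $j>r$, append a box to the bottom of the first column (the leg). Since every $j$-th smallest element of a block is preceded by the $(j-1)$-th smallest of the same block, row $j$ is never longer than row $j-1$, and one checks that each appended box is a legal corner; hence the growing diagram is always a standard Young tableau whose final shape is exactly $\lambda$. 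Call this map $\Phi$.

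The key structural observation is that, because boxes are always appended at the \emph{end} of a row, the filling produced by $\Phi$ is completely determined by the position data of $\pi$, i.e.\ by the function recording, for each element, the row it will occupy (equivalently the sets $P_k$ of elements that are $k$-th smallest in their block for $1\le k\le r$, together with the leg elements). In particular $\Phi(\pi)$ forgets how the elements are grouped into blocks. Conversely, an SYT $T$ of shape $\lambda$ records precisely this position data. Thus the proposition is equivalent to the assertion that each position data arising from an SYT of shape $\lambda$ admits a \emph{unique} $r$-weakly noncrossing grouping into blocks; granting this, $\Phi$ restricts to a bijection $WNC(n,d,r)\to\mathrm{SYT}(\lambda)$ whose inverse sends $T$ to the unique $r$-weakly noncrossing grouping of its position data.

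I would prove this uniqueness in two stages matching the two clauses of the definition of $r$-weakly noncrossing. For the heads — the sets $P_1,\dots,P_{r-1}$, out of which one must assemble $d$ increasing sequences $a_1<\dots<a_{r-1}$ — clause (1) is exactly Pylyavskyy's weakly noncrossing condition, so by Pylyavskyy's bijection \cite{Pylyavsky} between weakly noncrossing set partitions of shape $((r-1)^d)$ and standard Young tableaux (up to transposition, identifying rows $1,\dots,r-1$ of $T$ with an SYT of the $(r-1)\times d$ rectangle and standardizing the head elements), the head grouping is uniquely determined by $P_1,\dots,P_{r-1}$. For the tails — the remaining elements at positions $\ge r$, which must be attached to the hubs $a_{r-1}$ so that the fans $(a_{r-1},a_r),(a_{r-1},a_{r+1}),\dots$ are pairwise noncrossing per clause (2), while every block keeps size $\ge r$ — I would argue by a standard stack/noncrossing-parenthesization argument that reading the tails in increasing order forces each to join a unique hub, giving a unique legal attachment.

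The main obstacle is the tail stage together with its interface with the heads: the hub $a_{r-1}$ is shared between clauses (1) and (2), and the number of tails per block is variable, so one must show that the unique weakly noncrossing head grouping is always compatible with a unique strongly noncrossing tail attachment that respects the constraint "block size $\ge r$" (equivalently, that each hub receives enough tails), and that such an attachment exists for every position data coming from an SYT of $\lambda$ — this last point being exactly the surjectivity of $\Phi$. The base case $m=0$, where every block has size exactly $r$ and the two clauses collapse to the single weakly noncrossing condition, is precisely Pylyavskyy's theorem for the rectangle $(r^d)$. An alternative to the direct two-stage argument is to induct on the leg length $m$ by deleting the bottom box of the leg of $T$ (the largest element occupying a position $>r$), whose block necessarily has size $>r$, so that removal leaves an $r$-weakly noncrossing partition counted by $WNC(n-1,d,r)$; reducing to the case handled by Pylyavskyy.
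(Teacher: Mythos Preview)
Your map $\Phi$ is not a bijection: it is neither injective nor surjective. Take $r=2$, $d=2$, $n=5$, so $\lambda=(2,2,1)$. The two $2$-weakly noncrossing partitions $\{\{1,2\},\{3,4,5\}\}$ and $\{\{1,2,5\},\{3,4\}\}$ have the same position data $P_1=\{1,3\}$, $P_2=\{2,4\}$, $L=\{5\}$, hence the same image
\[
\Phi(\pi)=\begin{smallmatrix}1&3\\2&4\\5\end{smallmatrix}\,,
\]
while the standard tableau $\begin{smallmatrix}1&3\\2&5\\4\end{smallmatrix}$ lies outside the image. The error is in the tail stage: once the hubs $P_{r-1}$ and the tail set $P_r\cup L$ are fixed, the noncrossing attachment of tails to hubs is \emph{not} unique, and recording $P_r$ separately does not break the ambiguity (in the example above both attachments have the same smallest-tail set $\{2,4\}$). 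So the ``unique legal attachment'' you assert does not exist, and the induction on $m$ cannot repair this, since it presupposes the very bijection you are trying to build.

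The paper's proof resolves exactly this issue by recording in row $r$ the set $P_{\max}$ of \emph{largest} block elements rather than $P_r$; with that choice the noncrossing attachment \emph{is} forced (each non-recorded tail sits under a unique shortest arc from a hub to a maximal element), which is the content of Corollary~\ref{setsunique}. The price is that the tableau with rows $P_1,\dots,P_{r-1},P_{\max}$ is generally not standard of shape $\lambda$, so one must pass through an auxiliary family $T(n,d,r)$ of rectangular fillings and supply a separate, nontrivial bijection $\mathrm{SYT}(\lambda)\leftrightarrow T(n,d,r)$ by a shift/insert rule on row $r$. Your head stage --- matching $P_i$ to $P_{i+1}$ by the Catalan/noncrossing argument --- is correct and coincides with what the paper does for rows $1,\dots,r-1$; it is only the handling of row $r$ and the leg that needs to be replaced.
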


\begin{proof}
We show that both sets are in bijection with a set of rectangular tableaux filled with a subset of $[n]$:

\begin{definition}
Let $T(n,d,r)$ denote the set of all tableaux of shape $ \lambda = (d^r)$ filled with with integers in $[n]$ such that
\begin{enumerate}
\item Entries increase along rows and down columns.
\item No element of $[n]$ appears more than once.
\item For any $i$ which does not appear in the tableaux, the number of entries $j<i$ appearing in row $r-1$ strictly exceeds the number of entries $j<i$ appearing in row $r$.
\end{enumerate}
\end{definition}

\begin{example} Consider the two tableaux below.
\begin{center}
\begin{ytableau}
1&3&6&7\\
2&4&8&11\\
9&13&14&16\\
\end{ytableau}
\qquad\qquad\qquad
\begin{ytableau}
1&3&6&7\\
*(lightgray)2&*(lightgray)4&11&13\\
*(lightgray)5&*(lightgray)8&14&16\\
\end{ytableau}
\end{center}
The tableau on the left is an element of $T(16,4,3)$. The tableau on the right is not in $T(16,4,3)$ because 9 does not appear as a filling and there are the same number of fillings less than 9 in the second and third rows, highlighted in gray.

\end{example}

The bijection between $SYT(d^r,1^{n-rd})$ and $T(n,d,r)$ is as follows. Let $t$ be a standard Young tableaux of shape $(d^r,1^{n-rd})$.
\begin{enumerate}
\item
 If removing the blocks in rows larger than $r$ (which we will refer to as the {\em tail}) produces a tableaux in $T(n,d,r)$, do so. 
 \item Otherwise, let $i$ be the maximal element among the tail for which the number of elemenents $j<i$ in row $r-1$ equals the number of elements $j<i$ in row $r$. Remove the first block of row $r$ and all blocks below it, shift all blocks in row $r$ filled with $j<i$ one space to the left, and place a block filled with $i$ in the newly formed opening. 
 \end{enumerate}

The maximality of $i$ will guarantee that the third property of $T(n,d,r)$ is satisfied for elements larger than $i$, and the shifting left will guarantee it is satisfied for elements smaller than $i$. Call the resulting tableau $f(t)$.

To reverse this process, let $t'$ be a tableaux in $T(n,d,r)$. We obtain a standard Young tableaux of shape $(d^r, 1^{n-rd})$ as follows.
\begin{enumerate}
\item If the smallest element which does not appear in $t'$ is larger than the entry in the first box of row $r$, simply append all integers not already appearing in the tableau in increasing order as the tail.
\item Otherwise, let $i$ be the minimal filling in row $r$ which is smaller than the filling of the box one space up and to the right, or the largest element of row $r$ if no such element exists. Remove the box filled with $i$, shift all boxes to the left of it one space right, and insert the remaining entries in increasing order to form the tail. 
\end{enumerate}

The right shift will preserve the standard Young tableau property due to the minimality of $i$. Call the resulting tableau $g(t')$.

To verify that these two maps are indeed inverses, let $t \in SYT(d^r,1^{n-rd})$. If removing the tail of $t$ produces a tableau in $T(n,d,r)$, then it is clear that $g(f(t)) = t$. Otherwise, let $i$ be the element inserted into row $r$ to obtain $f(t)$. Before this insertion, there were the same number of elements less than $i$ in row $r-1$ and row $r$ of the tableau, so the filling one space up and to the right of $i$ in $f(t)$ must be larger than $i$. Additionally, all boxes $j$ to the left of $i$ were shifted over, and since we started with a standard Young tableau, the filling one space up and to the right of them is smaller than $j$. Therefore, $i$ is the filling removed by $g$, and $g(f(t)) = t$.

A similar argument shows that $f \circ g$ is also the identity. Indeed, if $i$ was the element removed from row $r$ by $g$, then there must be the same number of elements $j<i$ in row $r-1$ and $r$ of $g(t')$, and no other element larger than $i$ can have this propery as $t'$ is in $T(n,d,r)$. 

\begin{example} An example of this bijection is given below for $n=16, d=4, r=3$.  12 is the maximal filling of the tail for which the second and third rows contain the same number of lesser fillings.
\begin{center}

\begin{ytableau}
1&2&4&7\\
*(lightgray) 3&*(lightgray) 5&*(lightgray) 8&13\\
*(lightgray) 6& *(lightgray) 9&*(lightgray) 10&16\\
11\\
*(pink)12\\
14\\
15\\
\end{ytableau}\qquad$\rightarrow$\qquad
\begin{ytableau}
1&2&4&7\\
3&5&8&13\\
\none&*(cyan)9&*(cyan)10&16\\
\none\\
*(pink)12\\
\none\\
\none\\
\end{ytableau}\qquad$\rightarrow$\qquad
\begin{ytableau}
1&2&4&7\\
3&5&8&13\\
*(cyan)9&*(cyan)10&\none&16\\
\none\\
*(pink)12\\
\none\\
\none\\
\end{ytableau}\qquad$\rightarrow$\qquad
\begin{ytableau}
1&2&4&7\\
3&5&8&13\\
*(cyan)9&*(cyan)10&*(pink)12&16\\
\end{ytableau}
\end{center}
\end{example}

The bijection between $T(n,d,r)$ and $WNC(n,d,r)$ is essentially repeated applications of the standard Catalan bijection between two row rectangular standard Young tableaux and noncrossing matchings. Given a tableau $t \in T(n,d,r)$, for $i=1, \dots, r$, let $R_i(t)$ denote the entries in row $i$ of $t$. Place the numbers 1 through $n$ in a line, and for each $i =1 , \dots, r-1$, draw $d$ arcs between elements of $R_i$ and $R_{i+1}$ such that
\begin{enumerate}
\item Elements of $R_i$ are the left endpoints of arcs, and elements of $R_{i+1}$ are the right endpoints of arcs.
\item There do not exist two arcs $(a,b)$ and $(c,d)$ such that $a<c<b<d$.
\end{enumerate}
The standard Catalan bijection argument guarantees that this is uniquely possible.
Then, for each positive integer $m$ at most $n$ not appearing in $t$, there is a unique shortest arc $(a,b)$ created at step $r-1$ such that $a<m<b$. The third condition of $T(n,d,r)$ guarantees that such an arc exists, and the noncrossing condition above guarantees it is unique. Draw the arc $(a,m)$. Finally, create a set partition $\pi$ by placing all integers connected by arcs into the same block. Then $\pi \in WNC(n,d,r)$. To see that the noncrossing condition is satisfied, note that for $i= 1, \cdots, r-2$ if $a_i$ and $a_{i+1}$ are the $i^{th}$ and $(i+1)^{th}$ smallest elements of a block of $\pi$, then they must necessarily be connected by an arc created at step $i$ in the above process.

The inverse of this bijection is simple, given a set partition $\pi \in T(n,d,r)$ place the smallest element of each block in increasing order in row 1, the second smallest in row 2, and so on, up to row $r-1$. Finally, place the largest element of each block in row $r$. 

\begin{example} Consider the tableau shown below.

\begin{center}
\begin{ytableau}
1&2&4&7\\
3&5&8&11\\
9&10&12&16\\
\end{ytableau}
\end{center}

We get the following arc diagram. Arcs created by matching the first two rows are shown in red, arcs created by matching the second and third rows are shown in green, and arcs created by connecting missing entries are shown in blue.

\begin{center}
\begin{tikzpicture}[scale = .7]
\coordinate (p1) at (0,0);
\coordinate (p2) at (1,0);
\coordinate (p3) at (2,0);
\coordinate (p4) at (3,0);
\coordinate (p5) at (4,0);
\coordinate (p6) at (5,0);
\coordinate (p7) at (6,0);
\coordinate (p8) at (7,0);
\coordinate (p9) at (8,0);
\coordinate (p10) at (9,0);
\coordinate (p11) at (10,0);
\coordinate (p12) at (11,0);
\coordinate (p13) at (12,0);
\coordinate (p14) at (13,0);
\coordinate (p15) at (14,0);
\coordinate (p16) at (15,0);

\foreach \i in {1,...,16}{
\node at ($(p\i)-(0,-.5)$) {$\i$};
}
\draw[thick] (p1)--(p16);
\draw[thick,red]  (p11) arc (0:-180: 5);
\draw[thick,red]  (p3) arc (0:-180:.5);
\draw[thick,red]  (p5) arc (0:-180:.5); 
\draw[thick,red]  (p8) arc (0:-180:.5); 
\draw[thick,green]  (p12) arc (0:-180:.5); 
\draw[thick,green]  (p10) arc (0:-180:2.5); 
\draw[thick,green]  (p9) arc (0:-180:.5); 
\draw[thick,green]  (p16) arc (0:-180:6.5); 

\draw[thick,blue]  (p6) arc (0:-180:.5); 
\draw[thick,blue]  (p13) arc (0:-180:5); 
\draw[thick,blue]  (p14) arc (0:-180:5.5); 
\draw[thick,blue]  (p15) arc (0:-180:6); 
\end{tikzpicture}
\end{center}

The resulting set partition is $\{\{1,11,12\}, \{2,3,13,14,15,16\}, \{4,5,6,10\}, \{7,8,9\}\}$.

\end{example}

\end{proof}

The second half of the proof of Proposition~\ref{tableauxbiject} also gives the following corollary, which we will need later:

\begin{corollary}
\label{setsunique}
A $r$-weakly noncrossing set partition $\gamma$ is uniquely determined by the $r-1$ sets 
\[\{m \mid m \textrm{ is the } i^{th} \textrm{ smallest element of some block of } \gamma\}
\] for $i=1, \dots, r-1$, along with the set 
\[\{m \mid m \textrm{ is the largest element of some block of } \gamma\}\].
\end{corollary}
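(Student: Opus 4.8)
The plan is to read off the statement directly from the bijection between $T(n,d,r)$ and $WNC(n,d,r)$ constructed in the second half of the proof of Proposition~\ref{tableauxbiject}. Write $\Psi$ for the map sending a set partition to its tableau, i.e. the inverse of the bijection built there. By its explicit description, $\Psi(\gamma)$ is obtained by placing, for each $i = 1, \dots, r-1$, the $i^{th}$ smallest element of every block of $\gamma$ into row $i$ (in increasing order), and the largest element of every block into row $r$ (in increasing order). The first observation I would make is that the $r$ sets named in the statement are exactly the rows of $\Psi(\gamma)$: the set of $i^{th}$ smallest block-elements is row $i$ for $i = 1, \dots, r-1$, and the set of largest block-elements is row $r$.

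I would then note that a tableau in $T(n,d,r)$ is recovered from its rows-as-sets, since condition (1) in the definition of $T(n,d,r)$ forces the entries of each row to be listed in increasing order; thus knowing each row merely as a set, together with its index, reconstructs the entire filling. Hence the assignment $\gamma \mapsto (\text{the } r \text{ sets})$ factors as $\gamma \mapsto \Psi(\gamma) \mapsto (\text{rows of } \Psi(\gamma))$, a composition of the bijection $\Psi$ with the injective ``take the rows'' map. This composition is injective, which is precisely the assertion that $\gamma$ is determined by the $r$ sets; concretely, if $\gamma_1, \gamma_2 \in WNC(n,d,r)$ yield the same $r$ sets, then $\Psi(\gamma_1) = \Psi(\gamma_2)$, whence $\gamma_1 = \gamma_2$ because $\Psi$ is a bijection.

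Since all the substance is already contained in Proposition~\ref{tableauxbiject}, I do not expect a genuine obstacle; the one point deserving care is the first step, namely confirming that $\Psi$ sorts each row independently, so that its rows really are the indexed ``$i^{th}$ smallest'' and ``largest'' sets rather than being aligned block-by-block. This is exactly the behavior visible in the worked example accompanying the proof of Proposition~\ref{tableauxbiject}, where the rows are permuted relative to the blocks.
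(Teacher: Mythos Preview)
Your proposal is correct and follows essentially the same approach as the paper: both observe that the $r$ specified sets are precisely the row contents of the tableau $\Psi(\gamma)\in T(n,d,r)$, that each row is recovered by sorting, and that $\gamma$ is then recovered via the bijection of Proposition~\ref{tableauxbiject}. Your write-up is more detailed, but the argument is the same.
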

\begin{proof}
The information in these sets determines the elements of each row of the tableau in $T(n,d,r)$ as defined in the proof of Proposition~\ref{tableauxbiject}. Placing elements in increasing order within each row recovers the tableau, and thus the set partition.
\end{proof}

\begin{example}
Suppose $n=7$, $d=3$, $r=2$, and thus $\lambda = (2,2,2,1)$ . There are fourteen standard Young tableaux of shape $\lambda$, and fourteen 2-weakly noncrossing set partitions. The bijection between them is shown below, with the intermediary tableau in $T(7,3,2)$ and arc diagram shown as well.

\begin{center}
\ytableausetup{smalltableaux}
\begin{tikzpicture}[scale = .3]
\node at (-7,-1.2){\begin{ytableau}
1&2\\
3&4\\
5&6\\
7
\end{ytableau}};

\node at (-5,-.7) {$\rightarrow$};

\node at (-3,-.7) { \begin{ytableau}
1&2\\
3&4\\
6&7\\
\end{ytableau}};

\node at (-1,-.7) {$\rightarrow$};
\coordinate (p1) at (0,0);
\coordinate (p2) at (1,0);
\coordinate (p3) at (2,0);
\coordinate (p4) at (3,0);
\coordinate (p5) at (4,0);
\coordinate (p6) at (5,0);
\coordinate (p7) at (6,0);
\foreach \i in {1,...,7}{
\node at ($(p\i)-(0,-.5)$) {$\i$};
}
\draw[thick] (p1)--(p7);

\draw[thick,red]  (p4) arc (0:-180: 1.5);
\draw[thick,red]  (p3) arc (0:-180:.5);

\draw[thick,green]  (p6) arc (0:-180:1); 
\draw[thick,green]  (p7) arc (0:-180:2); 

\draw[thick,blue] (p5) arc (0:-180:.5);

\node at (7,-.7) {$\rightarrow$};

\node at (13,-.7) {$\{\{1,4,5,6\}, \{2,3,7\}\}$};

\end{tikzpicture}
\begin{tikzpicture}[scale = .3]
\node at (-7,-1.2){\begin{ytableau}
1&2\\
3&4\\
5&7\\
6
\end{ytableau}};

\node at (-5,-.7) {$\rightarrow$};

\node at (-3,-.7) { \begin{ytableau}
1&2\\
3&4\\
5&7\\
\end{ytableau}};

\node at (-1,-.7) {$\rightarrow$};
\coordinate (p1) at (0,0);
\coordinate (p2) at (1,0);
\coordinate (p3) at (2,0);
\coordinate (p4) at (3,0);
\coordinate (p5) at (4,0);
\coordinate (p6) at (5,0);
\coordinate (p7) at (6,0);
\foreach \i in {1,...,7}{
\node at ($(p\i)-(0,-.5)$) {$\i$};
}
\draw[thick] (p1)--(p7);

\draw[thick,red]  (p4) arc (0:-180: 1.5);
\draw[thick,red]  (p3) arc (0:-180:.5);

\draw[thick,green]  (p5) arc (0:-180:.5); 
\draw[thick,green]  (p7) arc (0:-180:2); 

\draw[thick,blue] (p6) arc (0:-180:1.5);

\node at (7,-.7) {$\rightarrow$};

\node at (13,-.7) {$\{\{1,4,5\}, \{2,3,6,7\}\}$};

\end{tikzpicture}

\begin{tikzpicture}[scale = .3]
\node at (-7,-1.2){\begin{ytableau}
1&2\\
3&5\\
4&6\\
7
\end{ytableau}};

\node at (-5,-.7) {$\rightarrow$};

\node at (-3,-.7) { \begin{ytableau}
1&2\\
3&5\\
6&7\\
\end{ytableau}};

\node at (-1,-.7) {$\rightarrow$};
\coordinate (p1) at (0,0);
\coordinate (p2) at (1,0);
\coordinate (p3) at (2,0);
\coordinate (p4) at (3,0);
\coordinate (p5) at (4,0);
\coordinate (p6) at (5,0);
\coordinate (p7) at (6,0);
\foreach \i in {1,...,7}{
\node at ($(p\i)-(0,-.5)$) {$\i$};
}
\draw[thick] (p1)--(p7);

\draw[thick,red]  (p5) arc (0:-180: 2);
\draw[thick,red]  (p3) arc (0:-180:.5);

\draw[thick,green]  (p6) arc (0:-180:.5); 
\draw[thick,green]  (p7) arc (0:-180:2); 

\draw[thick,blue] (p4) arc (0:-180:.5);

\node at (7,-.7) {$\rightarrow$};

\node at (13,-.7) {$\{\{1,5,6\}, \{2,3,4,7\}\}$};

\end{tikzpicture}
\begin{tikzpicture}[scale = .3]
\node at (-7,-1.2){\begin{ytableau}
1&2\\
3&5\\
4&7\\
6
\end{ytableau}};

\node at (-5,-.7) {$\rightarrow$};

\node at (-3,-.7) { \begin{ytableau}
1&2\\
3&5\\
4&7\\
\end{ytableau}};

\node at (-1,-.7) {$\rightarrow$};
\coordinate (p1) at (0,0);
\coordinate (p2) at (1,0);
\coordinate (p3) at (2,0);
\coordinate (p4) at (3,0);
\coordinate (p5) at (4,0);
\coordinate (p6) at (5,0);
\coordinate (p7) at (6,0);
\foreach \i in {1,...,7}{
\node at ($(p\i)-(0,-.5)$) {$\i$};
}
\draw[thick] (p1)--(p7);

\draw[thick,red]  (p5) arc (0:-180: 2);
\draw[thick,red]  (p3) arc (0:-180:.5);

\draw[thick,green]  (p4) arc (0:-180:.5); 
\draw[thick,green]  (p7) arc (0:-180:1); 

\draw[thick,blue] (p6) arc (0:-180:.5);

\node at (7,-.7) {$\rightarrow$};

\node at (13,-.7) {$\{\{1,5,6,7\}, \{2,3,4\}\}$};

\end{tikzpicture}

\begin{tikzpicture}[scale = .3]
\node at (-7,-1.2){\begin{ytableau}
1&2\\
3&6\\
4&7\\
5
\end{ytableau}};

\node at (-5,-.7) {$\rightarrow$};

\node at (-3,-.7) { \begin{ytableau}
1&2\\
3&6\\
5&7\\
\end{ytableau}};

\node at (-1,-.7) {$\rightarrow$};
\coordinate (p1) at (0,0);
\coordinate (p2) at (1,0);
\coordinate (p3) at (2,0);
\coordinate (p4) at (3,0);
\coordinate (p5) at (4,0);
\coordinate (p6) at (5,0);
\coordinate (p7) at (6,0);
\foreach \i in {1,...,7}{
\node at ($(p\i)-(0,-.5)$) {$\i$};
}
\draw[thick] (p1)--(p7);

\draw[thick,red]  (p6) arc (0:-180: 2.5);
\draw[thick,red]  (p3) arc (0:-180:.5);

\draw[thick,green]  (p5) arc (0:-180:1); 
\draw[thick,green]  (p7) arc (0:-180:.5); 

\draw[thick,blue] (p4) arc (0:-180:.5);

\node at (7,-.7) {$\rightarrow$};

\node at (13,-.7) {$\{\{1,6,7\}, \{2,3,4,5\}\}$};

\end{tikzpicture}
\begin{tikzpicture}[scale = .3]
\node at (-7,-1.2){\begin{ytableau}
1&3\\
2&4\\
5&6\\
7
\end{ytableau}};

\node at (-5,-.7) {$\rightarrow$};

\node at (-3,-.7) { \begin{ytableau}
1&3\\
2&4\\
6&7\\
\end{ytableau}};

\node at (-1,-.7) {$\rightarrow$};
\coordinate (p1) at (0,0);
\coordinate (p2) at (1,0);
\coordinate (p3) at (2,0);
\coordinate (p4) at (3,0);
\coordinate (p5) at (4,0);
\coordinate (p6) at (5,0);
\coordinate (p7) at (6,0);
\foreach \i in {1,...,7}{
\node at ($(p\i)-(0,-.5)$) {$\i$};
}
\draw[thick] (p1)--(p7);

\draw[thick,red]  (p4) arc (0:-180: .5);
\draw[thick,red]  (p2) arc (0:-180:.5);

\draw[thick,green]  (p6) arc (0:-180:1); 
\draw[thick,green]  (p7) arc (0:-180:2.5); 

\draw[thick,blue] (p5) arc (0:-180:.5);

\node at (7,-.7) {$\rightarrow$};

\node at (13,-.7) {$\{\{1,2,7\}, \{3,4,5,6\}\}$};

\end{tikzpicture}

\begin{tikzpicture}[scale = .3]
\node at (-7,-1.2){\begin{ytableau}
1&3\\
2&4\\
5&7\\
6
\end{ytableau}};

\node at (-5,-.7) {$\rightarrow$};

\node at (-3,-.7) { \begin{ytableau}
1&3\\
2&4\\
5&7\\
\end{ytableau}};

\node at (-1,-.7) {$\rightarrow$};
\coordinate (p1) at (0,0);
\coordinate (p2) at (1,0);
\coordinate (p3) at (2,0);
\coordinate (p4) at (3,0);
\coordinate (p5) at (4,0);
\coordinate (p6) at (5,0);
\coordinate (p7) at (6,0);
\foreach \i in {1,...,7}{
\node at ($(p\i)-(0,-.5)$) {$\i$};
}
\draw[thick] (p1)--(p7);

\draw[thick,red]  (p4) arc (0:-180: .5);
\draw[thick,red]  (p2) arc (0:-180:.5);

\draw[thick,green]  (p5) arc (0:-180:.5); 
\draw[thick,green]  (p7) arc (0:-180:2.5); 

\draw[thick,blue] (p6) arc (0:-180:2);

\node at (7,-.7) {$\rightarrow$};

\node at (13,-.7) {$\{\{1,2,6,7\}, \{3,4,5\}\}$};

\end{tikzpicture}
\begin{tikzpicture}[scale = .3]
\node at (-7,-1.2){\begin{ytableau}
1&3\\
2&5\\
4&6\\
7
\end{ytableau}};

\node at (-5,-.7) {$\rightarrow$};

\node at (-3,-.7) { \begin{ytableau}
1&3\\
2&5\\
6&7\\
\end{ytableau}};

\node at (-1,-.7) {$\rightarrow$};
\coordinate (p1) at (0,0);
\coordinate (p2) at (1,0);
\coordinate (p3) at (2,0);
\coordinate (p4) at (3,0);
\coordinate (p5) at (4,0);
\coordinate (p6) at (5,0);
\coordinate (p7) at (6,0);
\foreach \i in {1,...,7}{
\node at ($(p\i)-(0,-.5)$) {$\i$};
}
\draw[thick] (p1)--(p7);

\draw[thick,red]  (p5) arc (0:-180: 1);
\draw[thick,red]  (p2) arc (0:-180:.5);

\draw[thick,green]  (p6) arc (0:-180:.5); 
\draw[thick,green]  (p7) arc (0:-180:2.5); 

\draw[thick,blue] (p4) arc (0:-180:1);

\node at (7,-.7) {$\rightarrow$};

\node at (13,-.7) {$\{\{1,2,4,7\}, \{3,5,6\}\}$};

\end{tikzpicture}

\begin{tikzpicture}[scale = .3]
\node at (-7,-1.2){\begin{ytableau}
1&3\\
2&5\\
4&7\\
6
\end{ytableau}};

\node at (-5,-.7) {$\rightarrow$};

\node at (-3,-.7) { \begin{ytableau}
1&3\\
2&5\\
4&7\\
\end{ytableau}};

\node at (-1,-.7) {$\rightarrow$};
\coordinate (p1) at (0,0);
\coordinate (p2) at (1,0);
\coordinate (p3) at (2,0);
\coordinate (p4) at (3,0);
\coordinate (p5) at (4,0);
\coordinate (p6) at (5,0);
\coordinate (p7) at (6,0);
\foreach \i in {1,...,7}{
\node at ($(p\i)-(0,-.5)$) {$\i$};
}
\draw[thick] (p1)--(p7);

\draw[thick,red]  (p5) arc (0:-180: 1);
\draw[thick,red]  (p2) arc (0:-180:.5);

\draw[thick,green]  (p4) arc (0:-180:1); 
\draw[thick,green]  (p7) arc (0:-180:1); 

\draw[thick,blue] (p6) arc (0:-180:.5);

\node at (7,-.7) {$\rightarrow$};

\node at (13,-.7) {$\{\{1,2,4\}, \{3,5,6,7\}\}$};

\end{tikzpicture}
\begin{tikzpicture}[scale = .3]
\node at (-7,-1.2){\begin{ytableau}
1&3\\
2&6\\
4&7\\
5
\end{ytableau}};

\node at (-5,-.7) {$\rightarrow$};

\node at (-3,-.7) { \begin{ytableau}
1&3\\
2&6\\
5&7\\
\end{ytableau}};

\node at (-1,-.7) {$\rightarrow$};
\coordinate (p1) at (0,0);
\coordinate (p2) at (1,0);
\coordinate (p3) at (2,0);
\coordinate (p4) at (3,0);
\coordinate (p5) at (4,0);
\coordinate (p6) at (5,0);
\coordinate (p7) at (6,0);
\foreach \i in {1,...,7}{
\node at ($(p\i)-(0,-.5)$) {$\i$};
}
\draw[thick] (p1)--(p7);

\draw[thick,red]  (p6) arc (0:-180: 1.5);
\draw[thick,red]  (p2) arc (0:-180:.5);

\draw[thick,green]  (p5) arc (0:-180:1.5); 
\draw[thick,green]  (p7) arc (0:-180:.5); 

\draw[thick,blue] (p4) arc (0:-180:1);

\node at (7,-.7) {$\rightarrow$};

\node at (13,-.7) {$\{\{1,2,4,5\}, \{3,6,7\}\}$};

\end{tikzpicture}

\begin{tikzpicture}[scale = .3]
\node at (-7,-1.2){\begin{ytableau}
1&4\\
2&5\\
3&6\\
7
\end{ytableau}};

\node at (-5,-.7) {$\rightarrow$};

\node at (-3,-.7) { \begin{ytableau}
1&4\\
2&5\\
6&7\\
\end{ytableau}};

\node at (-1,-.7) {$\rightarrow$};
\coordinate (p1) at (0,0);
\coordinate (p2) at (1,0);
\coordinate (p3) at (2,0);
\coordinate (p4) at (3,0);
\coordinate (p5) at (4,0);
\coordinate (p6) at (5,0);
\coordinate (p7) at (6,0);
\foreach \i in {1,...,7}{
\node at ($(p\i)-(0,-.5)$) {$\i$};
}
\draw[thick] (p1)--(p7);

\draw[thick,red]  (p5) arc (0:-180: .5);
\draw[thick,red]  (p2) arc (0:-180:.5);

\draw[thick,green]  (p6) arc (0:-180:.5); 
\draw[thick,green]  (p7) arc (0:-180:2.5); 

\draw[thick,blue] (p3) arc (0:-180:.5);

\node at (7,-.7) {$\rightarrow$};

\node at (13,-.7) {$\{\{1,2,3,7\}, \{4,5,6\}\}$};

\end{tikzpicture}
\begin{tikzpicture}[scale = .3]
\node at (-7,-1.2){\begin{ytableau}
1&4\\
2&5\\
3&7\\
6
\end{ytableau}};

\node at (-5,-.7) {$\rightarrow$};

\node at (-3,-.7) { \begin{ytableau}
1&4\\
2&5\\
3&7\\
\end{ytableau}};

\node at (-1,-.7) {$\rightarrow$};
\coordinate (p1) at (0,0);
\coordinate (p2) at (1,0);
\coordinate (p3) at (2,0);
\coordinate (p4) at (3,0);
\coordinate (p5) at (4,0);
\coordinate (p6) at (5,0);
\coordinate (p7) at (6,0);
\foreach \i in {1,...,7}{
\node at ($(p\i)-(0,-.5)$) {$\i$};
}
\draw[thick] (p1)--(p7);

\draw[thick,red]  (p5) arc (0:-180: .5);
\draw[thick,red]  (p2) arc (0:-180:.5);

\draw[thick,green]  (p3) arc (0:-180:.5); 
\draw[thick,green]  (p7) arc (0:-180:1); 

\draw[thick,blue] (p6) arc (0:-180:.5);

\node at (7,-.7) {$\rightarrow$};

\node at (13,-.7) {$\{\{1,2,3\}, \{4,5,6,7\}\}$};

\end{tikzpicture}

\begin{tikzpicture}[scale = .3]
\node at (-7,-1.2){\begin{ytableau}
1&4\\
2&6\\
3&7\\
5
\end{ytableau}};

\node at (-5,-.7) {$\rightarrow$};

\node at (-3,-.7) { \begin{ytableau}
1&4\\
2&6\\
5&7\\
\end{ytableau}};

\node at (-1,-.7) {$\rightarrow$};
\coordinate (p1) at (0,0);
\coordinate (p2) at (1,0);
\coordinate (p3) at (2,0);
\coordinate (p4) at (3,0);
\coordinate (p5) at (4,0);
\coordinate (p6) at (5,0);
\coordinate (p7) at (6,0);
\foreach \i in {1,...,7}{
\node at ($(p\i)-(0,-.5)$) {$\i$};
}
\draw[thick] (p1)--(p7);

\draw[thick,red]  (p6) arc (0:-180: 1);
\draw[thick,red]  (p2) arc (0:-180:.5);

\draw[thick,green]  (p5) arc (0:-180:1.5); 
\draw[thick,green]  (p7) arc (0:-180:.5); 

\draw[thick,blue] (p3) arc (0:-180:.5);

\node at (7,-.7) {$\rightarrow$};

\node at (13,-.7) {$\{\{1,2,3,5\}, \{4,6,7\}\}$};

\end{tikzpicture}
\begin{tikzpicture}[scale = .3]
\node at (-7,-1.2){\begin{ytableau}
1&5\\
2&6\\
3&7\\
4
\end{ytableau}};

\node at (-5,-.7) {$\rightarrow$};

\node at (-3,-.7) { \begin{ytableau}
1&5\\
2&6\\
4&7\\
\end{ytableau}};

\node at (-1,-.7) {$\rightarrow$};
\coordinate (p1) at (0,0);
\coordinate (p2) at (1,0);
\coordinate (p3) at (2,0);
\coordinate (p4) at (3,0);
\coordinate (p5) at (4,0);
\coordinate (p6) at (5,0);
\coordinate (p7) at (6,0);
\foreach \i in {1,...,7}{
\node at ($(p\i)-(0,-.5)$) {$\i$};
}
\draw[thick] (p1)--(p7);

\draw[thick,red]  (p6) arc (0:-180: .5);
\draw[thick,red]  (p2) arc (0:-180:.5);

\draw[thick,green]  (p4) arc (0:-180:1); 
\draw[thick,green]  (p7) arc (0:-180:.5); 

\draw[thick,blue] (p3) arc (0:-180:.5);

\node at (7,-.7) {$\rightarrow$};

\node at (13,-.7) {$\{\{1,2,3,4\}, \{5,6,7\}\}$};

\end{tikzpicture}

\end{center}

\end{example}

We can now define and prove our basis.

\begin{theorem}
\label{basis}
Let $r\geq 2$. Order each weakly noncrossing set partition $\gamma \in WNC(n,d,r)$ to create an ordered set partition $\pi_\gamma$. Then the set  $\{[\pi_\gamma]_r \mid \gamma \in WNC(n,d,r)\}$ is a basis for the flamingo Specht module $S^{(d^r, 1^{n-rd})}$.
\end{theorem}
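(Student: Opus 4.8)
The plan is to establish that $\{[\pi_\gamma]_r \mid \gamma \in WNC(n,d,r)\}$ is a basis by a dimension-count-plus-linear-independence argument, leveraging the machinery already assembled. First I would recall that $\dim S^{(d^r,1^{n-rd})}$ equals the number of standard Young tableaux of that shape, by the general theory of Specht modules. By Proposition~\ref{tableauxbiject}, this number equals $|WNC(n,d,r)|$. Hence the proposed indexing set has exactly the right cardinality, and it suffices to prove that the set $\{[\pi_\gamma]_r\}$ is linearly independent; spanning will then follow automatically. (Alternatively one could prove spanning and deduce independence, but independence looks more tractable given the tools at hand.)

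The main work is therefore linear independence. Theorem~\ref{linindjelly} already gives this for the smaller, strongly noncrossing indexing set $\mathcal{NC}(n,d,r)$, and since every strongly noncrossing set partition is in particular $r$-weakly noncrossing, I would try to extend that argument rather than start from scratch. The natural strategy is a \emph{triangularity} argument: introduce a partial (or total) order on $WNC(n,d,r)$ and show that, expressed in a suitable basis of the ambient polynomial ring $\mathbb{C}[M]$, the invariant $[\pi_\gamma]_r$ has a distinguished ``leading term'' that is not cancelled by and does not coincide with the leading term of any other $[\pi_{\gamma'}]_r$. Concretely, each $J(T)$ is a product of minors, so $[\pi_\gamma]_r$ expands as a signed sum of monomials in the $x_{i,j}$; I would pick out the monomial coming from the ``diagonal'' jellyfish tableau associated to $\gamma$ (placing the $i^{\text{th}}$ smallest element of each block in row $i$ for $i \le r-1$ and the tail entries in their forced rows), and argue via Corollary~\ref{setsunique} that the $r-1$ row-sets together with the set of block-maxima recorded by this leading monomial determine $\gamma$ uniquely among $r$-weakly noncrossing partitions. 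This is exactly the combinatorial content Corollary~\ref{setsunique} was set up to provide, so it should pin down a unimodular (triangular) change of basis and yield independence.

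The hard part will be controlling the leading-term calculation carefully enough that the triangularity is genuine: one must verify that no \emph{other} tableau $T \in \mathcal{J}_r(\gamma)$ and no jellyfish tableau for a different $\gamma'$ produces the same extremal monomial, and that the signs do not conspire to annihilate it. The subtlety is that the $r$-weakly noncrossing condition, unlike the strong noncrossing condition, does not control how blocks interleave beyond row $r-1$, so the ordering used for triangularity must be chosen so that the row-sets for rows $1,\dots,r-1$ and the block-maxima are read off unambiguously; it is precisely here that one leans on the reconstruction in Corollary~\ref{setsunique}. I would expect to define the order lexicographically on these recorded sets and then check that the monomial corresponding to the diagonal tableau is the unique maximum within each $[\pi_\gamma]_r$ and is strictly larger than any monomial appearing in $[\pi_{\gamma'}]_r$ for $\gamma' < \gamma$. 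Once the transition matrix from $\{[\pi_\gamma]_r\}$ to this monomial set is shown to be triangular with nonzero diagonal, independence is immediate, and combined with the dimension count from Proposition~\ref{tableauxbiject} the theorem follows.
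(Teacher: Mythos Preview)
Your proposal is correct and follows essentially the same route as the paper: reduce to linear independence via the dimension count of Proposition~\ref{tableauxbiject}, then prove independence by showing each $[\pi_\gamma]_r$ has a distinct leading monomial, with Corollary~\ref{setsunique} supplying the uniqueness once that monomial records the sets $\{i^{\text{th}}\text{-smallest elements}\}$ for $i\le r-1$ together with the block maxima.

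The one point you flag as ``the hard part'' but leave unresolved is the choice of monomial order, and the paper's proof turns precisely on this. The paper orders the variables so that row index dominates, then within each row $i\ne r$ the column index increases left to right, but within row~$r$ the column index is read \emph{in reverse}. Under the induced lexicographic order, the leading term of each minor $M_{R_i(T)}^{\pi_i}$ places the $j^{\text{th}}$ smallest element of $\pi_i$ in row~$j$ for $j\le r-1$ and the \emph{largest} element of $\pi_i$ in row~$r$; hence the leading monomial of $[\pi_\gamma]_r$ encodes exactly the data of Corollary~\ref{setsunique}. The paper explicitly notes that without the reversal in row~$r$ the leading terms fail to be distinct, so this tweak is the key technical ingredient your outline is missing.
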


\begin{proof}
By Proposition~\ref{tableauxbiject} and Theorem~\ref{inspecht}, it suffices to show that $\{[\pi_\gamma]_r \mid \gamma \in WNC(n,d,r)\}$ is linearly independent. To do so, we introduce a monomial order and show that under this order, each $[\pi_\gamma]_r$ has a unique leading term.

Recall that the $r$-jellyfish invariant is a polynomial in the $\nu \times n$ variables:
\begin{center}$
\begin{bmatrix}
x_{1,1} & x_{1,2} & \cdots & x_{1,n}\\
x_{2,1} & x_{2,2} & \cdots & x_{2,n}\\
\vdots &  &\ddots& \vdots\\
x_{\nu, 1} &x_{\nu,2} & \cdots &x_{\nu, n}
\end{bmatrix}$
\end{center}
We order these variables in a somewhat unusual way. Define an order on these variables by $x_{i_1, j_1} < x_{i_2, j_2}$ if and only if one of the following holds:
\begin{enumerate}
\item $i_1 < i_2$
\item $i_1 = i_2 \neq r \textrm{ and } j_1 < j_2$
\item $i_1 = i_2 = r \textrm{ and } j_1 > j_2$
\end{enumerate}
In other words, we order them in reading order except we read the $r^{th}$ row backwards. We then take the lexicogrpahic monomial order with respect to this ordering of variables. The unusual ordering is chosen to make use of Corollary~\ref{setsunique}. Without reversing the $r^{th}$ row, lexicogrpahic leading terms are not unique.

 Let the $i^{th}$ block of $\gamma$ be 
\[ \gamma_i := \{\gamma_{i,1}, \gamma_{i,2}, \dots, \gamma_{i,|\gamma_i|}\}
\]
Let $T$ be a jellyfish tableau associated to $\gamma$. Then the leading term of $J(T)$ is straightforward to compute from the definition, we have
\[
\textrm{lt}(J(t)) = \left(\prod_{i=1}^d \prod_{j=1}^{r-1} x_{j,\gamma_{i,j}}\right) \prod_{i=1}^d x_{r, \gamma_{i, |\gamma_i|}} \prod_{j=r+1}^{\nu} x_{j, u_j}
\]
where $u_j$ is the entry appearing in row $j$ of $J(T)$. In words, for $i=1, \dots, r-1$,  $x_{i,j}$ will appear if and only if $j$ is the $i^{th}$ smallest element of some block of $\gamma$, and $x_{r,j}$ will appear if and only if $j$ is the largest element of some block of $\gamma$. The leading term of $[\pi_\gamma]_r$ will be the leading term of one of these $J(T)$, and we can see that the leading term contains the information of the sets described in Corllary~\ref{setsunique}. Thus, the leading term of $[\pi_\gamma]_r$ is unique and thus $\{[\pi_\gamma]_r \mid \gamma \in WNC(n,d,r)\}$ is linearly independent as desired.
\end{proof}

\begin{remark}
The property of being $r$-weakly noncrossing is not preserved under rotation, for example $\{1,4,5\}, \{2,3,6\}$ is weakly 3-noncrossing, but $\{\{1,3,4\}, \{2,5,6\}\}$ is not. So the basis given in Theorem~\ref{basis} is not rotation invariant as desired of a web basis. The next section will give a different basis which is rotation invariant in the case $r=3$.
\end{remark}

\section{Rotation invariant webs for the $r=3$ case}

For the rest of the paper, we specialize to the case $r=3$. We will introduce a new basis for this case which is rotation and reflection invariant. To index our basis, we introduce a subset normal plabic graphs which we call {\em augmented webs}. We call them augmentd webs to allude to the fact that they are similar to $SL_3$ webs, but potentially with vertices of higher degree. We will show that augmented webs are in bijection with 3-weakly noncrossing set partitions, and thus have the correct enumeration to index a basis of $S^{(d^3, 1^{n-3d})}$. The benefit of working with augmented webs over 3-weakly noncrossing set paritions is that the set of augmented webs is rotation invariant.

\begin{definition}
An {\em augmented web} is a normal plabic graph which contains no faces of degree less than 6 and no black vertices of degree less than 3. The {\em exceedance} of an augmented web is the number of black vertices minus the number of white vertices. Let $AW(n,d)$ denote the set of all augmented webs with $n$ boundary vertices and exceedance $d$.
\end{definition}

\begin{remark}
The term {\em exceedance} is chosen because the exceedance of an augmented web is also the number of exceedances in the trip permutation.
\end{remark}

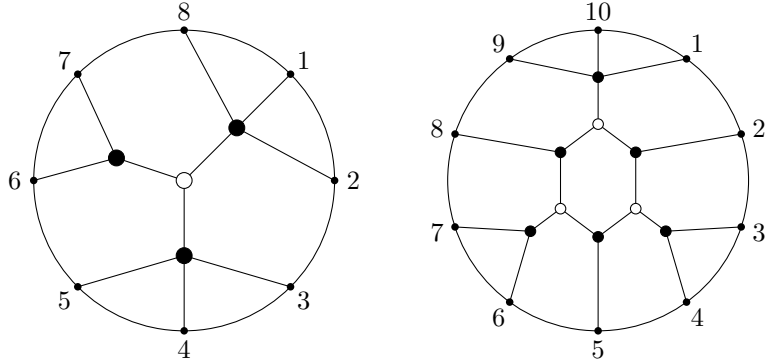
\begin{figure}[h]
\begin{center}
\begin{tikzpicture}
\equicc[2cm]{8}{0}{0};
\draw (N2)--(.7,.7)--(0,0)--(0,-1)--(N3);
\draw (N1)--(.7,.7)--(N8);
\draw (N4)--(0,-1)--(N5);
\draw (N7)--(-.9,.3)--(0,0);
\draw (N6)--(-.9,.3);
\draw[fill=white] (0,0) circle (3pt);
\draw[fill =black] (.7,.7) circle (3pt);
\draw[fill =black] (0,-1) circle (3pt);
\draw[fill =black] (-.9,.3) circle (3pt);
\end{tikzpicture}\qquad
\begin{tikzpicture}
\equicc[2cm]{10}{0}{0};
\coordinate (N11) at (0,1.5/2);
\coordinate (N12) at (1/2,.75/2);
\coordinate (N13) at (1/2,-.75/2);
\coordinate (N14) at (0,-1.5/2);
\coordinate (N15) at (-1/2,-.75/2);
\coordinate (N16) at (-1/2,.75/2);
\coordinate(N17) at (0,2.75/2);
\coordinate (N18) at (1.8/2,-1.35/2);
\coordinate (N19) at (-1.8/2,-1.35/2);
\draw (N6)--(N19)--(N7);
\draw (N5)--(N14)--(N15)--(N16)--(N11)--(N12)--(N13)--(N14);
\draw (N8)--(N16);
\draw (N9)--(N17)--(N1);
\draw (N10)--(N17)--(N11);
\draw (N2)--(N12);
\draw (N19)--(N15);
\draw (N3)--(N18)--(N4);
\draw (N18)--(N13);
\draw[fill=black]  (N17) circle (2pt);
\draw[fill=black]  (N18) circle (2pt);
\draw[fill=black]  (N19) circle (2pt);
\draw[fill=white]  (N11) circle (2pt);
\draw[fill=black]  (N12) circle (2pt);
\draw[fill=white]  (N13) circle (2pt);
\draw[fill=black]  (N14) circle (2pt);
\draw[fill=white]  (N15) circle (2pt);
\draw[fill=black]  (N16) circle (2pt);
\end{tikzpicture}
\end{center}
\caption{Examples of augmented in $A(8,2)$ and $A(10,3)$. }
\end{figure}

\begin{remark}
When an augmented web has no white vertices, it contains exactly the same information as a strongly noncrossing set partition, with the sets of all boundary vertices connected to a particular interior vertex forming the blocks.
\end{remark}

\subsection{Combinatorial properties of augmented webs}

In this subsection, we develop combinatorial results for augmented webs. 
Our first result is that all augmented webs are reduced plabic graphs.

\begin{proposition}
\label{reduced}
Let $W \in AW(n,d)$. Then $W$ is reduced. 
\end{proposition}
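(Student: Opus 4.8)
The plan is to apply the characterization of reduced plabic graphs recorded earlier (Theorem on bad features from \cite{FWZ}): a normal plabic graph is reduced if and only if it contains no roundtrip, no essential self-intersection, and no bad double-crossing. So it suffices to show that an augmented web $W$, which by definition has all faces of degree at least $6$ and all black vertices of degree at least $3$, exhibits none of these three bad features. I would organize the proof as three separate verifications, one for each bad feature.

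\textbf{Ruling out roundtrips.} A roundtrip is a closed cycle that turns left at every white vertex and right at every black vertex. I would argue that such a cycle would have to bound an interior region, and then count using the face-degree constraint. The key tool is a discrete Gauss--Bonnet / angle-counting argument: a roundtrip bounding a disk must enclose a region whose boundary turning is consistent with the left/right turning rules, but the hypothesis that every interior face has degree at least $6$ forces the total interior curvature to be too small (faces are ``too flat'') to close up a roundtrip. Concretely, I expect to count, for the region enclosed by a putative roundtrip, the contributions of interior faces (each of degree $\geq 6$) and show that their combinatorial curvature cannot sum to what a closed monochromatic-turning cycle requires. This is the step where the ``no faces of degree less than $6$'' hypothesis does the essential work, exactly as the analogous nonellipticity condition rules out closed loops in Kuperberg's $SL_3$ webs.

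\textbf{Ruling out essential self-intersections and bad double-crossings.} For a trip that traverses the same edge twice, or two trips that both cross edges $e_1$ then $e_2$ in the same order, I would extract from such a configuration a bounded region whose boundary is made up of trip segments together with at most the shared edges, and again derive a contradiction with the degree-$\geq 6$ face condition. The strategy is the same in spirit: a self-intersection or a bad double-crossing produces a small enclosed bigon-like or monogon-like region, and the face-degree lower bound forbids regions that are too small. I would handle the black-vertex degree condition here as well, since low-degree black vertices could otherwise create short trips that fold back on themselves; the degree-$\geq 3$ assumption guarantees that trips genuinely turn rather than degenerate.

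I expect the roundtrip case to be the main obstacle, because it requires the cleanest version of the curvature-counting estimate and one must be careful that a roundtrip need not be embedded or innermost without further argument. The natural fix is to pass to a minimal (innermost) bad configuration: among all roundtrips, self-intersections, or bad double-crossings, choose one bounding a minimal region, so that the enclosed faces are genuine faces of $W$ and the counting argument applies without interference from nested configurations. With the bad-feature theorem and the two defining inequalities of an augmented web in hand, ruling out each of the three features should then be a finite combinatorial check, and I would present it as such rather than grinding through every subcase.
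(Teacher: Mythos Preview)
Your proposal takes a genuinely different route from the paper's, and the paper's is much shorter. You invoke the bad-features criterion and then plan a Gauss--Bonnet/Euler-formula argument to exclude roundtrips, essential self-intersections, and bad double-crossings. The paper instead uses the \emph{definition} of reduced directly: a normal plabic graph is reduced if it is not normal-move-equivalent to a graph containing a forbidden configuration (a degree-$2$ face or an interior leaf). The two normal moves are urban renewal, which requires a square face, and the flip move, which requires a degree-$2$ interior black vertex. An augmented web has neither, so its normal-move equivalence class is $\{W\}$. Then one only checks that $W$ itself has no forbidden configuration, which is immediate from the degree hypotheses. That is the entire proof.

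Your approach is not wrong in spirit---an Euler-formula count against the degree-$\geq 6$ hypothesis is exactly how the paper later proves Lemma~\ref{nokiss}---but as stated it is only a plan, and the roundtrip case in particular would need a careful argument you have not supplied (e.g.\ why a minimal roundtrip bounds a disk whose faces are all faces of $W$, and what precisely the turning/curvature count yields). The paper's observation that no moves apply sidesteps all of this: it is worth noting that the bad-features theorem is used in the paper in the \emph{other} direction, to deduce structural facts about trips \emph{after} reducedness is already known, not to establish reducedness.
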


\begin{proof}
As no square faces or vertices of degree two are present, normal plabic graph moves are not possible. Thus it suffices to check that $W$ itself has no forbidden configurations and this is clear.
\end{proof}

Next, we show that augmented webs have an inductive structure we can exploit. 
\begin{lemma}
\label{induct}
Let $W \in AW(n,d)$, and suppose $W$ has at least one white vertex. Then for each connected component of $W$, there exists at least two black vertices each connected to exactly one white vertex. 
\end{lemma}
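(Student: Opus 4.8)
The plan is to analyze the structure of an augmented web near its white vertices, using the constraints that white vertices have degree exactly three, that boundary vertices connect only to black vertices, and that same-colored vertices are never adjacent. The key observation is that since $W$ has a white vertex and no face has degree less than $6$, the graph has enough ``room'' that white vertices cannot all be crowded together. I would fix a connected component $C$ that contains at least one white vertex (if a component has no white vertex, I will need to argue separately, but the claim as stated quantifies over each component, so I would first reduce to components containing white vertices and handle pure-black components using the augmented web conditions on black vertex degree). My target is to produce two distinct black vertices in $C$, each adjacent to exactly one white vertex.

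The main tool I would use is a counting/extremal argument. Consider the bipartite-like incidence between black and white vertices within $C$. Because white vertices have degree exactly $3$ and are adjacent only to black vertices (no white-white edges), each white vertex contributes exactly three black neighbors. I would look at the ``white-skeleton'' — the subgraph spanned by white vertices and their incident edges — and argue that a black vertex adjacent to two or more white vertices, together with the no-small-faces condition, forces a large cycle structure. First I would rule out configurations that would create a face of degree $4$: if too many black vertices each touch two white vertices, the faces bounded by alternating black/white vertices become too short. The degree-$6$ face condition is the quantitative engine here: a $4$-cycle through black-white-black-white vertices is exactly a forbidden square face, so any two white vertices sharing two common black neighbors is immediately excluded.

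The cleanest route is probably an extremal/innermost argument rather than raw counting. I would consider the planar embedding and look at white vertices lying on the boundary of the outermost region or, dually, pick a white vertex and trace outward. Concretely, I expect to argue: take the collection of all edges from black to white vertices; by the no-square-face condition the ``contraction'' obtained by collapsing each white vertex's three edges behaves like a trivalent structure resembling an $SL_3$ web, and in any such planar trivalent graph drawn in a disk, an Euler-characteristic count forces at least two black vertices of the desired type per component. The analogy the paper draws to nonelliptic $SL_3$ webs strongly suggests an Euler-formula computation: $V - E + F = 2$ for the planar graph in the disk, combined with the face-degree bound $\sum_{\text{faces}} \deg \geq 6(\#\text{internal faces})$ and the degree conditions, yields an inequality that cannot hold unless several black vertices are attached to a single white vertex each.

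The hardest part will be handling the interaction with the boundary and making the ``exactly one white vertex'' condition precise rather than ``at most one.'' It is easy to find black vertices adjacent to at most one white vertex, but forcing the existence of two that are adjacent to \emph{exactly} one (hence at least one) requires ruling out the degenerate possibility that every black vertex in the component is either adjacent to no white vertex or to two-or-more. I expect to close this by the extremal principle: among all black vertices adjacent to some white vertex, consider one extremal with respect to the planar/radial ordering around a fixed white vertex, and show its position forces it to have a unique white neighbor; then repeat the argument on the ``other side'' of the component to extract a second such black vertex. Making the extremality well-defined in the presence of multiple edges and the disk boundary, and ensuring the two vertices produced are genuinely distinct, is where I anticipate the real care is needed.
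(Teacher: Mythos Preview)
Your proposal is a strategy sketch, not a proof: you outline two possible approaches (an Euler-formula count and an extremal/innermost argument) but execute neither. The Euler-formula idea in particular is left at the level of ``yields an inequality that cannot hold unless\ldots'', without ever writing down the inequality or checking that its failure produces \emph{two} black vertices with \emph{exactly one} white neighbor (as you yourself note, getting ``at most one'' is easy, but ruling out zero is the content). The extremal argument is even vaguer: ``extremal with respect to the planar/radial ordering around a fixed white vertex'' is not a well-defined notion here, and you do not say what property of the extremal vertex forces uniqueness of its white neighbor. As written, there is no step one could check.

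The paper's argument is quite different and much shorter. It builds an auxiliary graph $G$ whose vertices are of two types: the maximal cycles of $W$ (cycles with no edge incident to an interior face), and the interior vertices lying outside every cycle. Edges of $G$ record adjacency between such a vertex and a maximal cycle. Planarity forces $G$ to be a forest; the presence of a white vertex forces $G$ to have at least one edge; hence $G$ has at least two leaves. A leaf of $G$ cannot be a maximal cycle (since $W$ is reduced, any maximal cycle must have at least two exterior black vertices attached to it, via the no-roundtrip/no-self-intersection criterion), so each leaf is an interior vertex exterior to every cycle and adjacent to exactly one maximal cycle --- which forces it to be a black vertex with exactly one white neighbor. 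This block-tree style reduction sidesteps entirely the delicate ``exactly one versus at most one'' issue you flagged: leaves of a tree automatically have exactly one neighbor.
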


\begin{proof}
Let $C$ be a maximal cycle in $W$, i.e. a cycle with no edge incident to an interior face of $W$. Since $W$ is reduced by Proposition~\ref{reduced}, it contains no round trips or essential self-intersections. Therefore there are at least two black vertices of $W$ exterior to $C$ which connect to a white vertex in $C$. Create a graph $G$ with two types of vertices: a vertex for every vertex of $W$ which is on the exterior of every cycle in $W$, and a vertex for every maximal cycle. Add an edge between a vertex $v$ and a maximal cycle $C$ whenever $v$ is adjacent to a vertex in $C$. Then $G$ is a forest, and since $W$ had at least one white vertex, $G$ has at least one edge. So $G$ has two leaves, and these two leaves must be black vertices connected to exactly one white vertex.
\end{proof}

The use of Lemma~\ref{induct} is that every augmented web with at least one white vertex can be built out of an augmented web with one fewer white vertices in the following way. Let $u$ and $v$ be the black vertex and its white neighbor identified by Lemma~\ref{induct}. If we remove vertices $u$ and $v$, then connect the neighbors of $v$ to the boundary by at least one edge each in a planar way, we get an augmented web $W'$.

We will need a notion of depth of a face, which we now define. 
\begin{definition}
Let $W \in AW(n,d)$. Let $f$ be a face of $W$, and let $f_0$ be the face connected to the section of boundary between 1 and $n$. The {\em depth} of $f$ is the number of exceedances which separate $f$ from $f_0$. 

Let $e$ be an edge of $W$.  We say that $e$ is a {\em depth boundary edge} if the depth of the faces incident to $e$ are not equal. Equivalently, $e$ is a depth boundary edge if exactly one of the trips passing through $e$ is an exceedance. We say $e$ is a {\em left-to-right} depth boundary edge if, when oriented towads its black vertex endpoint, the depth of the face on the right is higher than the depth of the face on the left. Equivalently, $e$ is a left-to-right depth boundary if only the trip passing through $e$ from towards its black vertex is an exceedance. We define {\em right-to-left} deth boundary edges similarly.
\end{definition}

\begin{lemma}
\label{depthboundaryvertex}
Let $W \in AW(n,d)$. Let $v$ be an interior vertex of $W$. Then exactly two edges incident to $v$ are depth boundary edges.
\end{lemma}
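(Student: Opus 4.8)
The plan is to reduce the statement to a counting problem about the trips through $v$ and then to control that count using reducedness. First I would work with the equivalent formulation of a depth boundary edge given in the definition: an edge $e$ is a depth boundary edge precisely when exactly one of the two trips passing through $e$ is an exceedance. So the goal becomes counting the incident edges of $v$ at which exactly one of the two trips is an exceedance.

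Next I would set up the local picture at $v$. List the incident edges $e_1, \dots, e_k$ in clockwise order, with corner face $f_i$ lying between $e_i$ and $e_{i+1}$. Each edge carries exactly two oppositely directed trip strands (one heading toward $v$, one away), and the rules of the road (turn right at a black vertex, left at a white vertex) pair the incoming strands with the outgoing strands into $k$ \emph{passes} through $v$, one cutting across each corner $f_i$. Label a pass $E$ if the trip carrying it is an exceedance and $N$ otherwise. Since the two trips running along $e_i$ are carried by the two passes cutting the corners $f_{i-1}$ and $f_i$ adjacent to $e_i$, the edge $e_i$ is a depth boundary edge iff these two passes carry opposite labels. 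Hence the number of depth boundary edges at $v$ equals the number of sign changes in the cyclic word of $E/N$ labels around $v$; in particular this number is always even, so it suffices to show it is neither $0$ nor $\ge 4$.

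To get a grip on the labels I would use the left-to-right/right-to-left refinement. The two strands on every incident edge share the same toward-$v$/away-$v$ pattern, and the incident edges carry a consistent orientation relative to their black endpoint (at a black $v$ every incident edge points toward $v$; at a white $v$, whose neighbors must all be black, every incident edge points away from $v$). Since the direction in which depth increases across a depth boundary edge is dictated by which of its two trips is the exceedance, this consistency forces the face depths around $v$ to take exactly two consecutive values, with the deeper corners being precisely those cut by an exceedance pass. Consequently the number of depth boundary edges equals twice the number of maximal arcs of consecutive exceedance passes, and the lemma becomes the statement that the exceedance passes around $v$ form exactly one arc, which is nonempty and proper.

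The main obstacle is this last contiguity claim, and it is exactly where reducedness enters (Proposition~\ref{reduced} together with the characterization of reduced plabic graphs as those having no round trip, essential self-intersection, or bad double-crossing). If there were two or more arcs of deep corners, then there would be two exceedance passes through $v$ separated on both sides by non-exceedance passes; I would follow the two exceedance trips out of $v$ and argue, using planarity and the location of the base face $f_0$, that they are forced to meet again, producing a bad double-crossing (or, if they close up, a round trip), contradicting reducedness. A similar tracing argument rules out the degenerate all-same case of zero arcs, by producing both an exceedance and a non-exceedance pass at $v$ from the position of $f_0$. Ruling out the multiple-arc configuration is the crux and the step I expect to require the most care.
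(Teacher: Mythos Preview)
Your approach matches the paper's: both reduce to showing that among the $k$ passes through $v$, the exceedances form a single nonempty proper cyclic arc, and both invoke reducedness (no bad double crossings) for this. Your sign-change reformulation is correct and a bit more explicit than what the paper writes.

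Two comments on execution. First, your middle paragraph about the face depths around $v$ taking ``exactly two consecutive values'' is not something you can assert before proving the lemma; that is essentially a restatement of what you are trying to show. Everything you need is already in your first reformulation (depth boundary edges $=$ sign changes in the cyclic $E/N$ word), so you can drop that paragraph entirely. Second, the paper's argument for consecutiveness is more direct than the tracing you sketch: rather than following two specific exceedance trips and looking for a forced second meeting, the paper observes in one stroke that the starting boundary vertices of $t_1,\dots,t_k$ appear in the same cyclic order around $\partial W$ as the passes do around $v$ (otherwise two of the incoming half-trips would cross, forcing a bad double crossing), and likewise for the ending boundary vertices. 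Once both the starts and ends respect the cyclic order at $v$, the subset of $i$ with $\sigma_i<\epsilon_i$ is automatically a cyclic interval, and nonempty/proper comes for free from the interleaving of the $\sigma$'s and $\epsilon$'s. This single ordering observation replaces both your ``two exceedances meet again'' case and your ``all-same'' case, and avoids having to control exactly where along the trips the second crossing would occur.
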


\begin{proof}
Consider the set of all trips $t_1, \dots, t_k$ passing through $v$, ordered cyclically. Since $W$ is reduced, the starts of all these trips must appear in the same cyclic order around the boundary of $W$, since otherwise we would introduce a bad double corssing. Similarly, the ends of these trips appear in the same cyclic order around $W$. Therefore, the set of exceedances passing through $v$ is a proper nonempty subset of these trips which is cyclically consecutive around $v$. The first and last of these trips will contribute a depth boundary edge.
\end{proof}

\begin{lemma}
\label{nokiss}
Let $W \in AW(n,d)$. Let $u$ and $v$ be two adjacent vertices of $W$. Let $t_u$ be any trip which passes through $u$ but does not use edge $(u,v)$, and let $t_v$ be any trip which passes through $v$ but does not use edge $(u,v)$. Then trips $t_u$ and $t_v$ do not share any vertices.
\end{lemma}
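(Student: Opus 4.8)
The plan is to argue by contradiction using the fact, established in Proposition~\ref{reduced}, that $W$ is reduced and hence has no bad features: no roundtrips, no essential self-intersections, and no bad double-crossings. So I would assume that $t_u$ and $t_v$ share a vertex $w$ and manufacture one of these forbidden configurations. Note first that since $t_u$ and $t_v$ are required to avoid the edge $e = (u,v)$, both $u$ and $v$ must be interior vertices (a boundary vertex has a single incident edge, so no trip can pass through it while avoiding that edge); hence $u$ and $v$ have opposite colors, and I will say $u$ is black and $v$ is white, the other case being symmetric.

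First I would record the local picture at $e$. The edge $e$ carries exactly two trips, one in each direction: call $T_\alpha$ the trip traversing $e$ from $u$ to $v$ and $T_\beta$ the trip traversing $e$ from $v$ to $u$. Applying the rules of the road (turn right at the black vertex $u$, left at the white vertex $v$) pins down how every trip incident to $e$ behaves. In particular, because $v$ is trivalent there is a unique trip through $v$ avoiding $e$, namely $t_v$, and tracking the two non-$e$ edges at $v$ shows that $t_v$ shares one of its incident edges with $T_\alpha$ and the other with $T_\beta$ (each traversed in the opposite direction). At $u$ the trips avoiding $e$ form a ``fan'' pairing consecutive edges in the clockwise order, and $t_u$ is one of these; the argument will use only that $t_u$ passes through $u$ without using $e$, so it applies uniformly to every admissible $t_u$.

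The heart of the proof is a planarity argument. Assuming $t_u$ and $t_v$ meet, let $w$ be the first vertex along $t_v$ (starting from $v$) that also lies on $t_u$. Since no trip has an essential self-intersection, the portion of $t_v$ from $v$ to $w$ and the portion of $t_u$ from $w$ to $u$ are each simple arcs; concatenating them with the edge $e$ produces a closed curve $C$ bounding a region $D$ of the disk. I would then follow one of the distinguished trips $T_\alpha$ or $T_\beta$ across $C$: the turning data at $u$ and at $v$ computed above forces this trip to run along $e$ and then depart into the interior of $D$, while the same trip begins and ends on the boundary of the whole disk and so must eventually leave $D$ through $C$. As $C$ is built from pieces of $t_u$, $t_v$, and $e$, and the distinguished trip already shares an edge of $C$ with $t_v$ near $v$, this second passage forces either a repeated edge (an essential self-intersection) or a pair of edges traversed in the same order by two trips (a bad double-crossing), contradicting reducedness. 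The degenerate possibility $t_u = t_v$ is subsumed, since a single trip through both $u$ and $v$ avoiding $e$ already closes up with $e$ to bound such a region.

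The main obstacle is the directional bookkeeping in the last step: one must determine, from the clockwise orders of edges around $u$ and $v$, which side of $C$ is $D$ and which of $T_\alpha, T_\beta$ is trapped, and handle the degenerate situations in which the trapped trip runs along part of $C$ before diverging---it is precisely at the divergence point that the forbidden crossing is located. An alternative that sidesteps the Jordan-curve language, and may be cleaner, is to invoke the same consequence of reducedness used in the proof of Lemma~\ref{depthboundaryvertex}: the starting (resp.\ ending) boundary points of the trips through any fixed vertex occur in the same cyclic order as those trips around that vertex. Comparing the cyclic orders forced at the three vertices $u$, $v$, and the hypothetical common vertex $w$, together with the adjacency of $u$ and $v$ through $e$, should yield the contradiction directly.
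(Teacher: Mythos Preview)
Your approach differs substantially from the paper's, and the gap you flag as ``directional bookkeeping'' is, I believe, a genuine obstruction rather than a detail to be filled in. The paper does not argue via bad features at all; instead it gives a short counting argument using Euler's formula and, crucially, the augmented-web hypothesis that every internal face has degree at least~$6$. Assuming $t_u$ and $t_v$ meet at some vertex $x$, pieces of $t_u$, $t_v$ and the edge $(u,v)$ bound a cycle $C$ of some length $k$; one then counts vertices, edges, and faces of the planar subgraph on and inside $C$ (each cycle vertex on a trip segment contributes extra half-edges because trips turn at every vertex) and finds that the average degree of the bounded faces is strictly less than~$6$, contradicting the definition of $AW(n,d)$.

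The reason your route stalls is that sharing two edges is not the same as a bad double-crossing: the latter requires both trips to traverse the two common edges in the \emph{same} order. At the white vertex $v$, your trip $T_\alpha$ and $t_v$ traverse their common edge in \emph{opposite} directions (this follows directly from the left-turn rule at $v$), so when $T_\alpha$ later exits your region $D$ through a $t_v$-edge you typically obtain a \emph{good} double-crossing, which reduced plabic graphs happily allow. The cyclic-order alternative you propose has the same limitation: the constraint used in Lemma~\ref{depthboundaryvertex} encodes precisely the absence of bad double-crossings among trips through a single vertex, and does not transport across the edge $(u,v)$ to force a contradiction at $w$. In short, reducedness alone is not what is driving this lemma; the ``no small faces'' condition is doing real work, and the Euler-formula count is the clean way to exploit it.
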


\begin{proof}
This follows from Euler's formula for planar graphs. Assume the contrary, that trips $t_u$ and $t_v$ meet at some vertex $x$. Let $C$ be the cycle formed from $t_u$, $t_v$ and edge $(u,v)$, and suppose it is of length $k$. Consider the graph $G$ containing all vertices and edges of $W$ that are part of $C$ or in its interior. Let $V_{\textrm{int}}$ denote the number of vertices strictly in the interior of $C$, and let $\alpha$ be the average degree of these interior vertices. Then we have
\[
|E(G)| \geq \frac{5}{4}k + \frac{\alpha}{2}V_{\textrm{int}}-1
\]
and thus by Euler's formula the number of faces of $G$ not including the external face is at least $\frac{1}{4}k + \frac{\alpha-2}{2}V_{\textrm{int}}$. The total degree of these faces is 
\[
\frac{3}{2}k-2 + \frac{\alpha}V_{\textrm{int}} 
\]
and thus their average degree is strictly less than 6, a contradiction.
\end{proof}

\subsection{A bijection from tableaux to augmented webs}

We can now show that augmented webs are in bijection with weakly 3-noncrossing set partitions, $WNC(n,d,3)$. To define our bijection, we first formally define the arc diagram used in the proof of Proposition~\ref{tableauxbiject}. We call these $m$-diagrams, based on the objects of the same name developed by J. Tymoczko in \cite{Tymoczko}. 

\begin{definition}
Let $\pi \in WNC(n,d,3)$. To form the {\em $m$-diagram} for $\pi$, place the vertices 1 through $n$ equally spaced in a line, then for each block $B = \{b_1 < b_2 < \cdots < b_k\}$, draw a semicircular arc in the lower half plane from $b_1$ to $b_2$, and from $b_2$ to all other elements of $B$. We call the arc between $b_1$ and $b_2$ a first arc, and all other arcs second arcs. Note that the definition of weakly $3$-noncrossing guarantees that first arcs do not cross first arcs, and second arcs do not cross second arcs. For visual clarity, we will often color first arcs in red and second arcs in black. The name $m$-diagram is due to the fact that in Tymoczko's definition, blocks were always size three and thus had a unique second arc, so the diagram appeared visually as a number of intersecting $m$ shapes. 

The collection of first arcs and maximal second arcs of each block divide the lower half plane into a number of regions. We define the {\em depth} of each region to be the number of first arcs and maximal second arcs which the region lies above.
\end{definition}

\begin{example}
Let $\pi \in WNC(13,3,3)$ be the weakly 3-noncrossing set partition with three blocks, $\{\{1,4,6,7,8\}, \{2,3,9,10\}, \{5, 11,12,13\}\}$. The $m$-diagram associated to $\pi$ appears below.
\begin{center}
\begin{tikzpicture}[scale = .7]
\coordinate (p1) at (0,0);
\coordinate (p2) at (1,0);
\coordinate (p3) at (2,0);
\coordinate (p4) at (3,0);
\coordinate (p5) at (4,0);
\coordinate (p6) at (5,0);
\coordinate (p7) at (6,0);
\coordinate (p8) at (7,0);
\coordinate (p9) at (8,0);
\coordinate (p10) at (9,0);
\coordinate (p11) at (10,0);
\coordinate (p12) at (11,0);
\coordinate (p13) at (12,0);

\foreach \i in {1,...,13}{
\node at ($(p\i)+(0,.5)$) {$\i$};
}
\draw[thick] (p1)--(p13);
\draw[thick,red]  (p4) arc (0:-180: 1.5);
\draw[thick]  (p8) arc (0:-180:2);
\draw[thick,red]  (p3) arc (0:-180:.5); 
\draw[thick]  (p10) arc (0:-180:3.5); 
\draw[thick,red]  (p11) arc (0:-180:3); 
\draw[thick]  (p13) arc (0:-180:1); 
\draw[thick]  (p12) arc (0:-180:.5); 
\draw[thick]  (p9) arc (0:-180:3); 
\draw[thick]  (p6) arc (0:-180:1); 
\draw[thick]  (p7) arc (0:-180:1.5); 
\end{tikzpicture}
\end{center}
\end{example}

We can now define our bijection.

\begin{definition}
\label{bijectdef}
The function $\varphi:WNC(n,d,3) \rightarrow AW(n,d)$ is defined as follows:

Let $\pi \in WNC(n,d,3)$, and let $M$ be its $m$ diagram. For each block $B = \{b_1<b_2< \cdots <b_k\}$, introduce a black vertex $v_B$ slightly above $b_2$, connected to $b_2$ by an edge. In a small region around $b_2$, modify the arcs connecting to $b_2$ to instead connect to $v_B$. Then, for every pair of blocks, if the first arc of one crosses some of the second arcs of the other, replace a small region containg all intersections as shown in figure~\ref{replacementfig}.
\end{definition}

\begin{figure}[h]
\begin{center}

\begin{tikzpicture}[scale = .6]

\draw[thick,red] (0,2)--(2,0);
\draw[thick,black] (3, 2)--(1,0.1)--(3,3.42);

\draw[fill=black]  (1,0) circle (3pt);
\draw[fill=white]  (3,2) circle (3pt);
\draw[fill=white]  (3,3.42) circle (3pt);
\draw[fill=white]  (0,2) circle (3pt);
\draw[fill=black]  (2,0) circle (3pt);

\node at (3.5,.75) {$\rightarrow$};

\draw[thick, red] (4,2)--(5.5,1.6);
\draw[thick, black] (5,0)--(5.5,.8);
\draw[thick, blue] (5.5,.8)--(5.5,1.6);
\draw[thick, black] (7,2)--(5.5,1.6)--(7,3.42);
\draw[thick,red] (5.5,.8)--(6,0);

\draw[fill=white]  (5.5,.8) circle (3pt);
\draw[fill=black]  (5.5,1.6) circle (3pt);
\draw[fill=black]  (5,0) circle (3pt);
\draw[fill=white]  (7,2) circle (3pt);
\draw[fill=white]  (7,3.42) circle (3pt);
\draw[fill=white]  (4,2) circle (3pt);
\draw[fill=black]  (6,0) circle (3pt);
\end{tikzpicture}
\end{center}
\caption{The replacement operation used in the definition of $\varphi$. The first arc is depicted in red, and the second arcs are depicted in black.}
\label{replacementfig}
\end{figure}
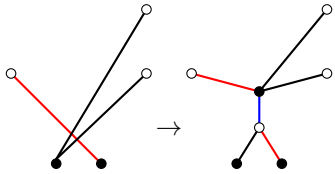

\begin{example}
Continuing our prior example of $\pi = \{1,4,6,7,8\}, \{2,3,9,10\}, \{5, 11,12,13\}$, the resulting web is depicted below.

\begin{center}

\begin{tikzpicture}
\coordinate (p1) at (0,0);
\coordinate (p2) at (1,0);
\coordinate (p3) at (2,0);
\coordinate (p4) at (3,0);
\coordinate (p5) at (4,0);
\coordinate (p6) at (5,0);
\coordinate (p7) at (6,0);
\coordinate (p8) at (7,0);
\coordinate (p9) at (8,0);
\coordinate (p10) at (9,0);
\coordinate (p11) at (10,0);
\coordinate (p12) at (11,0);
\coordinate (p13) at (12,0);

\foreach \i in {1,...,13}{
\node at ($(p\i)+(0,.5)$) {$\i$};
}
\draw[thick] (p1)--(p13);

\draw[thick,red]  (p1) arc (-180:-70: 1.5)--(2.19,-1.42);
\draw[thick,blue] (2.19,-1.42)--(2.28,-1.2);
\draw[thick,black] (p3) arc (-180:-163:4)--(2.28,-1.2);
\draw[thick,red] (p4) arc (0:-55:1.5)--(2.28,-1.2);

\draw[thick,red] (p2) arc (-180:0:.5);

\draw[thick,red] (p5) arc (-180:-144:3)--(4.82,-1.85);
\draw[thick,black] (p8) arc (0:-90:2)--(4.82,-1.85);
\draw[thick,black] (p4) arc (-180:-105:2)--(4.73,-2.05);
\draw[thick,blue] (4.73,-2.05)--(4.82,-1.85);

\draw[thick,blue] (7.4,-3.1)--(7.3,-2.85);
\draw[thick, black] (p10) arc (0:-53:3.5)--(7.3,-2.85);
\draw[thick,red] (p11) arc (0:-78:3)--(7.4,-3.1);
\draw[thick,red] (7.3,-2.85)--(7,-3) arc (-90:-135:3)--(4.73,-2.05);
\draw[thick,black] (5.5,-3.5) arc (-90:-150:3.5)--(2.19,-1.42);
\draw[thick,black] (5.5,-3.5) arc (-90:-75:3.5)--(7.4,-3.1);

\draw[thick,black] (p6)--(4.82,-1.85)--(p7);
\draw[thick,black] (p9)--(7.3,-2.85);
\draw[thick,black] (p11) arc (-180:0:.5);
\draw[thick,black] (p11) arc (-180:0:1);
\draw[fill = black] (2.19,-1.42) circle (2pt);
\draw[fill = white] (2.28,-1.2) circle (2pt);

\draw[fill = white] (4.73,-2.05) circle (2pt);
\draw[fill = black] (4.82,-1.85) circle (2pt);

\draw[fill = white] (7.4,-3.1) circle (2pt);
\draw[fill = black] (7.3,-2.85) circle (2pt);

\draw[thick,blue] (2,0)--(2,-.1);
\draw[fill = black] (2,-.1) circle (2pt);

\draw[thick,blue] (3,0)--(3,-.1);
\draw[fill = black] (3,-.1) circle (2pt);

\draw[thick,blue] (10,0)--(10,-.1);
\draw[fill = black] (10,-.1) circle (2pt);

\end{tikzpicture}
\end{center}
\end{example}

\begin{proposition}
The function $\varphi$ is well defined, i.e. if $\pi \in WNC(n,d,3)$, then $\varphi(\pi)$ is indeed in $AW(n,d)$ 
\end{proposition}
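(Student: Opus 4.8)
The plan is to verify the four defining conditions of an augmented web for $\varphi(\pi)$: that it is a normal plabic graph, that it has no black vertex of degree less than three, that its exceedance is $d$, and that it has no interior face of degree less than six. I would dispatch the first three by direct bookkeeping on the construction and reserve the real effort for the face condition.

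First I would confirm that $\varphi(\pi)$ is a genuine planar graph in the disk. By the definition of $3$-weakly noncrossing, first arcs never cross first arcs and second arcs never cross second arcs, so in the $m$-diagram the only crossings are between a first arc of one block and second arcs of another. Each maximal bundle of such crossings between a fixed first arc and a fixed block is confined to a small disk and is replaced by the local gadget of Figure~\ref{replacementfig}; taking these disks pairwise disjoint and noting that the replacement is itself planar shows that $\varphi(\pi)$ is planar with boundary vertices $1,\dots,n$ read clockwise. For normality I would inspect the gadget directly: every white vertex is created inside a gadget and has degree three, the colours alternate along each arc passing through a gadget, and the new (blue) edge joins the gadget's single black vertex to its single white vertex, so no two like-coloured vertices are ever adjacent. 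The boundary condition that each vertex $i$ meets only black vertices holds because $i$ is joined to the black vertex $v_B$ of its block before any replacement, and following an arc from its boundary endpoint inward to $v_B$ the gadgets insert vertices in black–white pairs, so the vertex adjacent to the boundary remains black.

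The exceedance and black-degree conditions are then immediate. There are exactly $d$ vertices $v_B$, one per block, and each gadget contributes exactly one black and one white vertex, so the number of black vertices minus the number of white vertices equals $d$. Each $v_B$ has degree $|B| \ge 3$ since every block has size at least $3$, and each gadget black vertex has degree at least three because at least one second arc passes through it in addition to the blue edge; hence no black vertex has degree less than three.

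The main obstacle is showing that $\varphi(\pi)$ has no interior face of degree $2$ or $4$; since the graph is bipartite, every interior face has even degree, so these are the only cases to exclude. I would identify the interior faces of $\varphi(\pi)$ with the regions that the arcs cut out of the lower half-disk, recording how each gadget subdivides the region around a crossing bundle. A degree-$2$ face would require two parallel edges between a single pair of gadget vertices, which the ``one region per crossing bundle'' convention of Definition~\ref{bijectdef} prevents. Excluding degree-$4$ faces is the delicate step, and is where the $3$-weakly noncrossing hypothesis enters essentially: a square face would be bounded by arcs emanating from at most two gadgets, or from a single gadget together with a vertex $v_B$, and in each configuration I would argue that the square forces either two first arcs to cross one another or two second arcs to cross one another — both forbidden by $3$-weak noncrossing — or else a pair of arcs sharing a common $v_B$ to bound a region that the construction excludes. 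Carrying out this local case analysis, over adjacent gadgets and over gadgets adjacent to some $v_B$, is the bulk of the work.
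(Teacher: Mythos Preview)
Your outline is correct and identifies the real content of the proposition: the only nontrivial condition is the absence of a degree-$4$ face, and everything else is bookkeeping. The paper agrees, dispatching normality, black-vertex degree, and the exceedance count in a single clause (``the other properties are clear'') and devoting the entire proof to the $4$-cycle.

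Where you and the paper diverge is in how the $4$-cycle is excluded. You propose a local case analysis over pairs of adjacent gadgets (and gadgets adjacent to a $v_B$), arguing in each case that closing up a square forces either two first arcs or two second arcs to cross. The paper instead gives a short parity argument: a $4$-cycle must use two first-arc edges and two second-arc edges (no blue edges); orient every arc in the $m$-diagram away from its $v_B$; at each gadget vertex the first-arc edge and the second-arc edges belong to different blocks, so the cycle is forced to be consistently oriented; this means the two first-arc segments are traversed in opposite directions, contradicting the fact that first arcs, being mutually noncrossing, are all oriented the same way. This orientation trick replaces your case analysis with a single global observation and is worth knowing. Your approach would also work, but note that you have only sketched the case analysis rather than executed it; the configurations involving a $v_B$ as one of the four cycle vertices, and the reason blue edges cannot participate, both deserve a sentence.
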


\begin{proof}
We need to check that the resulting graph does not have a cycle of length 4, the other properties are clear. A 4 cycle would necessarily have two edges coming from first arcs and two edge coming from second arcs, such that no new edges are created. Orienting all edges in the $m$-diagram away from $v_B$ for each block $B$, a 4 cycle would require that the arcs intersect with opposite orientations at each corner. But the the two first arcs would have to have different orientations, and this is not possible.
\end{proof}

\begin{lemma}
\label{depthcor}
Let $\pi \in WNC(n,d,3)$. The first arcs and maximal second arcs of the $m$ diagram for $\pi$ divide the half plane into a number of regions. There is a depth preserving correspondence between faces of $\varphi(\pi)$ and these regions. 
\end{lemma}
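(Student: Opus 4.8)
The plan is to define the correspondence directly from the construction of $\varphi$ in Definition~\ref{bijectdef}, and to prove that it preserves depth by identifying the curves that measure depth on each side. The crucial point is that both notions of depth are governed by exactly $d$ curves. On the $m$-diagram side, the first arcs and maximal second arcs assemble into $d$ \emph{block curves}: for each block $B = \{b_1 < \cdots < b_k\}$ let $\beta_B$ be the concatenation of the first arc $(b_1,b_2)$ with the maximal second arc $(b_2,b_k)$, a curve running from $\min B$ to $\max B$, and the depth of a region is the number of $\beta_B$ lying above it. On the web side, since $W = \varphi(\pi) \in AW(n,d)$ has exceedance $d$, its trip permutation has exactly $d$ exceedances, and the depth of a face is the number of exceedance-trips separating it from $f_0$. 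So the first step is to show that the $d$ exceedance-trips of $W$ are exactly the trips with $\mathrm{trip}(\min B) = \max B$, one per block, and that the curve each such trip traces is isotopic rel its boundary endpoints to $\beta_B$.

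To define the map, I would use that $\varphi$ alters the $m$-diagram only inside a collection of disjoint small neighborhoods: one around each $b_2$, where the incident arcs are rerouted to $v_B$, and one around each crossing cluster, where the replacement of Figure~\ref{replacementfig} is performed. Away from these neighborhoods, the edges of $W$ follow the original arcs of the $m$-diagram. Because $W$ has no faces of degree less than $6$, every face meets the complement of the neighborhoods; I would send a face $f$ of $W$ to the region of the $m$-diagram in which this part of $f$ lies, after checking that it lies in a single region, so that $\Phi(f)$ is well defined. This $\Phi$ is surjective and depth-preserving, but in general not injective: a non-maximal second arc becomes an edge $v_B b_j$ of $W$ that can split a single region into several faces, all of the same depth (the map is a bijection precisely in the pure $SL_3$-web case $n = 3d$, when no non-maximal second arcs occur).

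The core of the argument is depth preservation, which I would establish by the local analysis promised in the first step. At a vertex $v_B$, Lemma~\ref{depthboundaryvertex} guarantees exactly two incident depth boundary edges, and I would check these are precisely the first-arc edge $v_B b_1$ and the maximal-second-arc edge $v_B b_k$; thus the exceedance-trip through $v_B$ enters and leaves along $\beta_B$, while the remaining edges $v_B b_2, \dots, v_B b_{k-1}$ lie strictly inside $\beta_B$ and carry no depth change (consistent with the non-injectivity above). At each replacement gadget the blue edge is a depth boundary edge, and it records a single crossing of a first arc with a maximal second arc; I would verify that the exceedance-trip running over the gadget crosses the same block curves, in the same cyclic order, as $\beta_B$ crosses the corresponding arcs of the $m$-diagram. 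Granting these local matchings, crossing an exceedance-trip of $W$ corresponds to crossing a $\beta_B$ in the $m$-diagram, so a face and its region are separated from $f_0$ by the same number of curves and $\Phi$ preserves depth.

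The step I expect to be the main obstacle is the bookkeeping at the replacement gadgets: one must confirm that the resolution of Figure~\ref{replacementfig} routes the trips so that the first arc and the maximal second arc each yield a genuine depth boundary edge, that the non-maximal second arcs a first arc may cross contribute none, and that no new exceedance, roundtrip, or double-crossing is created. Here reducedness of $W$ (Proposition~\ref{reduced}) together with Lemma~\ref{nokiss}, which forbids trips leaving adjacent vertices from meeting again, are what let me pin down the global shape of each exceedance-trip from the purely local gadget data, and hence conclude that the $d$ exceedance-trips of $W$ are globally isotopic to the $d$ block curves of the $m$-diagram. One could alternatively run this as an induction on the number of white vertices using Lemma~\ref{induct}, peeling off one crossing gadget at a time; the local matchings above are exactly what make the inductive step go through.
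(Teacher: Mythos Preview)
Your proposal is correct and follows essentially the same route as the paper: both define the map by sending each face to the $m$-diagram region that contains it (using that the replacements of Definition~\ref{bijectdef} are $\varepsilon$-local), and both reduce depth-preservation to identifying the exceedance trips with the curves built from first arcs and maximal second arcs.

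The paper's argument is considerably shorter because it isolates one local observation you do not make explicit: at each replacement gadget, the newly introduced (blue) edge is traversed by the exceedance trips either twice (the two trips cross) or zero times (they turn away from each other). Either way, that edge is never a depth boundary edge, and so the depth boundary edges are exactly those coming from first arcs and maximal second arcs. Once you have that, depth preservation is immediate, with no need to invoke Proposition~\ref{reduced}, Lemma~\ref{nokiss}, or an induction via Lemma~\ref{induct}. Your use of Lemma~\ref{depthboundaryvertex} at $v_B$ is fine but also not essential; the local trip rules already pin down which two edges at $v_B$ are depth boundaries. Note also that your precise claim $\mathrm{trip}(\min B) = \max B$ is a bit stronger than what the paper asserts or needs: in the ``cross'' case the exceedance trip starting at $\min B$ may end at $\max B'$ for another block $B'$, but since only the union of exceedance trips matters for depth, this does not affect the conclusion. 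Your remark that $\Phi$ is surjective but not injective (non-maximal second arcs subdivide a region into several same-depth faces) is correct and sharper than the paper's use of the word ``correspondence.''
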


\begin{proof}
Each replacement can be made so that edges coming from first arcs and maximal second arcs stay the same except in a $\epsilon$ radius region around each intersection. Each face of $\varphi(\pi)$ is thus contained (except for an $\epsilon$-small portion) in a unique region. The trip starting at the first arc of each $m$ will be an exceedance of $\varphi(\pi)$, which travels left to right along first arcs, maximal second arcs, and edges introduced by intersection replacement steps. These trips either cross at each intersection, using the new edge introduced at that intersection twice, or turn at from eachother at each intersection, using the new edge introduced at that intersection 0 times. Thus, the depth boundary paths consist exactly of those edges which come from first arcs and maximal second arcs, and thus the depth of each face matches the depth of the region it is contained in.
\end{proof}

\begin{figure}[h]
\begin{center}
\begin{tikzpicture}
\coordinate (p1) at (0,0);
\coordinate (p2) at (1,0);
\coordinate (p3) at (2,0);
\coordinate (p4) at (3,0);
\coordinate (p5) at (4,0);
\coordinate (p6) at (5,0);
\coordinate (p7) at (6,0);
\coordinate (p8) at (7,0);
\coordinate (p9) at (8,0);
\coordinate (p10) at (9,0);
\coordinate (p11) at (10,0);
\coordinate (p12) at (11,0);
\coordinate (p13) at (12,0);

\foreach \i in {1,...,13}{
\node at ($(p\i)+(0,.5)$) {$\i$};
}

\fill[gray!30] (p4) arc (0:-180: 1.5);
\fill[gray!30] (p3) arc (0:-180:.5);
\fill[gray!30] (p11) arc (0:-180:3);
\fill[gray!30] (p4) arc (0:-180: 1.5);
\fill[gray!30] (p13) arc (0:-180:1);
\fill[gray!30] (p3) arc (-180:0:3.5);
\fill[gray!60] (p2) arc (-180:0:.5);
\fill[gray!60] (p3) arc (-180:-158:3.5)--(p4);
\fill[gray!60] (p4) arc (0:-60:1.5)--(p3);
\fill[gray!60] (p4) arc (-180:0:2);
\fill[gray!60] (p5) arc (-180:-87:3)--(p10);
\fill[gray!60] (p10) arc (0:-60:3.5)--(p5);
\fill[gray!90] (p5) arc (-180: -138: 3)--(p8);
\fill[gray!90] (p8) arc (0:-97:2)--(p5);

\draw[thick] (p1)--(p13);
\draw[red,thick]  (p4) arc (0:-180: 1.5);
\draw[thick]  (p8) arc (0:-180:2);
\draw[red,thick]  (p3) arc (0:-180:.5); 
\draw[thick]  (p10) arc (0:-180:3.5); 
\draw[red,thick]  (p11) arc (0:-180:3); 
\draw[thick]  (p13) arc (0:-180:1);

\draw[thick, fill = gray!30] (10,-4)--(10.3,-4)--(10.3,-3.7)--(10,-3.7)--(10,-4);
\draw[thick, fill = gray!60] (10,-3)--(10.3,-3)--(10.3,-2.7)--(10,-2.7)--(10,-3);
\draw[thick, fill = gray!90] (10,-2)--(10.3,-2)--(10.3,-1.7)--(10,-1.7)--(10,-2);
\node at (11.3,-3.8) {Depth 1};
\node at (11.3,-2.8) {Depth 2};
\node at (11.3,-1.8) {Depth 3};
\end{tikzpicture}

\begin{tikzpicture}
\coordinate (p1) at (0,0);
\coordinate (p2) at (1,0);
\coordinate (p3) at (2,0);
\coordinate (p4) at (3,0);
\coordinate (p5) at (4,0);
\coordinate (p6) at (5,0);
\coordinate (p7) at (6,0);
\coordinate (p8) at (7,0);
\coordinate (p9) at (8,0);
\coordinate (p10) at (9,0);
\coordinate (p11) at (10,0);
\coordinate (p12) at (11,0);
\coordinate (p13) at (12,0);

\foreach \i in {1,...,13}{
\node at ($(p\i)+(0,.5)$) {$\i$};
}
\draw[thick] (p1)--(p13);

\fill[gray!30] (p1) arc (-180:-70: 1.5)--(2.19,-1.42) -- (p1)--(p11) arc (0:-78:3)--(7.4,-3.1)--(5.5,-3.5) arc (-90:-75:3.5)--(7.4,-3.1)--(5.5,-3.5) arc (-90:-150:3.5)--(2.19,-1.42);
\fill[gray!30] (p11) arc (-180:0:1);
\fill[gray!60] (p2) arc (-180:0:.5);
\fill[gray!60] (p3) arc (-180:-163:4)--(2.28,-1.2)--(p3)--(p4) arc (0:-55:1.5)--(2.28,-1.2);
\fill[gray!60] (p4) arc (-180:-105:2)--(4.73,-2.05) -- (p4)--(p10) arc (0:-53:3.5)--(7.3,-2.85)--(7,-3) arc (-90:-135:3)--(4.73,-2.05);
\fill[gray!90] (p5) arc (-180:-144:3)--(4.82,-1.85)--(p5)--(p8) arc (0:-90:2)--(4.82,-1.85);

\draw[thick,red]  (p1) arc (-180:-70: 1.5)--(2.19,-1.42);
\draw[thick,blue] (2.19,-1.42)--(2.28,-1.2);
\draw[thick,black] (p3) arc (-180:-163:4)--(2.28,-1.2);
\draw[thick,red] (p4) arc (0:-55:1.5)--(2.28,-1.2);

\draw[thick,red] (p2) arc (-180:0:.5);

\draw[thick,red] (p5) arc (-180:-144:3)--(4.82,-1.85);
\draw[thick,black] (p8) arc (0:-90:2)--(4.82,-1.85);
\draw[thick,black] (p4) arc (-180:-105:2)--(4.73,-2.05);
\draw[thick,blue] (4.73,-2.05)--(4.82,-1.85);

\draw[thick,blue] (7.4,-3.1)--(7.3,-2.85);
\draw[thick, black] (p10) arc (0:-53:3.5)--(7.3,-2.85);
\draw[thick,red] (p11) arc (0:-78:3)--(7.4,-3.1);
\draw[thick,red] (7.3,-2.85)--(7,-3) arc (-90:-135:3)--(4.73,-2.05);
\draw[thick,black] (5.5,-3.5) arc (-90:-150:3.5)--(2.19,-1.42);
\draw[thick,black] (5.5,-3.5) arc (-90:-75:3.5)--(7.4,-3.1);

\draw[thick,black] (p6)--(4.82,-1.85)--(p7);
\draw[thick,black] (p9)--(7.3,-2.85);
\draw[thick,black] (p11) arc (-180:0:.5);
\draw[thick,black] (p11) arc (-180:0:1);
\draw[fill = black] (2.19,-1.42) circle (2pt);
\draw[fill = white] (2.28,-1.2) circle (2pt);

\draw[fill = white] (4.73,-2.05) circle (2pt);
\draw[fill = black] (4.82,-1.85) circle (2pt);

\draw[fill = white] (7.4,-3.1) circle (2pt);
\draw[fill = black] (7.3,-2.85) circle (2pt);

\draw[thick,blue] (2,0)--(2,-.1);
\draw[fill = black] (2,-.1) circle (2pt);

\draw[thick,blue] (3,0)--(3,-.1);
\draw[fill = black] (3,-.1) circle (2pt);

\draw[thick,blue] (10,0)--(10,-.1);
\draw[fill = black] (10,-.1) circle (2pt);

\draw[thick, fill = gray!30] (10,-4)--(10.3,-4)--(10.3,-3.7)--(10,-3.7)--(10,-4);
\draw[thick, fill = gray!60] (10,-3)--(10.3,-3)--(10.3,-2.7)--(10,-2.7)--(10,-3);
\draw[thick, fill = gray!90] (10,-2)--(10.3,-2)--(10.3,-1.7)--(10,-1.7)--(10,-2);
\node at (11.3,-3.8) {Depth 1};
\node at (11.3,-2.8) {Depth 2};
\node at (11.3,-1.8) {Depth 3};
\end{tikzpicture}
\end{center}
\caption{An example of the correspondence between $m$-diagram depth and augmented web depth. Above, an $m$-diagram with non-maximal second arcs removed and regions shaded by depth. Below, the corresponding augmented web with first and maximal second arcs mostly preserved, and faces shaded by depth.}
\label{depthcorfig}
\end{figure}
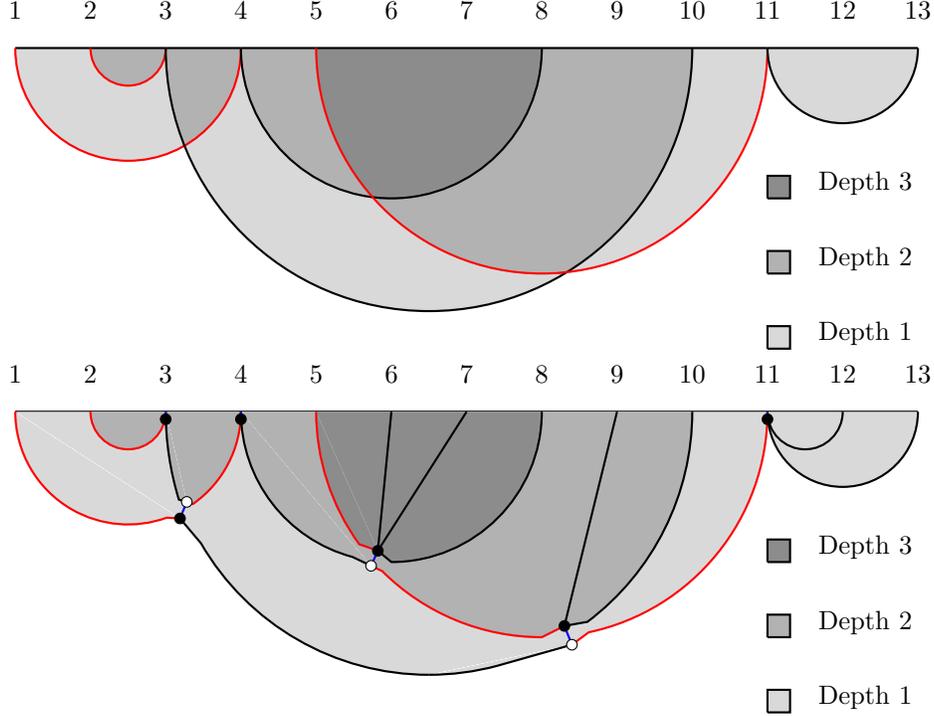

\begin{theorem}
\label{biject}
The function $\varphi: WNC(n,d,3) \rightarrow AW(n,d)$ is a bijection. 
\end{theorem}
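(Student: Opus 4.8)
The plan is to prove the bijection by constructing an explicit inverse $\psi : AW(n,d) \to WNC(n,d,3)$. Since $\varphi$ is already known to be well defined, and since Proposition~\ref{tableauxbiject} gives $|WNC(n,d,3)| = |SYT(d^3,1^{n-3d})|$, the cleanest route is to recover $\pi$ from $W = \varphi(\pi)$ intrinsically. The central reduction is Corollary~\ref{setsunique}: a member of $WNC(n,d,3)$ is completely determined by the three sets $R_1$ (smallest elements of blocks), $R_2$ (second‑smallest elements), and $R_3$ (largest elements). So it suffices to read these three sets off an augmented web and then check that the resulting set partition is both $3$-weakly noncrossing and mapped back to $W$ by $\varphi$.

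First I would recover $R_1$ and $R_3$ from the trip permutation. By the remark following the definition of augmented webs, $W$ has exactly $d$ exceedances, and by the analysis in the proof of Lemma~\ref{depthcor} each block $B$ of $\pi$ contributes precisely one exceedance, whose trip travels left to right along the first arc and the maximal second arc of the $m$-diagram, hence from $b_1$ to $b_k$. Therefore the set of starting points of exceedance trips is exactly $R_1$ and the set of ending points is exactly $R_3$, and both are determined by $W$ alone.

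Next I would recover $R_2$. The crossing‑replacement gadgets of Figure~\ref{replacementfig} introduce only \emph{interior} black and white vertices, so the black vertices adjacent to the boundary are exactly the hub vertices $v_B$, and the stem edge $b_2$–$v_B$, being neither a first arc nor a second arc, survives $\varphi$ unchanged and is not a depth boundary edge. By Lemma~\ref{depthboundaryvertex} the two depth boundary edges at $v_B$ are the incoming first arc and the outgoing maximal second arc; using the planar (cyclic) arrangement of edges around $v_B$ one singles out the stem among the remaining edges and reads off its boundary endpoint $b_2$, giving $R_2$. Together with Corollary~\ref{setsunique} this produces a candidate set partition $\psi(W)$, and I would check that the three sets obtained always arise from a tableau in $T(n,d,3)$, so that $\psi(W) \in WNC(n,d,3)$.

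The final and main step is to verify that $\psi$ genuinely inverts $\varphi$. The identity $\psi(\varphi(\pi)) = \pi$ is immediate from the explicit description of $\varphi(\pi)$, but $\varphi(\psi(W)) = W$ for \emph{every} augmented web $W$ is the real content, as it is equivalent to surjectivity: one must show that an arbitrary $W \in AW(n,d)$ actually decomposes into hubs, stems, first arcs, maximal and non‑maximal second arcs, and crossing‑replacement gadgets — i.e. literally lies in the image of $\varphi$ — and that unwinding each gadget (tracing each strand through the local configuration of Figure~\ref{replacementfig}) is unambiguous. I expect this reconstruction/surjectivity step to be the principal obstacle. It should follow from the structural constraints already assembled: Lemma~\ref{nokiss} and Lemma~\ref{depthboundaryvertex} rule out the trip behavior that would obstruct a clean strand‑tracing, while the depth correspondence of Lemma~\ref{depthcor} guarantees the depth boundary paths are exactly the first and maximal second arcs, pinning down the gadget locations. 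If a direct decomposition proves awkward, a fallback is an induction on the number of white vertices, peeling off a black–white pair via the operation furnished by Lemma~\ref{induct} and tracking the effect on the associated set partition.
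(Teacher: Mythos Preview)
Your approach has a concrete error in the recovery of $R_2$. You write that ``the crossing-replacement gadgets of Figure~\ref{replacementfig} introduce only \emph{interior} black and white vertices, so the black vertices adjacent to the boundary are exactly the hub vertices $v_B$.'' The first clause is true but the inference fails: the new black vertex introduced at a crossing, while interior, inherits the tails of the second arcs that passed through that crossing, and those tails terminate at boundary vertices. In the paper's running example $\pi=\{\{1,4,6,7,8\},\{2,3,9,10\},\{5,11,12,13\}\}$, boundary vertex $1$ is adjacent to a crossing-introduced black vertex, not to any hub. So ``adjacent to the boundary'' does not distinguish hubs from crossing black vertices, and your scheme for locating the stem edges and reading off $b_2$ does not work as stated. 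One cannot easily repair this by asking for a boundary edge that is not a depth boundary: non-maximal second arcs are also non-depth-boundary boundary edges, and both hubs and crossing black vertices can carry them.

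More fundamentally, even if you had $R_1,R_2,R_3$ correctly, the step you yourself flag as ``the real content'' --- showing that an arbitrary $W\in AW(n,d)$ actually decomposes into hubs, stems, first and second arcs, and crossing gadgets, so that $\varphi(\psi(W))=W$ --- is exactly where the difficulty lives, and invoking Lemmas~\ref{depthboundaryvertex} and~\ref{nokiss} does not by itself produce such a decomposition. The paper takes a different route that handles both problems at once: rather than recovering only the three row-sets, it introduces the notion of a \emph{valid coloring} of $W$ (red for right-to-left depth boundaries, blue for the stem/crossing edges, black otherwise, subject to local conditions) and proves that every augmented web admits exactly one valid coloring via a forced step-by-step algorithm. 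Since $\varphi$ visibly produces a valid coloring, and since the blue interior edges record precisely which crossing replacements were made, uniqueness of the coloring simultaneously gives injectivity and the full decomposition needed for surjectivity. Your fallback idea of inducting on white vertices via Lemma~\ref{induct} is closer in spirit to what can be made to work, but as written it is not a proof.
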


\begin{proof}
To show that $\varphi$ is invertible, we introduce the following definition. The idea is that we will record extra information in the process of applying $\varphi$ by way of coloring the edges. This extra information will allow us to invert the $\varphi$ map. We will then show that the extra information was redundant, so $\varphi$ is invertible.
\begin{definition}
Given an augmented web $W \in AW(n,d)$, a {\em valid coloring} of $W$ is a coloring of the edges of $W$ with three colors, red, blue, and black such that the following conditions are satisfied:
\begin{enumerate}
\item Every interior vertex is incident to exactly one red edge, exactly one blue edge, and at least one black edge. Additionally, at each interior vertex, the incident red edge shares a face with the incident blue edge.
\item Right-to-left depth boundaries are colored red.
\item Left-to-right depth boundary edges incident to a boundary vertex are colored black.
\item No face has three consecutive edges colored red-blue-red.
\end{enumerate}
Note that we do not require this coloring to be proper, a vertex may have multiple black edges incident. 
\end{definition}

Given a weakly 3-noncrossing set partition $\pi\in WNC(n,d,3)$, we can create a valid coloring of $\varphi(\pi)$ by initially coloring first arcs red, second arcs black, and for each block $b$, the connection between $v_b$ and $b_2$ blue. Then, at each replacement step, color the newly introduced edge blue and preserve all other colors. To see that the coloring obtained is a valid coloring, first note that initially and at each replacement step, the created vertices satisfy property 1 of a valid coloring. Property 2 of valid colorings holds by Lemma~\ref{depthcor}. To see that the third property of valid colorings holds, note that initially no such face exists and at no replacement step could such a face be created.

The interior blue edges contain the information of exactly which replacement steps have been performed, so given an augmented web $\varphi(\pi)$ and the valid coloring obtained through $\varphi$, we can recover $\pi$. Therefore, it suffices to show that every augmented web $w$ admits exactly one valid coloring.

To do so, we will give an algorithm for finding a valid coloring and show that each step is forced. Let $W$ be an augmented web. 

\begin{enumerate}

\item Consider the set of all right-to-left depth boundaries. In order to satisfy condition 2 of a valid coloring, we must color all such faces red. By Lemma~\ref{depthboundaryvertex}, every vertex now has exactly one red edge. Similarly, color all left-to-right depth boundary edges which are incident to a boundary vertex black.

\item Every interior vertex is now incident to a red edge, so all remaining edges must be blue or black. Edges which do not share a face with the red edge at each of their vertices must be black by condition 1 of valid colorings, so color all such edges black.

\item Consider the set of yet uncolored edges adjacent to two red edges on the same face. By condition 3 of valid colorings, these edges must be black, so color all such edges black.

\item Consider the set of yet uncolored edges. Every interior vertex is incident to at most two of these edges, so their union consists of a set of disjoint paths and cycles. We claim that their union is in fact a disjoint union of paths with exactly one interior endpoint and odd length paths with two interior endpoints.
Given this claim, there is exactly one way to satisfy condition 1 of valid colorings by coloring some of these edges blue and the rest black, that is, by coloring edges of a path alternating blue and black, starting at an interior endpoint of the path. So, if the claim holds, we are done.

\end{enumerate}

To see that the claim holds, suppose towards a contradiction there is a cycle $C$ among the uncolored edges after step 3. Let $D_k$ be the maximal $k$ depth boundary path sharing a vertex with $C$. Then $D_k$ shares exactly one edge with $C$, as the uncolored edges incident to $v$ cannot be on the same side of the depth boundary path passing through $v$. But this is a contradiction as that edge would have been colored black at step 3. Now suppose there is an even length path whose endpoints are both interior vertices. The path must be as shown below, though possibly longer or with the colors of vertices and trips reversed.
 \begin{center}
 \begin{tikzpicture}
 \draw[ultra thick,lightgray] (0,0)--(1,1)--(2,0)--(3,1)--(4,0)--(5,1)--(6,0);
 \draw[ultra thick,black] (0,0)--(1,1);
 \draw[ultra thick,black] (5,1)--(6,0);
 \draw[ultra thick,black] (0,-1)--(0,0);
 \draw[ultra thick,black] (6,-1)--(6,0);
 \draw[ultra thick,red] (0,0)--(-1,1);
 \draw[ultra thick,red] (1,1)--(1,2);
 \draw[ultra thick,red] (3,1)--(3,2);
 \draw[ultra thick,red] (6,0)--(7,1);
 \draw[ultra thick,red] (5,1)--(5,2);
 \draw[ultra thick,red] (4,0)--(4,-1);
 \draw[ultra thick,red] (2,0)--(2,-1);
 \draw[ultra thick,black] (1.4,2)--(2,0)--(2.6,2);
 \draw[ultra thick,black] (4,2)--(4,0);
 \draw[thick, orange, ->] plot [smooth] coordinates {(4.2,-1) (4.2,-.2) (5,1) (5.2,2)};
 \draw[thick, orange, ->] plot [smooth] coordinates {(2.2,-1) (2.2,-.2) (3,1) (3.2,2)};
 \draw[thick, orange, ->] plot [smooth] coordinates {(0.2,-1) (0.2,-.2) (1,1) (1.2,2)};
 \draw[thick, orange, ->] plot [smooth] coordinates {(-1,.8) (-.2,-.2) (-.2,-1)};
 \draw[thick, blue, ->] plot [smooth] coordinates {(4.8,2) (5,1) (5.8,0) (5.8,-1)};
 \draw[thick, blue, ->] plot [smooth] coordinates {(2.8,2) (3,1) (3.8,0) (3.8,-1)};
 \draw[thick, blue, ->] plot [smooth] coordinates {(0.8,2) (1,1) (1.8,0) (1.8,-1)};
 \draw[thick, blue, ->] plot [smooth] coordinates {(6.2,-1) (6.2,-.2) (7,.8)};
 \draw[fill = black] (0,0) circle (2pt);
 \draw[fill = black] (2,0) circle (2pt);
 \draw[fill = black] (4,0) circle (2pt);
 \draw[fill = black] (6,0) circle (2pt);
 \draw[fill = white] (1,1) circle (2pt);
 \draw[fill = white] (3,1) circle (2pt);
 \draw[fill = white] (5,1) circle (2pt);
 \end{tikzpicture}
 \end{center}
 For the red edges to have been colored red in step 1, the trips drawn in orange must be exceedances, and the trip drawn in blue must not be. The two rightmost blue trips cannot cross more than once as $W$ is reduced, so the face between $n$ and $1$ must lie to the right of the second rightmost blue trip. By Lemma~\ref{nokiss}, the leftmost orange trip and the three leftmost blue trips cannot cross eachother, and since $W$ is reduced, the leftmost orange trip cannot cross itself. Thus, the leftmost orange trip cannot be an exceedance, but this is a contradiction. Lastly, suppose there is a boundary to boundary path among the uncolored edges. Consider the two black vertices incident to the endpoints of this path. The red edges adjacent to them must lie on the same side of the path, so one of the edges along this path incident to the boundary must be a left-to-right depth boundary edge, and this is a contradiction.

 Thus, every augmented web has exactly one valid coloring, and $\varphi$ is a bijection.

\end{proof}
\section{$SL_3$ invariants for normal plabic graphs}

In this section we introduce an invariant associated to each perfectly orientable normal plabic graph in such a way that the invariants associated to augmented webs form a basis of $S^{(d^3, 1^{n-3d})}$. To do so, we first need to give an orientation to each plabic graph, which will determine the sign of our invariant.

\subsection{Perfect orientations}
A key idea in defining our augmented web invariants is that of a {\em perfect orientation}. Perfect orientations were introduced by A. Postnikov in \cite{Postnikov}. Our definition will be slightly different in that our sink vertices will be interior vertices rather than boundary vertices, and we also include the information of a total order on the sinks.
\begin{definition}
Let $W$ be a normal plabic graph. A perfect orientation $\mathcal{O}$ of $W$ is a choice of two things. First, an orientation of each edge of $W$ such that each boundary edge is oriented away from the boundary, each interior white vertex has exactly one ingoing edge, each interior black vertex has {\em at most} one outgoing edge. There will then be a set of $d$ black vertices with no outward edges, we refer to these as the sinks of $\mathcal{O}$ and denote them by $S_{\mathcal{O}}$. We also require that for every vertex $v$ in $W$ there is a directed path from $v$ to a sink vertex. A perfect orientation also includes the information of a total order on the sinks, i.e. a bijection $f_\mathcal{O}: S_{\mathcal{O}} \rightarrow  \{1, \dots, d\}$. For each perfect orientation, we call the set of edges which are oriented away from black vertices the independent set of $\mathcal{O}$ and denote it $I(\mathcal{O})$.

If there exists at least one perfect orientation of $W$, we say that $W$ is perfectly orientable.
\end{definition}

\begin{remark}
The set of perfectly orientable plabic under our definition differs slightly from the set of perfectly orientable plabic graphs under Postnikov's definition. For example, a white vertex connected by three edges to a single black vertex which is also connected to the boundary is perfectly orientable under Postnikov's definition but not ours. However, every plabic graph with an acyclic perfect orientation per Postnikov's definition will be perfectly orientable per our definition, and Postnikov, Speyer, and Williams show that all reduced plabic graphs have an acyclic perfect orientation \cite[Lemma 3.2]{PSW}.  
\end{remark}

We use this modified definition in order to allow for perfect orientations to be obtained from eachother via a sequence of small changes. 

\begin{definition}
Let $W$ be an augmented web with perfect orientation $\mathcal{O}$. Let $v$ be a white vertex of $W$. A {\em swivel move} is a change in orientation of exactly one ingoing edge and one outgoing edge at $v$ which connect to distinct black vertices. We have necessarily removed one sink vertex and added one sink vertex, let the new sink be in the same position in the total order as the old sink. We call this a swivel move due to the fact that the set $I(\mathcal{O})$ is being rotated around this white vertex.
\end{definition}

\begin{proposition}
\label{swiveldecomp}
Any two perfect orientations can be transformed into each other via a sequence of swivel moves and a reordering of the sink vertices.
\end{proposition}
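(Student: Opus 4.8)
The plan is to pass from perfect orientations to their independent sets and argue entirely at the level of matchings. First I would record that $I(\mathcal{O})$ is a white-saturating matching of the black--white edges: each white vertex, being trivalent with a unique ingoing edge, meets exactly one edge of $I(\mathcal{O})$, while each black vertex meets at most one (its unique outgoing edge), and the sinks are precisely the unmatched black vertices. Conversely any such matching induces the required vertex-degree conditions, and a swivel at a white vertex $v$ is exactly the operation of rematching $v$ from its current partner $b_1$ to a currently unmatched (hence sink) neighbor $b_2$, after which $b_1$ becomes a sink. Since the proposition permits a final reordering of sinks, and since a swivel carries the sink-order along (the new sink inheriting the old sink's position), it suffices to connect the underlying orientations by swivels and then reorder.

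The one genuine preliminary is that a swivel sends a perfect orientation to a perfect orientation; the only condition that is not immediate is that every vertex still reaches a sink. I would prove this by contradiction: if after the swivel some vertices cannot reach a sink, let $U$ be the set of all such vertices, so $U$ is nonempty, contains no new sink, and has no edge leaving it. A short check shows $v \notin U$ (otherwise the new sink $b_1 \in U$) and $b_2 \notin U$ (otherwise $v \in U$); as the only vertices whose outgoing edges changed are $v, b_1, b_2$, the set $U$ is then also closed under the \emph{old} orientation and still contains no old sink, contradicting reachability for $\mathcal{O}$. Thus reachability is automatically preserved, and every swivel in what follows is a legitimate move between perfect orientations.

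Next I would analyze $I(\mathcal{O}) \triangle I(\mathcal{O}')$. As a symmetric difference of two matchings it is a vertex-disjoint union of alternating paths and cycles; because every white vertex is saturated by both matchings it has degree $0$ or $2$ here, so all path endpoints are black vertices, each a sink of exactly one of the two orientations. The crucial observation is that an alternating cycle in this symmetric difference, oriented according to $\mathcal{O}$, is a \emph{directed} cycle of $\mathcal{O}$ (matched edges run black-to-white and unmatched edges white-to-black, and these fit together consistently around the cycle), and it is a directed cycle of $\mathcal{O}'$ traversed the other way. I would therefore fix one acyclic perfect orientation $\mathcal{O}_0$ --- which exists because $W$ is reduced (Proposition~\ref{reduced}), via the Postnikov--Speyer--Williams result discussed in the remark above --- and connect an arbitrary $\mathcal{O}$ to it. Against an acyclic reference the observation forbids cycles outright, so $I(\mathcal{O}) \triangle I(\mathcal{O}_0)$ is a disjoint union of alternating paths, each running from a sink of $\mathcal{O}$ to a sink of $\mathcal{O}_0$. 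I would resolve each such path by zipping: starting at the white vertex adjacent to the $\mathcal{O}$-sink endpoint, rematch each successive white vertex to its $\mathcal{O}_0$-partner, the required target black vertex having just been freed by the previous swivel (the first target being the initial sink). This brings the path into agreement with $\mathcal{O}_0$ without disturbing any other component, so after finitely many swivels $\mathcal{O}$ reaches $\mathcal{O}_0$; any two orientations are then joined through $\mathcal{O}_0$, and a final reordering of sinks finishes the argument.

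The main obstacle is the treatment of cycles, which the acyclic reference is engineered to remove, so that the substantive work concentrates in two places. The first is the reachability lemma of the second paragraph, which is what makes the zipping swivels legal moves. The second, and the point I expect to be the more delicate, is securing an acyclic perfect orientation \emph{in our interior-sink sense}: the Postnikov--Speyer--Williams orientations have boundary sinks, and I would need to verify that the conversion to interior sinks described in the remark can be performed without creating a directed cycle (equivalently, that $W$ admits a white-saturating matching whose induced orientation is acyclic). Should that prove awkward, the fallback is to resolve symmetric-difference cycles directly: reachability supplies a directed path from a cycle to a sink, which lets one free a black vertex on the cycle, rotate the cycle into agreement with $\mathcal{O}'$, and thereby trade one cycle for a path; tracking the pair (number of cycles, size of the symmetric difference) lexicographically then shows the process terminates, at the price of considerably more bookkeeping than the clean acyclic-reference route.
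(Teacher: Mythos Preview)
Your proposal is correct, and your fallback is essentially what the paper does. The paper's proof goes straight to the cycle-resolution argument: it inducts on the number of cycles in $I(\mathcal{O}_1)\triangle I(\mathcal{O}_2)$, and when a cycle is present it uses reachability to find an alternating walk from a sink of $\mathcal{O}_1$ to a white vertex on the cycle, around the cycle, and back along the same path; swiveling along this walk absorbs the cycle and decreases the count. There is no detour through an acyclic reference orientation.

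What your primary route buys is conceptual cleanliness (no cycles to resolve) at the cost of a genuine extra lemma: that an acyclic perfect orientation \emph{in the paper's interior-sink sense} exists. You correctly flag this as the delicate point, and it is not addressed anywhere in the paper; moreover, Proposition~\ref{swiveldecomp} is later invoked (via Proposition~\ref{compatsame}) for arbitrary perfectly orientable normal plabic graphs, not only augmented webs, so appealing to Proposition~\ref{reduced} would not cover all the cases you need. The paper's direct induction sidesteps this entirely, needing only the reachability clause already built into the definition of perfect orientation. On the other hand, your second paragraph---showing that a swivel actually lands in the class of perfect orientations by checking reachability is preserved---fills a gap the paper leaves implicit, and is worth keeping regardless of which route you take.
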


\begin{proof}
Let $\mathcal{O}_1$ and $\mathcal{O}_2$ be two perfect orientations. Consider the symmetric difference of independent sets of the two perfect orientations, $I(\mathcal{O}_1) \Delta I(\mathcal{O}_2)$ it is necessarily a union of disjoint cycles and paths between sinks of $\mathcal{O}_1$ and sinks of $\mathcal{O}_2$. We induct on the number of cycles present in $I(\mathcal{O}_1) \Delta I(\mathcal{O}_2)$. If there are no cycles, performing a swivel move at each white vertex along the paths from sinks of $\mathcal{O}_1$ and $\mathcal{O}_2$ using the edges of this path will transform $\mathcal{O}_1$ into a perfect orientation the same as $\mathcal{O}_2$ up to a reordering of its sinks. If there is a cycle in $I(\mathcal{O}_1) \Delta I(\mathcal{O}_2)$, find a walk in $W$ which starts at a sink vertex of $\mathcal{O}_1$, travels to a white vertex of the cycle, travels around the cycle, then returns to the starting sink via the same path such that every other edge of this walk is in $I(\mathcal{O}_1)$. Performing a swivel move at each white vertex of this walk using the edges along this walk will result in a perfect orientation $\mathcal{O}_3$ for which $I(\mathcal{O}_3) \Delta I(\mathcal{O}_2)$ has one fewer cycles.
\end{proof}

We first check that this is a sensible definition of sign.
\begin{proposition}
\label{signagree}
Let $\mathcal{O}_1, \mathcal{O}_2, \mathcal{O}_3$ be three perfect orientations of a web $W \in AW(n,d)$. Then we have
\[
\textrm{sign}(\mathcal{O}_1, \mathcal{O}_3) = \textrm{sign}(\mathcal{O}_1, \mathcal{O}_2) \textrm{sign}(\mathcal{O}_2, \mathcal{O}_3)
\]
\end{proposition}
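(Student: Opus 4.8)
The plan is to prove the identity by reducing, via Proposition~\ref{swiveldecomp}, to the case in which $\mathcal{O}_3$ is obtained from $\mathcal{O}_2$ by a single swivel move or a single transposition of two adjacent sinks, and then to verify that base case by a local computation. The conceptual tool behind everything is the elementary identity of independent sets
\[
I(\mathcal{O}_1)\,\Delta\,I(\mathcal{O}_3) = \bigl(I(\mathcal{O}_1)\,\Delta\,I(\mathcal{O}_2)\bigr)\,\Delta\,\bigl(I(\mathcal{O}_2)\,\Delta\,I(\mathcal{O}_3)\bigr),
\]
which holds because symmetric difference is associative with $A\,\Delta\,A=\emptyset$. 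Recall that each $I(\mathcal{O}_i)\,\Delta\,I(\mathcal{O}_j)$ is a disjoint union of cycles and of paths joining the sinks of $\mathcal{O}_i$ to the sinks of $\mathcal{O}_j$; the paths give a bijection $\beta_{ij}:S_{\mathcal{O}_i}\to S_{\mathcal{O}_j}$, which together with the orderings $f_{\mathcal{O}_i},f_{\mathcal{O}_j}$ determines the permutation of $\{1,\dots,d\}$ whose sign governs $\textrm{sign}(\mathcal{O}_i,\mathcal{O}_j)$.

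For the reduction I would induct on the length of a sequence of swivel moves and adjacent sink-transpositions carrying $\mathcal{O}_2$ to $\mathcal{O}_3$, which exists by Proposition~\ref{swiveldecomp}. The length-zero case is $\mathcal{O}_2=\mathcal{O}_3$, where $I(\mathcal{O}_2)\,\Delta\,I(\mathcal{O}_2)=\emptyset$ forces $\textrm{sign}(\mathcal{O}_2,\mathcal{O}_2)=1$ and the claim is immediate. For the inductive step, let $\mathcal{O}_2'$ be the orientation reached after the first move, so that $\mathcal{O}_2\to\mathcal{O}_2'$ is a single move and $\mathcal{O}_2'\to\mathcal{O}_3$ has shorter length. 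Applying the base case to the triple $(\mathcal{O}_1,\mathcal{O}_2,\mathcal{O}_2')$ and the inductive hypothesis to $(\mathcal{O}_1,\mathcal{O}_2',\mathcal{O}_3)$ and to $(\mathcal{O}_2,\mathcal{O}_2',\mathcal{O}_3)$ gives
\[
\textrm{sign}(\mathcal{O}_1,\mathcal{O}_3)=\textrm{sign}(\mathcal{O}_1,\mathcal{O}_2)\,\textrm{sign}(\mathcal{O}_2,\mathcal{O}_2')\,\textrm{sign}(\mathcal{O}_2',\mathcal{O}_3)=\textrm{sign}(\mathcal{O}_1,\mathcal{O}_2)\,\textrm{sign}(\mathcal{O}_2,\mathcal{O}_3),
\]
so the whole statement follows once the base case is established.

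The base case is where the real work lies. When the move is a transposition of two adjacent sinks it multiplies the sink-permutation attached to $\textrm{sign}(\mathcal{O}_1,\cdot)$ by a transposition, flipping its sign by exactly the factor $\textrm{sign}(\mathcal{O}_2,\mathcal{O}_3)=-1$, so this case is routine. The genuine obstacle is the swivel move: it flips only the two edges at one white vertex $v$, so in the overlay of the decompositions of $I(\mathcal{O}_1)\,\Delta\,I(\mathcal{O}_2)$ and $I(\mathcal{O}_2)\,\Delta\,I(\mathcal{O}_3)$ the cancellation happens entirely at $v$, where a path may be lengthened, shortened, split, or joined to a cycle. I expect the main difficulty to be checking that whatever contribution the definition assigns to the cycle component behaves multiplicatively under exactly these local surgeries, so that forming or destroying a cycle at $v$ introduces no spurious sign beyond the prescribed swivel factor. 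Because the induction has localized the comparison to the single planar configuration around $v$, where only two edges change membership in the independent set, this verification reduces to a finite case check on the possible orientations of the edges incident to $v$, and I would carry it out by tracking how the sink-bijections compose, $\beta_{13}=\beta_{23}\circ\beta_{12}$, through that vertex.
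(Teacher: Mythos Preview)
Your plan matches the paper's proof: reduce via Proposition~\ref{swiveldecomp} to the case where $\mathcal{O}_2$ and $\mathcal{O}_3$ differ by a single swivel move, invoke the symmetric-difference identity $I(\mathcal{O}_1)\,\Delta\,I(\mathcal{O}_3) = (I(\mathcal{O}_1)\,\Delta\,I(\mathcal{O}_2))\,\Delta\,(I(\mathcal{O}_2)\,\Delta\,I(\mathcal{O}_3))$, and do a local check at the white vertex. The paper makes the finite check you anticipate explicit, splitting into three cases according to whether the two flipped edges meet $I(\mathcal{O}_1)\,\Delta\,I(\mathcal{O}_2)$ in $0$, $1$, or $2$ edges and verifying $\textrm{sign}(\mathcal{O}_1,\mathcal{O}_3)=-\textrm{sign}(\mathcal{O}_1,\mathcal{O}_2)$ in each.
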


\begin{proof}

By Proposition~\ref{swiveldecomp}, it suffices to check this holds when $\mathcal{O}_2$ and $\mathcal{O}_3$ differ by a single swivel move. 
Note that 
\[
I(\mathcal{O}_1) \Delta I(\mathcal{O}_3) = (I(\mathcal{O}_1) \Delta I(\mathcal{O}_2) )\Delta (I(\mathcal{O}_2) \Delta I(\mathcal{O}_3)),
\]
and $(I(\mathcal{O}_2) \Delta I(\mathcal{O}_3))$ is size two. We split into three cases.

\begin{itemize} 
\item Case 1: $(I(\mathcal{O}_1) \Delta I(\mathcal{O}_2) )$ and $(I(\mathcal{O}_2) \Delta I(\mathcal{O}_3))$ do not intersect.
Then $(I(\mathcal{O}_1) \Delta I(\mathcal{O}_2) )$ and $(I(\mathcal{O}_1) \Delta I(\mathcal{O}_2) )$ are the same except one path is two edges longer, and $\sigma(\mathcal{O}_1, \mathcal{O}_2) = \sigma(\mathcal{O}_1, \mathcal{O}_3)$. Thus, $\textrm{sign}(\mathcal{O}_1, \mathcal{O}_3) = -\textrm{sign}(\mathcal{O}_1, \mathcal{O}_2)$

\item Case 2: $(I(\mathcal{O}_1) \Delta I(\mathcal{O}_2) )$ and $(I(\mathcal{O}_2) \Delta I(\mathcal{O}_3))$ intersect in a single edge.
Then $(I(\mathcal{O}_1) \Delta I(\mathcal{O}_2) )$ and $(I(\mathcal{O}_1) \Delta I(\mathcal{O}_2) )$ are the same except for two paths. In this case, $\sigma(\mathcal{O}_1, \mathcal{O}_2)$ and $\sigma(\mathcal{O}_1, \mathcal{O}_3)$ differ by a single transposition and $\textrm{sign}(\mathcal{O}_1, \mathcal{O}_3) = -\textrm{sign}(\mathcal{O}_1, \mathcal{O}_2)$.

\item Case 3: $(I(\mathcal{O}_1) \Delta I(\mathcal{O}_2) )$ and $(I(\mathcal{O}_2) \Delta I(\mathcal{O}_3))$ interesect in two edges.
Then $(I(\mathcal{O}_1) \Delta I(\mathcal{O}_2) )$ and $(I(\mathcal{O}_1) \Delta I(\mathcal{O}_2) )$ are the same except one path is two edges shorter, and $\sigma(\mathcal{O}_1, \mathcal{O}_2) = \sigma(\mathcal{O}_1, \mathcal{O}_3)$. Thus, $\textrm{sign}(\mathcal{O}_1, \mathcal{O}_3) = -\textrm{sign}(\mathcal{O}_1, \mathcal{O}_2)$
\end{itemize}

\end{proof}

Proposition~\ref{signagree} allows for an alternative definition of relative sign, which we find more intuitive. The relative sign between two perfect orientations is given by the parity of the number of swivel moves and the sign of the permutation of the sinks in any sequence of swivel moves and a permutation of the sinks which transforms one orientation into the other. Proposition~\ref{signagree} guarantees this is well defined. In this sense, swivel moves can be thought of as playing a similar role to adjacent transpositions in determing the sign of a permutation.

\subsection{Consistent Labellings}

Our invariants will be defined in terms of consistent labellings, which we now define.

\begin{definition}
Let $W$ be a perfectly orientable normal plabic graph. A consistent labelling $\ell$ of $W$ is a choice of a possibly empty subset $\ell(e)$ of $\{1, \dots, \nu\}$ for each edge $e$ of $W$, such that the following hold:
\begin{enumerate}
\item At each interior white vertex, incident edge labels are disjoint and their union is $\{1,2,3\}$. 

\item At each black vertex, incident edge labels are disjoint and their union contains $\{1,2,3\}$. 

\item The label at each boundary edge has size 1, and for each $i\in \{4, \dots, \nu\}$, $\{i\}$ appears exactly once among boundary labels.
\end{enumerate}
The edges whose labels contain 1, 2, or 3 can be thought of as determining three dimer covers of $W$.

The boundary word of $\ell$ is the word given by reading off the labels at boundary edges in order, denoted
\[
\textrm{bd}(\ell) = \textrm{bd}(\ell)_1 \cdots \textrm{bd}(\ell)_n
\]The boundary monomial of $\ell$ is the monomial 
\[
\mathbf{x}_{\textrm{bd}(\ell)} = x_{\textrm{bd}(\ell)_1,1} \cdots x_{\textrm{bd}(\ell)_n,n}
\]
The weight of a consistent labelling is
\[
 \textrm{wt}(\ell) = \left(-\frac{1}{2}\right)^{\#\textrm{edge labels of size two}}
\]

To each consistent labelling we also associate a sign, made up of a number of factors. Firstly, for each $1 \leq i \leq 3$, let $E_i$ denote the set of edges of $W$ whose label contains $i$. Consider the symmetric difference $E_i \Delta \mathcal{O}$. This will be a union of disjoint cycles and disjoint paths from sinks of $\mathcal{O}$ to boundary vertices whose incident edge is labelled $i$. For each boundary vertex $b$ with label $i$, let the {\em origin} of $b$ be the sink vertex it connects to, denoted $\textrm{origin}(b)$.  An origin inversion $of$ $w$ is a pair of boundary vertices $b_1 < b_2$ with $\textrm{origin}(b_1) > \textrm{origin}(b_2)$. For each $i$, we get a contribution to the sign of the labelling of 
\[
(-1)^{\#\textrm{origin inversions between vertices labelled } i +\# \textrm{cycles of length 2 modulo 4 in }E_i \Delta \mathcal{O}}
\]
We also have contributions to the sign of a consistent labelling coming from the number of edges of $W$ with labels of size 2, the number of edges of $I(\mathcal{O})$ with an even size label, and inversions in the boundary word of $\ell$. The $\textrm{sign}$ of $\ell$ with respect to orientation $\mathcal{O}$ is given by
\[
\left(\prod_{i=1}^3  (-1)^{\# \textrm{cycles of length 2 modulo 4 in }E_i \Delta \mathcal{O}} \right)
(-1)^{\#\textrm{origin inversions} + \textrm{inv}(\textrm{bd}(\ell)) + \#\{e \in I(\mathcal{O})\mid |\ell(e)| \textrm{ is even}\}}.
\]

We can also think of the sign contribution coming from origin inversions in a different way. Let the {\em decorated boundary word} of $\ell$, $\tilde{\textrm{bd}}(\ell)$, be the boundary word of $\ell$ with a subscript for the origin attached to each letter $1 \leq i \leq 3$. We can consider the decorated boundary word to be a permutation under the order
\[
1_1 < 1_2 < \dots <1_d <2_1 <\dots <3_d < 4 < \dots < \nu
\]
Then the sign of $\ell$ is 
\[
\left(\prod_{i=1}^3 (-1)^{\# \textrm{cycles of length 2 modulo 4 in }E_i \Delta \mathcal{O}} \right)
\textrm{sign}(\tilde{\textrm{bd}}(\ell))(-1)^{\#\{e \in I(\mathcal{O})\mid |\ell(e)| \textrm{ is even}\}}.
\]
times the sign of the decorated boundary word.
\end{definition}

\begin{remark}
The definition of weight of a labelling is a bit mysterious to us. It is chosen to make the Skein relations upcoming in Section~\ref{skeinsect} hold, and we lack any further explanation beyond that. The definition of sign of a labelling is chosen so that a change in orientation introduces a consistent change in sign among all possible consistent labellings. 
\end{remark}

\begin{example}Consider the augmented web and consistent labelling shown below. Edges in $I(\mathcal{O})$ are highlighted in yellow, and the three sinks are labelled with their position in the total order on sinks.
\begin{center}
\begin{tikzpicture}[scale = 2]
\equicc[2cm]{10}{0}{0};
\coordinate (N11) at (0,1.5/2);
\coordinate (N12) at (1/2,.75/2);
\coordinate (N13) at (1/2,-.75/2);
\coordinate (N14) at (0,-1.5/2);
\coordinate (N15) at (-1/2,-.75/2);
\coordinate (N16) at (-1/2,.75/2);
\coordinate(N17) at (0,2.75/2);
\coordinate (N18) at (1.8/2,-1.35/2);
\coordinate (N19) at (-1.8/2,-1.35/2);
\begin{scope}[thick,decoration={
    markings,
    mark=at position 0.5 with {\arrow{>}}}
    ] 
\draw[line width = 3pt, yellow] (N17)--(N11);
\draw[line width = 3pt, yellow] (N16)--(N15);
\draw[line width = 3pt, yellow] (N13)--(N12);

\draw[postaction={decorate}] (N6)--(N19) node[midway,left] {2};
\draw[postaction={decorate}] (N7)--(N19) node[midway,above] {1};
\draw[postaction={decorate}] (N5)--(N14) node[midway,left] {1};
\draw[postaction={decorate}] (N15)--(N14) node[midway,below] {$\emptyset$};
\draw[postaction={decorate}] (N16)--(N15) node[midway,left] {12};
\draw[postaction={decorate}] (N11)--(N16) node[midway,above] {$\emptyset$};
\draw[postaction={decorate}] (N11)--(N12) node[midway,above] {12};
\draw[postaction={decorate}] (N12)--(N13) node[midway,left] {$\emptyset$};
\draw[postaction={decorate}] (N13)--(N14) node[midway,below] {23};
\draw[postaction={decorate}] (N8)--(N16) node[midway,above] {3};
\draw[postaction={decorate}] (N9)--(N17) node[midway,above] {1};
\draw[postaction={decorate}] (N1)--(N17) node[midway,above] {4};
\draw[postaction={decorate}] (N10)--(N17) node[midway,left] {2};
\draw[postaction={decorate}] (N17)--(N11) node[midway,left] {3};
\draw[postaction={decorate}] (N2)--(N12) node[midway,above] {3};
\draw[postaction={decorate}] (N15)--(N19) node[midway,above] {3};
\draw[postaction={decorate}] (N3)--(N18) node[midway,above] {3};
\draw[postaction={decorate}] (N4)--(N18) node[midway,right] {2};
\draw[postaction={decorate}] (N13)--(N18) node[midway,above] {1};
\end{scope}
\draw[fill=black]  (N17) circle (2pt);
\draw[fill=black]  (N18) circle (2pt);
\draw[fill=black]  (N19) circle (2pt);
\draw[fill=white]  (N11) circle (2pt);
\draw[fill=black]  (N12) circle (2pt);
\draw[fill=white]  (N13) circle (2pt);
\draw[fill=black]  (N14) circle (2pt);
\draw[fill=white]  (N15) circle (2pt);
\draw[fill=black]  (N16) circle (2pt);
\node[white] at (N14) {2};
\node[white] at (N18) {3};
\node[white] at (N19) {1};

\end{tikzpicture}
\end{center}
The boundary word of this labelling is $4332121312$, with 28 inversions. The origins of the boundary vertices for each label are:

\begin{center}\begin{tabular}{ccccccccccc}
boundary vertices labelled 1: & 5&7&9\\
origins: &2&1&3\\
\end{tabular}

\begin{tabular}{ccccccccccc}
boundary vertices labelled 2: & 4&6&10\\
origins: &3&1&2\\
\end{tabular}

\begin{tabular}{ccccccccccc}
boundary vertices labelled 3: & 2&3&8\\
origins: &2&3&1\\
\end{tabular}
\end{center}

The total number of origin inversions of $\ell$ is 5, one from label 1 and two each from labels 2 and 3. The decorated boundary word of $\ell$ is $43_23_32_31_22_11_13_11_32_2$, with 33 inversions. Our orientation is acyclic, so there are no cycles to consider, and two of the highlighted edges have an even size label. The sign of this labelling is therefore
\[
(-1)^{33}(-1)^2 = -1
\]
There are three edges with labels of size two, so the weight of this labelling is $(-\frac{1}{2})^3 = -\frac{1}{8}$.
\end{example}

\begin{proposition}
\label{compatsame}
Let $W$ be a perfectly orientable normal plabic graph with consistent labelling $\ell$. Let $\mathcal{O}_1$ and $\mathcal{O}_2$ be two distinct perfect orientations for $\ell$. Then the sign of $\ell$ with respect to these orientations is related by
\[
\textrm{sign}(\ell, \mathcal{O}_1) = \textrm{sign}(\mathcal{O}_1, \mathcal{O}_2) \textrm{sign}(\ell, \mathcal{O}_2)
\]
\end{proposition}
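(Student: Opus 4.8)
The plan is to reduce the statement to the two elementary ways in which perfect orientations can differ -- a single swivel move and a reordering of the sinks -- and to verify the sign behaviour in each case. First I would record that the relative sign is both symmetric and multiplicative along chains. Taking $\mathcal{O}_3 = \mathcal{O}_1$ in Proposition~\ref{signagree} and using that the relative sign of an orientation with itself is $+1$ gives $\textrm{sign}(\mathcal{O}_1,\mathcal{O}_2) = \textrm{sign}(\mathcal{O}_2,\mathcal{O}_1)$, and induction on a chain $\mathcal{O}_2 = \mathcal{P}_0, \dots, \mathcal{P}_k = \mathcal{O}_1$ gives $\textrm{sign}(\mathcal{O}_1,\mathcal{O}_2) = \prod_j \textrm{sign}(\mathcal{P}_{j-1},\mathcal{P}_j)$. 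By Proposition~\ref{swiveldecomp} such a chain exists in which each step is a swivel move or a reordering of the sinks, and every $\mathcal{P}_j$ is again a perfect orientation of $W$, so each $\textrm{sign}(\ell,\mathcal{P}_j)$ is defined. Telescoping the ratios $\textrm{sign}(\ell,\mathcal{P}_j)/\textrm{sign}(\ell,\mathcal{P}_{j-1})$ then reduces the claim to proving $\textrm{sign}(\ell,\mathcal{P}_j) = \textrm{sign}(\mathcal{P}_{j-1},\mathcal{P}_j)\,\textrm{sign}(\ell,\mathcal{P}_{j-1})$ for one elementary step.

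For a reordering of the sinks by a permutation $\rho$, the set $I(\mathcal{O})$, the sets $E_i$, the cycle-parity counts in $E_i \Delta \mathcal{O}$, and the quantity $\#\{e \in I(\mathcal{O}) \mid |\ell(e)| \text{ even}\}$ are all unchanged; only the total order on sinks, and hence the origin subscripts, move. Working with the decorated boundary word $\tilde{\textrm{bd}}(\ell)$, reordering by $\rho$ relabels every subscript $i_s$ to $i_{\rho(s)}$ within each of the three colour blocks of the ground set while fixing the letters $4, \dots, \nu$. Thus $\tilde{\textrm{bd}}(\ell)$ is obtained by relabelling values through the permutation acting as $\rho$ on each of the three blocks, whose sign is $\textrm{sign}(\rho)^3 = \textrm{sign}(\rho)$. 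Since a pure reordering contributes exactly $\textrm{sign}(\rho)$ to the relative sign, this case is finished.

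The remaining case, and the main obstacle, is a single swivel move at a white vertex $v$, where I must show the total sign change is $-1$, matching the relative sign of one swivel. Here $I(\mathcal{O})$ changes by toggling exactly the two swivelled edges $e_1, e_2$ at $v$ (one leaving $I(\mathcal{O})$, one entering), while each $E_i$ is fixed, and the old sink $b_2$ ceases to be a sink while $b_1$ becomes one in the same order position. Since the labels of the three edges at $v$ partition $\{1,2,3\}$, each colour $i$ lies on exactly one of $e_1, e_2, e_3$, and I would split into three local cases: (A) $i \in \ell(e_1)$, where $v$ goes from degree $0$ to degree $2$ in $E_i \Delta \mathcal{O}$ (a component appears at $v$); (B) $i \in \ell(e_2)$, the reverse, where such a component is removed; and (C) $i \in \ell(e_3)$, where the path through $v$ is merely rerouted from $e_1$ onto $e_2$. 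I would then track the three contributions to the sign: the parity of $\#\{e \in I(\mathcal{O}) \mid |\ell(e)| \text{ even}\}$, which shifts by $[\,|\ell(e_1)|\text{ even}\,]+[\,|\ell(e_2)|\text{ even}\,]$; the cycle counts modulo $4$ in each $E_i \Delta \mathcal{O}$; and the origin inversions recorded by $\tilde{\textrm{bd}}(\ell)$ together with the swap of sink. Running over the distributions of the colours among $e_1,e_2,e_3$ indexed by $(|\ell(e_1)|,|\ell(e_2)|,|\ell(e_3)|)$ summing to $3$, I would verify that the product of the three sign changes is always $-1$.

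The delicate point is the interaction between the origin/sink bookkeeping and the cycle-parity count. In cases (A) and (B) the appearing or disappearing component may be a cycle or a path joining or splitting two sink-to-boundary paths, and in (C) the rerouting can change which sink a boundary vertex reaches; at the same time $b_2$ stops being a sink and $b_1$ starts. I expect the cleanest route is to argue entirely at the level of $E_i \Delta \mathcal{O}$ as a disjoint union of paths and cycles, using that toggling two edges at a vertex of degree one in $E_i$ either splices two components together, cuts one apart, or transfers an endpoint, and to read off from this the simultaneous change in the number of cycles of length $\equiv 2 \pmod 4$ and in the permutation $\tilde{\textrm{bd}}(\ell)$. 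Showing that these two effects, together with the even-label parity shift, invariably cancel down to an overall sign of $-1$ -- independently of $\ell$ -- is the crux of the argument.
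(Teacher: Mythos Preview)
Your approach is essentially the paper's: reduce via Proposition~\ref{swiveldecomp} to a single swivel move at $v$ and analyze the local sign change there (the reordering case, which you handle explicitly and correctly, is left implicit in the paper). The paper short-circuits your case split (A)/(B)/(C) by observing directly that for colors on the swivelled edges the path in $E_i\Delta I(\mathcal{O})$ merely lengthens or shortens by two with no origin or cycle change, so the entire origin contribution is $(-1)^{|\ell(v,u_3)|}$, and combining this with the $I(\mathcal{O})$-parity shift $(-1)^{[|\ell(v,u_1)|\text{ even}]+[|\ell(v,u_2)|\text{ even}]}$ a two-line parity check on $|\ell(v,u_3)|$ modulo $2$ gives the required $-1$.
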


\begin{proof}
It suffices to show that this holds when $\mathcal{O}_1$ and $\mathcal{O}_2$ differ by a swivel move at a white vertex $v$. Let $u_1$ and $u_2$ be the sinks which vary between $\mathcal{O}_1$ and $\mathcal{O}_2$, and let $u_3$ denote the third neighbor of $v$. The origin of each boundary vertex is the same in $\mathcal{O}_1$ and $\mathcal{O}_2$ except for the the boundary vertices whose origin path travels along edge $(v, u_3)$, which have swapped origins in $\mathcal{O}_1$ and $\mathcal{O}_2$. Either we have $|\ell(v, u_3)|$ is odd, in which case $|\ell(v, u_2)|$ and $|\ell(v, u_1)|$ are the same parity, or $|\ell(v, u_3)|$ is even, in which case $|\ell(v, u_2)|$ and $|\ell(v, u_1)|$ have opposite parity. In either case, we have
\[
\textrm{sign}(\ell, \mathcal{O}_1) = -\textrm{sign}(\ell, \mathcal{O}_2)
\]
as desired.
\end{proof}

We can now define our invariants for normal plabic graphs.

\begin{definition}
Let $W$ be a perfectly orientable normal plabic graph with perfect orientation $\mathcal{O}$. Let $CL(W)$ denote the set of all consistent labellings of $W$. Define an $SL_3$ invariant attached to $W$, denoted $[W, \mathcal{O}]$ by:
\[
[W, \mathcal{O}] = \sum_{\ell \in CL(W)}\textrm{sign}(\ell, \mathcal{O}) \textrm{wt}(\ell) \mathbf{x}_{\textrm{bd}(\ell)}
\]
\end{definition}

\begin{example}
Consider the augmented web $W \in AW(7,2)$ and perfect orientation $\mathcal{O}$ shown below. 
\begin{center}
\begin{tikzpicture}
\equicc[2cm]{7}{0}{0};
\coordinate (N8) at (0,0);
\coordinate (N9) at (0,1);
\coordinate (N10) at (-.7,-.7);
\coordinate (N11) at (.7,-.7);
\begin{scope}[thick,decoration={
    markings,
    mark=at position 0.5 with {\arrow{>}}}
    ] 
    
\draw[postaction={decorate}] (N7)--(N9) ;
\draw[postaction={decorate}] (N6)--(N9) ;
\draw[postaction={decorate}] (N1)--(N9) ;
\draw[postaction={decorate}] (N9)--(N8) ;
\draw[postaction={decorate}] (N8)--(N10) ;
\draw[postaction={decorate}] (N8)--(N11) ;
\draw[postaction={decorate}] (N2)--(N11) ;
\draw[postaction={decorate}] (N3)--(N11) ;
\draw[postaction={decorate}] (N4)--(N10) ;
\draw[postaction={decorate}] (N5)--(N10) ;
 \end{scope}
 \draw[fill = black] (N9) circle (4pt);
 \draw[fill = black] (N10) circle (4pt);
 \draw[fill = black] (N11) circle (4pt);
 \draw[fill = white] (N8) circle (4pt);
 
 \node[white] at (N10) {1};
 \node[white] at (N11) {2};
\end{tikzpicture}
\end{center}
There are 288 consistent labellings in total, but only 2 up to graph automorphism (not necessarily boundary preserving) and permutation of $\{1,2,3\}$, shown below:

\begin{center}
\begin{tikzpicture}
\equicc[2cm]{7}{0}{0};
\coordinate (N8) at (0,0);
\coordinate (N9) at (0,1);
\coordinate (N10) at (-.7,-.7);
\coordinate (N11) at (.7,-.7);
\begin{scope}[thick,decoration={
    markings,
    mark=at position 0.5 with {\arrow{>}}}
    ] 
    
\draw[postaction={decorate}] (N7)--(N9) node[midway, left] {2};
\draw[postaction={decorate}] (N6)--(N9) node[midway, above] {1};
\draw[postaction={decorate}] (N1)--(N9) node[midway, above] {3};
\draw[postaction={decorate}] (N9)--(N8) node[midway, left] {$\emptyset$};
\draw[postaction={decorate}] (N8)--(N10) node[midway, left] {12};
\draw[postaction={decorate}] (N8)--(N11) node[midway, right] {3};
\draw[postaction={decorate}] (N2)--(N11) node[midway, above] {2};
\draw[postaction={decorate}] (N3)--(N11) node[midway, left] {1};
\draw[postaction={decorate}] (N4)--(N10) node[midway, left] {3};
\draw[postaction={decorate}] (N5)--(N10) node[midway, above] {4};
 \end{scope}
 \draw[fill = black] (N9) circle (4pt);
 \draw[fill = black] (N10) circle (4pt);
 \draw[fill = black] (N11) circle (4pt);
 \draw[fill = white] (N8) circle (4pt);
 
 \node[white] at (N10) {1};
 \node[white] at (N11) {2};
\end{tikzpicture}
\hspace{1in}
\begin{tikzpicture}
\equicc[2cm]{7}{0}{0};
\coordinate (N8) at (0,0);
\coordinate (N9) at (0,1);
\coordinate (N10) at (-.7,-.7);
\coordinate (N11) at (.7,-.7);
\begin{scope}[thick,decoration={
    markings,
    mark=at position 0.5 with {\arrow{>}}}
    ] 
    
\draw[postaction={decorate}] (N7)--(N9) node[midway, left] {2};
\draw[postaction={decorate}] (N6)--(N9) node[midway, above] {1};
\draw[postaction={decorate}] (N1)--(N9) node[midway, above] {4};
\draw[postaction={decorate}] (N9)--(N8) node[midway, left] {3};
\draw[postaction={decorate}] (N8)--(N10) node[midway, left] {1};
\draw[postaction={decorate}] (N8)--(N11) node[midway, right] {2};
\draw[postaction={decorate}] (N2)--(N11) node[midway, above] {1};
\draw[postaction={decorate}] (N3)--(N11) node[midway, left] {3};
\draw[postaction={decorate}] (N4)--(N10) node[midway, left] {2};
\draw[postaction={decorate}] (N5)--(N10) node[midway, above] {3};
 \end{scope}
 \draw[fill = black] (N9) circle (4pt);
 \draw[fill = black] (N10) circle (4pt);
 \draw[fill = black] (N11) circle (4pt);
 \draw[fill = white] (N8) circle (4pt);
 
 \node[white] at (N10) {1};
 \node[white] at (N11) {2};
\end{tikzpicture}
\end{center}
The left labelling has combined sign and weight of $-\frac{1}{2}$, and the right labelling has combined sign and weight  $1$. Let $\textrm{Aut}(W) \subset \mathfrak{S}_n$ denote the group of automorphisms of $W$ identified with the corrseponding permutation of boundary vertices, which has size $6\cdot2\cdot2\cdot2=48$. Then we have
\begin{align*}
[W, \mathcal{O}] = \sum_{\sigma \in \textrm{Aut}(W)} \sum_{\omega \in A_3} \textrm{sign}(\sigma)( &-\frac{1}{2} x_{\omega(3), \sigma(1)} x_{\omega(2), \sigma(2)} x_{\omega(1), \sigma(3)} x_{\omega(3), \sigma(4)} x_{4, \sigma(5)} x_{\omega(1), \sigma(6)}x_{\omega(2), \sigma(7)}  \\&+ x_{4, \sigma(1)} x_{\omega(1), \sigma(2)} x_{\omega(3), \sigma(3)} x_{\omega(2), \sigma(4)} x_{\omega(3), \sigma(5)} x_{\omega(1), \sigma(6)}x_{\omega(2), \sigma(7)})
\end{align*}
where $A_3$ is the alternating group on $\{1,2,3\}$.
\end{example}

To verify that this is a sensible definition, we first check that changing the orientation only introduces a change of sign.
\begin{proposition}
Let $W$ be a perfectly orientable normal plabic graph with perfect orientations $\mathcal{O}_1$ and $\mathcal{O}_2$. Then
\[
[W, \mathcal{O}_1] = \textrm{sign}(\mathcal{O}_1, \mathcal{O}_2) [W, \mathcal{O}_2]
\] 
\end{proposition}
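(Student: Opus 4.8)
The plan is to observe that among the three factors appearing in each summand of $[W,\mathcal{O}]$, namely $\textrm{sign}(\ell, \mathcal{O})$, $\textrm{wt}(\ell)$, and $\mathbf{x}_{\textrm{bd}(\ell)}$, only the first depends on the choice of perfect orientation. Indeed, the set $CL(W)$ of consistent labellings is defined purely in terms of the edge-labelling conditions at interior and boundary vertices and makes no reference to $\mathcal{O}$; likewise the weight $\textrm{wt}(\ell) = (-1/2)^{\#\text{size-two edge labels}}$ and the boundary monomial $\mathbf{x}_{\textrm{bd}(\ell)}$ are functions of $\ell$ alone. Thus the two invariants $[W, \mathcal{O}_1]$ and $[W, \mathcal{O}_2]$ are sums over the same index set with the same weights and monomials, differing only in the sign factor attached to each labelling. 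The first thing I would do is isolate this fact explicitly, since it is the reason the whole argument reduces to a statement about signs.

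With this reduction in hand, the second step is to apply Proposition~\ref{compatsame} termwise. That proposition asserts precisely that for each fixed consistent labelling $\ell$,
\[
\textrm{sign}(\ell, \mathcal{O}_1) = \textrm{sign}(\mathcal{O}_1, \mathcal{O}_2)\,\textrm{sign}(\ell, \mathcal{O}_2).
\]
Substituting this into the definition of $[W, \mathcal{O}_1]$ gives
\[
[W, \mathcal{O}_1] = \sum_{\ell \in CL(W)} \textrm{sign}(\mathcal{O}_1, \mathcal{O}_2)\,\textrm{sign}(\ell, \mathcal{O}_2)\,\textrm{wt}(\ell)\,\mathbf{x}_{\textrm{bd}(\ell)}.
\]

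The final step is to pull the scalar $\textrm{sign}(\mathcal{O}_1, \mathcal{O}_2)$ out of the sum. The crucial point making this legitimate is that $\textrm{sign}(\mathcal{O}_1, \mathcal{O}_2)$ is a single quantity depending only on the pair of orientations and not on $\ell$ — it is the relative sign determined by the symmetric difference $I(\mathcal{O}_1) \Delta I(\mathcal{O}_2)$ together with the reordering of sinks, whose well-definedness is guaranteed by Proposition~\ref{signagree}. Factoring it out yields
\[
[W, \mathcal{O}_1] = \textrm{sign}(\mathcal{O}_1, \mathcal{O}_2) \sum_{\ell \in CL(W)} \textrm{sign}(\ell, \mathcal{O}_2)\,\textrm{wt}(\ell)\,\mathbf{x}_{\textrm{bd}(\ell)} = \textrm{sign}(\mathcal{O}_1, \mathcal{O}_2)\,[W, \mathcal{O}_2],
\]
as desired. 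I do not expect a genuine obstacle here: all the substantive work has already been carried out in Proposition~\ref{compatsame}, which handles the case of a single swivel move and extends to arbitrary pairs of orientations through the swivel decomposition of Proposition~\ref{swiveldecomp}. The only point requiring care is verifying that the orientation-independent factors really are orientation-independent, so that $\textrm{sign}(\ell,\mathcal{O})$ is the sole place where $\mathcal{O}$ enters each term; once that is confirmed, the proof is a one-line factoring argument.
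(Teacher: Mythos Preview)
Your proposal is correct and follows essentially the same approach as the paper: both expand the definition, invoke the termwise identity $\textrm{sign}(\ell,\mathcal{O}_1)=\textrm{sign}(\mathcal{O}_1,\mathcal{O}_2)\,\textrm{sign}(\ell,\mathcal{O}_2)$, and factor out the global sign. The paper's proof cites Proposition~\ref{signagree} at this step, but the identity actually being applied is Proposition~\ref{compatsame}, which is exactly what you invoke; your citation is the more accurate one.
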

\begin{proof}
By Proposition~\ref{signagree}, we have
\begin{align*}
[W, \mathcal{O}_1] &= \sum_{\ell \in CL(W)} \textrm{sign}(\ell, \mathcal{O}_1) \textrm{wt}(\ell) \mathbf{x}_{\textrm{bd}(\ell)} \\&= \textrm{sign}(\mathcal{O}_1, \mathcal{O}_2)\sum_{\ell \in CL(W)}  \textrm{sign}(\ell, \mathcal{O}_2) \textrm{wt}(\ell) \mathbf{x}_{\textrm{bd}(\ell)}\\&=\textrm{sign}(\mathcal{O}_1, \mathcal{O}_2)[W, \mathcal{O}_2]
\end{align*}
\end{proof}

We call these invariants because the resulting polynomials will be invariant under a certain action of $SL_3$. Define an action of $SL_3$ which acts on the matrix of $3n$ variables 
\[
\begin{bmatrix}
x_{1,1} & x_{1,2} & \cdots & x_{1,n}\\
x_{2,1} & x_{2,2} & \cdots & x_{2,n}\\
x_{3,1} & x_{3,2} & \cdots & x_{3,n}
\end{bmatrix}
\]
via left multiplication and leaves all other variables fixed. Then we have the following:
\begin{proposition}
\label{invariant}
Let $X \in SL_3$ and $W \in AW(n,d)$ with perfect orientation $\mathcal{O}$. Then
\[
X \cdot [W,\mathcal{O}] = [W,\mathcal{O}]
\]
\end{proposition}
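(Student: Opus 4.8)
The plan is to deduce the statement from infinitesimal invariance. The group $SL_3$ is generated by the unipotent elements $I + tE_{ab}$ with $a\neq b$ and $t\in\mathbb{C}$, and (for the appropriate indexing of elementary matrices) $E_{ab}$ acts on $\mathbb{C}[M]$ as the derivation $D_{ab}$ determined by $D_{ab}(x_{b,j}) = x_{a,j}$ and $D_{ab}(x_{i,j}) = 0$ for $i\neq b$. This derivation is locally nilpotent on polynomials, so $\exp(tD_{ab})\cdot f = f$ for all $t$ if and only if $D_{ab}\cdot f = 0$. Hence it suffices to prove $D_{ab}\cdot[W,\mathcal{O}] = 0$ for every $a\neq b$. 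I also note that by the preceding proposition $[W,\mathcal{O}]$ is independent of $\mathcal{O}$ up to a global sign, and the $SL_3$-action commutes with that sign, so I am free to fix whatever perfect orientation is most convenient.

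Applying the derivation gives
\[
D_{ab}\cdot[W,\mathcal{O}] = \sum_{\ell \in CL(W)}\ \sum_{j:\,\textrm{bd}(\ell)_j = b} \textrm{sign}(\ell,\mathcal{O})\,\textrm{wt}(\ell)\,\mathbf{x}_{w(\ell,j)},
\]
where $w(\ell,j)$ is the boundary word of $\ell$ with its $j$-th letter changed from $b$ to $a$. I would group this sum by the target word $w$ and show the coefficient of each $\mathbf{x}_w$ vanishes via a sign-reversing involution on the index pairs $(\ell,j)$. A preliminary \emph{balanced-count lemma} is needed: each label $i\in\{1,2,3\}$ occurs exactly $d$ times among the boundary letters of any consistent labelling. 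This follows because the consistency conditions force $E_i$ (the edges whose label contains $i$) to cover each interior vertex exactly once; since $E_i$ is supported on black--white and black--boundary edges, matching each white to a black and each remaining black to a boundary vertex labelled $i$ yields $\#\{\text{boundary }i\} = \#\text{black} - \#\text{white} = d$.

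For the involution, fix $(\ell,j)$ with $\textrm{bd}(\ell)_j = b$ and consider $E_a \Delta E_b$. At every interior vertex this set has degree $0$ or $2$ (at each vertex $a$ and $b$ lie on distinct edges or on a common size-two edge), and along any of its paths the edges alternate between carrying $a$ and carrying $b$; so $E_a\Delta E_b$ is a disjoint union of cycles and alternating paths whose endpoints are precisely the $a$- and $b$-labelled boundary vertices. No path can join two endpoints of the same label: transposing $a\leftrightarrow b$ along such a path would again be a consistent labelling (the three conditions are symmetric in the colors, and the transposition preserves each local partition of $\{1,2,3\}$) but would change the counts of $a$ and $b$ by $\pm 2$, contradicting the lemma. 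Thus the path $P$ emanating from $j$ ends at an $a$-labelled vertex $j'$. Let $\ell'$ be $\ell$ with $a\leftrightarrow b$ transposed along $P$ only (this is vertex-disjoint from the other components, hence consistent). Then $\ell'$ has the same multiset of edge-label sizes as $\ell$, so $\textrm{wt}(\ell') = \textrm{wt}(\ell)$, and $\textrm{bd}(\ell')_{j'} = b$. A direct check shows $(\ell,j)\mapsto(\ell',j')$ is a fixed-point-free involution with $w(\ell',j') = w(\ell,j)$, so it preserves the target monomial.

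The main obstacle is verifying that this involution \emph{reverses} $\textrm{sign}(\ell,\mathcal{O})$. Since the weight and the factor $(-1)^{\#\{e\in I(\mathcal{O}):|\ell(e)|\text{ even}\}}$ are unchanged (the flip preserves all label sizes), it reduces to showing the remaining sign flips. Here I would use the decorated-boundary-word formulation: the flip swaps the decorated letters at positions $j$ and $j'$ and reroutes the $E_a$- and $E_b$-origin paths along $P$, so I must control the change in $\textrm{sign}(\tilde{\textrm{bd}}(\ell))$ together with the change in the number of length-$2\bmod 4$ cycles of $E_a\Delta\mathcal{O}$ and $E_b\Delta\mathcal{O}$, and show their product is exactly $-1$. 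I expect this bookkeeping to be the hard part; the cleanest route is to exploit the freedom in choosing $\mathcal{O}$, picking an orientation whose independent set is adapted to $P$ so that the origin change along $P$ amounts to a single transposition of sinks and the cycle-parity change is forced, with the cycles of $E_a\Delta E_b$ (fixed by the involution) checked separately to contribute trivially.
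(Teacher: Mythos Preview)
Your approach is genuinely different from the paper's, and the paper's route is far simpler. The paper does not attempt any direct analysis of consistent labellings for this proposition. Instead it observes that $SL_3$-invariance is clear for augmented webs with no white vertices (these are jellyfish invariants, i.e.\ products of maximal minors using rows $\{1,2,3\}$ together with some higher rows, and left multiplication of the first three rows by a determinant-one matrix fixes each such minor). It then defers the general case to Section~6: the skein relations show every $[W,\mathcal{O}]$ lies in the $\mathfrak{S}_n$-span of jellyfish invariants, and since column permutation commutes with the $SL_3$-action on rows, invariance propagates. So the paper buys the result essentially for free from structure it needs anyway.

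Your direct involution argument is plausible in outline but has a genuine gap at the sign step, and your proposed fix does not work as stated. You suggest choosing a perfect orientation $\mathcal{O}$ ``adapted to $P$'' so that the origin and cycle-parity bookkeeping simplifies. But $P$ depends on the pair $(\ell,j)$, while $\mathcal{O}$ must be fixed once for the entire sum; you cannot re-choose it term by term. (It is true that changing $\mathcal{O}$ multiplies every $\textrm{sign}(\ell,\mathcal{O})$ by the same global sign, so if the involution is sign-reversing for one orientation it is for all --- but that observation does not let you tailor $\mathcal{O}$ to a particular path.) What you would actually need is a uniform argument, for arbitrary $\mathcal{O}$, that swapping $a\leftrightarrow b$ along $P$ changes the product of the origin-inversion sign, the boundary-word inversion sign, and the two cycle-parity signs for $E_a\Delta\mathcal{O}$ and $E_b\Delta\mathcal{O}$ by exactly $-1$. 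Given how intricate the sign definition is (origins are read off paths in $E_i\Delta\mathcal{O}$, which themselves change when you swap along $P$), this is a substantial computation that you have not carried out, and there is no evident reason it should be clean. The paper's structural argument sidesteps this entirely.
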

This is clear for normal plabic graphs without interior white vertices and with only vertices of degree at least 3, i.e. jellyfish invariants. We will defer the proof for all augmented webs until Section 6, which will show that augmented web invariants live in $\mathfrak{S}_n$ closure of jellyfish invariants. The action of $\mathfrak{S}_n$ commutes with the action of $SL_3$, so the result will follow.

We next show that this definition generalizes jellyfish invariants.

\begin{proposition}
\label{matchjellyfish}
Let $W \in AW(n,d)$ have no white vertices, let $\mathcal{O}$ be a perfect orientation of $W$, and let $\pi$ be the corresponding ordered set partition. Then $[W, \mathcal{O}] = [\pi]_3$.
\end{proposition}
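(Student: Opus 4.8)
The plan is to expand both $[W,\mathcal{O}]$ and $[\pi]_3$ as signed sums of squarefree monomials $\prod_{b=1}^{n} x_{\rho(b),b}$ indexed by the same combinatorial data and then match coefficients one monomial at a time. First I would unwind the structure of $W$: since $W$ has no white vertices, boundary vertices attach only to black vertices and no two black vertices share an edge, so $W$ is a disjoint union of stars, one black vertex $v_B$ per block $B$ of $\pi$, with $f_{\mathcal{O}}$ recording the order of the blocks. Every edge is then a boundary edge, so a consistent labelling $\ell$ is exactly an assignment of a single label $\rho(b)\in\{1,\dots,\nu\}$ to each boundary vertex $b$, and conditions (2) and (3) say precisely that within each block the labels are distinct and contain $\{1,2,3\}$ while $4,\dots,\nu$ each occur once globally. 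This is the data of a jellyfish tableau: setting $R_i=\rho(\pi_i)$ produces a valid $T=T_\rho\in\mathcal{J}_3(\pi)$, and conversely. Expanding each determinant $M_{R_i(T)}^{\pi_i}$ over bijections shows that the coefficient of $\mathbf{x}_{\textrm{bd}(\ell)}=\prod_b x_{\rho(b),b}$ in $[\pi]_3$ is $\textrm{sign}(T_\rho)\prod_i \textrm{sign}(\sigma_i)$, where $\sigma_i$ is the permutation comparing $\rho|_{\pi_i}$ to the increasing-to-increasing filling of column $i$; here $T_\rho$ is forced by $\rho$, so distinct tableaux never contribute to the same monomial.

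Next I would simplify the web sign in this special case. Because all edges point into black vertices, every black vertex is a sink and $I(\mathcal{O})=\emptyset$, and every label has size one. Hence $\textrm{wt}(\ell)=1$, the factor $(-1)^{\#\{e\in I(\mathcal{O}):|\ell(e)|\text{ even}\}}$ is trivial, and $E_i\,\Delta\,\mathcal{O}=E_i$ is a disjoint union of single edges from a sink to a boundary vertex, so it contributes no cycles. Thus $\textrm{sign}(\ell,\mathcal{O})$ collapses to $\textrm{sign}(\tilde{\textrm{bd}}(\ell))$, and the origin of a boundary vertex labelled $i\le 3$ is simply the index of its block. It therefore remains to prove, for every consistent $\rho$, the identity $\textrm{sign}(\tilde{\textrm{bd}}(\ell))=\textrm{sign}(T_\rho)\prod_i \textrm{sign}(\sigma_i)$.

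Finally I would verify this sign identity in two steps. Swapping the labels of two boundary vertices in a single block fixes all of the $R_i$ (hence fixes $T_\rho$) and multiplies $\textrm{sign}(\sigma_i)$ by $-1$; on the web side it transposes two letters of the decorated boundary word and so also multiplies $\textrm{sign}(\tilde{\textrm{bd}}(\ell))$ by $-1$. Since such swaps generate all relabellings with fixed row-set data, it suffices to check the identity for the canonical labelling $\rho_T$, where each $\sigma_i$ is the identity, i.e. to show $\textrm{sign}(\tilde{\textrm{bd}}(\ell))=\textrm{sign}(T)$. For $\rho_T$ the label of a vertex equals its row in $T$, so reading $T$ row by row from left to right lists the decorated letters in their prescribed order $1_1<\dots<1_d<2_1<\dots<3_d<4<\dots<\nu$; consequently the reading word of $T$ is the inverse of the permutation $\tilde{\textrm{bd}}(\ell)$, and the two signs agree. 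Combining the three steps, the coefficient of every monomial matches, giving $[W,\mathcal{O}]=[\pi]_3$.

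The main obstacle I expect is the sign bookkeeping in the last step: making sure every auxiliary factor in the definition of $\textrm{sign}(\ell,\mathcal{O})$ genuinely trivializes once $W$ has no white vertices, and pinning down the reading-word convention so that the order on decorated letters coincides with it exactly, rather than differing by a fixed global permutation that would introduce a spurious overall sign.
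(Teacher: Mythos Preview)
Your proposal is correct and follows essentially the same approach as the paper: both arguments set up the bijection between consistent labellings and pairs (jellyfish tableau, block-permutations), observe that all the auxiliary sign and weight factors trivialize when there are no white vertices, and then verify the remaining sign identity. The only difference is cosmetic: the paper proves $\textrm{sign}(\tilde{\textrm{bd}}(\ell))=\textrm{sign}(T)\prod_j\textrm{sign}(\sigma_j)$ by directly splitting inversions of the decorated boundary word into ``within-row'' pieces (matched with origin inversions) and ``between-row'' pieces, whereas you reduce to the canonical labelling via transpositions within a block and then identify the reading word of $T$ with the inverse of $\tilde{\textrm{bd}}(\ell)$. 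Both routes amount to the same bookkeeping.
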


\begin{proof}Let $v_1, \dots, v_d$ denote the interior vertices of $W$ in order. We claim that a consistent labelling of $W$ corresponds to a choice of jellyfish tableau for $\pi$ as well as a choice of permutation for the elements of each block of $\pi$. Indeed, we can create a jellyfish tableau $T_\ell$ for $\pi$ in the following manner. For each boundary vertex $1 \leq b \leq n$, if $b$ has label $i$ and is connected to interior vertex $v_j$, fill box $i,j$ with the entry $b$. Then, for each interior vertex $v_j$, let $R_j(\ell)$ be the set of boundary vertices connected to $v_j$. Let $\sigma(v_j) \in \mathfrak{S}_n$ denote the permutation which reorders the elements of $R_j$ to have increasing labels. We claim that
\begin{equation}
\label{termmatch}
\textrm{sign}(\ell) \mathbf{x}_{\textrm{bd}(\ell)} = \textrm{sign}(J(T_\ell)) \prod_{j = 1}^d \textrm{sign}(\sigma(v_j)) \left(\prod_{i\in R_j} x_{\textrm{bd}(\ell)_i,i }\right)
\end{equation}
The right side here represents one term in the monomial expansion of the product of determinants defining $J(T_\ell)$. From the definition of $\mathbf{X}_{\textrm{bd}(\ell)}$ we see that the variables appearing on both sides of (\ref{termmatch}) agree, so the content of this claim is that the signs match. We show this in two parts. First, we claim that
\begin{equation}
\# \textrm{origin inversions of } \ell = \#\textrm{inversions within rows of }J(T_\ell)
\end{equation}
Suppose $(b_1,b_2)$ is an origin inversion of $\ell$, with label $i$. Then $b_1$ appears in box $(i,\textrm{origin}(b_1))$ and $b_2$ appears in box $(i,\textrm{origin}(b_2))$. So $(b_1,b_2)$ is also an origin inversion of $T_\ell$. Next, we claim that
\begin{equation}
(-1)^{\textrm{inv}(\textrm{bd}(\ell))} = (-1)^{\#\textrm{inversions between rows of }J(T_\ell)}\prod_{j=1}^d \textrm{sign}(\sigma(v_j))
\end{equation}
or equivalently,
\begin{equation}
\label{invmatch}
(-1)^{\textrm{inv}(\sigma(v_1)\sigma(v_2)\cdots\sigma(v_d)\cdot\textrm{bd}(\ell))} = (-1)^{\#\textrm{inversions between rows of }J(T_\ell)}.
\end{equation}
If the $k^{th}$ letter of $(\sigma(v_1)\sigma(v_2)\cdots\sigma(v_d)\cdot\textrm{bd}(\ell))$ is $i$, then a $k$ appears in the $i^{th}$ row of $J(T)$, so (\ref{invmatch}) holds. We therefore have
\[
[W, \mathcal{O}] = [\pi]_3
\]
as desired.
\end{proof}




Augmented web invariants satisfy the rotation and reflection invariance properties laid out in \cite{FPPS}, as well as something slightly stronger. 
\begin{proposition}
Let $W \in AW(n,d)$ with perfect orientation $\mathcal{O}$ and let $\sigma \in \mathfrak{S}_n$. Let $\sigma \cdot W$ be the graph obtained by permuting the boundary vertices of $W$ according to $\sigma$, i.e. if $b$ is a boundary vertex and $(v,b)$ is an edge of $W$, then $(v, \sigma(b))$ will be an edge of $\sigma \cdot W$. Suppose that $\sigma\cdot W$ is planar, so it is also in $AW(n,d)$. Then, abusively letting $\mathcal{O}$ also be a perfect orientation of $\sigma \cdot W$, we have
\[
\sigma \cdot [W, \mathcal{O}] = \textrm{sign}(\sigma)[\sigma \cdot W, \mathcal{O}]
\]
\end{proposition}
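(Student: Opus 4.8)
The plan is to set up a weight- and sign-compatible bijection between consistent labellings of $W$ and of $\sigma \cdot W$, and then compare the two defining sums term by term. Since $\sigma \cdot W$ agrees with $W$ on every interior vertex and interior edge and differs only in how interior vertices attach to the boundary---the boundary edge $(v,b)$ of $W$ becoming $(v,\sigma(b))$---there is a natural edge bijection that is the identity on interior edges and sends $(v,b)$ to $(v,\sigma(b))$. First I would use this to transport a consistent labelling $\ell \in CL(W)$ to a labelling $\sigma \cdot \ell \in CL(\sigma \cdot W)$ carrying the same label on each corresponding edge. The three defining conditions of a consistent labelling are either local at interior vertices or count occurrences of the labels $\{4, \dots, \nu\}$ among boundary edges, and none of these is affected by permuting which boundary vertex an interior vertex attaches to; hence $\ell \mapsto \sigma \cdot \ell$ is a bijection $CL(W) \to CL(\sigma \cdot W)$. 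Because boundary edges always point inward, the independent set $I(\mathcal{O})$ lies entirely in the interior, so $\mathcal{O}$ orients both $W$ and $\sigma \cdot W$ with the same sinks in the same order, justifying the abuse of notation.

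Next I would check that weights and boundary monomials transform correctly. The weight depends only on the number of size-two edge labels; by condition (3) every boundary label has size one, so every size-two label sits on an interior edge and $\textrm{wt}(\sigma \cdot \ell) = \textrm{wt}(\ell)$. For the monomials, if $\textrm{bd}(\ell) = w_1 \cdots w_n$ then by construction $\textrm{bd}(\sigma \cdot \ell)_{\sigma(b)} = w_b$, so reindexing the product gives
\[
\mathbf{x}_{\textrm{bd}(\sigma \cdot \ell)} = \prod_{b=1}^n x_{w_b,\, \sigma(b)} = \sigma \cdot \mathbf{x}_{\textrm{bd}(\ell)},
\]
matching the column-permutation action of $\mathfrak{S}_n$ on $\mathbb{C}[M]$.

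The heart of the argument, and the step I expect to require the most care, is the sign comparison
\[
\textrm{sign}(\sigma \cdot \ell, \mathcal{O}) = \textrm{sign}(\sigma)\,\textrm{sign}(\ell, \mathcal{O}).
\]
Here I would use the decorated-boundary-word formulation. Its first factor counts cycles of length $\equiv 2 \pmod 4$ in each $E_i \Delta \mathcal{O}$; since boundary vertices have degree one these cycles are entirely interior and are preserved edge-for-edge by the bijection. Its last factor counts the $I(\mathcal{O})$-edges carrying an even label, again an interior quantity left unchanged. It remains to analyze $\textrm{sign}(\tilde{\textrm{bd}}(\ell))$. The origin of a boundary vertex is read off the interior portion of the path to a sink in $E_i \Delta \mathcal{O}$, which is identical in $W$ and $\sigma \cdot W$; thus $\textrm{origin}_{\sigma \cdot \ell}(\sigma(b)) = \textrm{origin}_\ell(b)$, and the decorated letter at position $\sigma(b)$ in $\sigma \cdot \ell$ equals the decorated letter at position $b$ in $\ell$. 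Viewing the decorated boundary word as a bijection from positions to the totally ordered symbol set, the word of $\sigma \cdot \ell$ is the word of $\ell$ precomposed with $\sigma^{-1}$, so its sign is multiplied by $\textrm{sign}(\sigma^{-1}) = \textrm{sign}(\sigma)$. This yields the displayed sign identity.

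Finally I would assemble the pieces: reindexing the defining sum of $[\sigma \cdot W, \mathcal{O}]$ along the bijection $\ell \mapsto \sigma \cdot \ell$ and substituting the three identities above gives
\[
[\sigma \cdot W, \mathcal{O}] = \sum_{\ell \in CL(W)} \textrm{sign}(\sigma)\,\textrm{sign}(\ell, \mathcal{O})\,\textrm{wt}(\ell)\,\bigl(\sigma \cdot \mathbf{x}_{\textrm{bd}(\ell)}\bigr) = \textrm{sign}(\sigma)\,\sigma \cdot [W, \mathcal{O}],
\]
which, since $\textrm{sign}(\sigma) = \pm 1$, rearranges to the claimed formula. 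The one point demanding vigilance is bookkeeping the direction of the permutation ($\sigma$ versus $\sigma^{-1}$, left versus right action) consistently across the monomial action and the decorated-word sign; making both produce the single global factor $\textrm{sign}(\sigma)$ is where a sign error would most easily creep in.
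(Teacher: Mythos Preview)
Your proof is correct and follows essentially the same route as the paper's. The paper compresses the entire argument into two sentences---set up the bijection $\ell \mapsto \sigma \cdot \ell$ and observe that the decorated boundary word of $\sigma \cdot \ell$ is obtained from that of $\ell$ by applying $\sigma$---while you spell out explicitly why the cycle-count and $I(\mathcal{O})$-parity factors are unaffected and why the weight is preserved; these verifications are exactly the content hidden behind the paper's ``so the result follows.''
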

\begin{proof}
For each consistent labelling $\ell$ of $W$, we get a corresponding consistent labelling $\sigma \circ \ell$ of $\sigma \circ W$. The decorated boundary word of $\sigma \cdot \ell$ is obtained by applying $\sigma$ to the decorated boundary word of $\ell$, so the result follows.
\end{proof}
\begin{corollary}
\label{rotationreflect}
Let $c$ be the long cycle in $\mathfrak{S}_n$ and let $w_0$ be the long element. Given an augmented web $W \in AW(n,d)$ with orientation $\mathcal{O}_W$, let $\textrm{rot(W)}$ and $\textrm{rot}(\mathcal{O}_W)$ be the web obtained by  rotating $W$ and $\mathcal{O}_W$ clockwise by $\frac{2\pi}{n}$. Let $\textrm{refl}(W)$ and $\textrm{refl}(\mathcal{O}_W)$ be the web and orientation obtained by reflecting $W$ and $\mathcal{O}_w$ across the perpendicular bisector between boundary vertices 1 and n. We have
\[
c \cdot [W, \mathcal{O}] = (-1)^{n-1}[\textrm{rot}(W), \textrm{rot}(\mathcal{O}_W)]
\]
and
\[
w_0 \cdot [W, \mathcal{O}] = (-1)^{n-1}[\textrm{refl}(W), \textrm{refl}(\mathcal{O}_W)]
\]
\end{corollary}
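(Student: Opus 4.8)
The plan is to deduce both identities directly from the preceding proposition, which gives $\sigma\cdot[W,\mathcal{O}]=\textrm{sign}(\sigma)[\sigma\cdot W,\mathcal{O}]$ whenever the boundary relabeling $\sigma\cdot W$ is again a planar augmented web. The key observation is that a clockwise rotation of the disk by $2\pi/n$ and the reflection across the bisector of $1$ and $n$ are each realized on boundary labels by an explicit permutation: the long cycle $c=(1\,2\,\cdots\,n)$ and the long element $w_0\colon i\mapsto n+1-i$. So the whole statement reduces to three things: (i) identifying each geometric symmetry with the permutation action of the previous proposition, (ii) checking that the relabeled graph is still an element of $AW(n,d)$, and (iii) reading off $\textrm{sign}(c)$ and $\textrm{sign}(w_0)$.

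For the rotation I would first note that cyclically shifting every boundary label by $c$ produces exactly $\textrm{rot}(W)$: the interior of $W$ and its orientation are carried along unchanged, and since rotation is an orientation-preserving symmetry of the disk, $c\cdot W$ is planar and lies in $AW(n,d)$ with $c\cdot\mathcal{O}=\textrm{rot}(\mathcal{O}_W)$. Applying the preceding proposition with $\sigma=c$ gives $c\cdot[W,\mathcal{O}]=\textrm{sign}(c)[\textrm{rot}(W),\textrm{rot}(\mathcal{O}_W)]$, and since $c$ is an $n$-cycle we have $\textrm{sign}(c)=(-1)^{n-1}$, which is the first identity verbatim.

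For the reflection I would argue with $\sigma=w_0$, but here the genuine subtlety appears: reflection reverses the planar orientation of the disk, so the abstract graph $w_0\cdot W$ is not planar in the original embedding and must instead be embedded through the reflected picture $\textrm{refl}(W)$. What makes this harmless is that the invariant $[W,\mathcal{O}]$ is defined purely combinatorially from the underlying graph, its boundary labels, and the orientation $\mathcal{O}$ — via consistent labellings, origin inversions, the number of cycles of length $2$ modulo $4$ in each $E_i\Delta\mathcal{O}$, and the even-label count on $I(\mathcal{O})$ — none of which reference a choice of planar embedding. Hence $[\textrm{refl}(W),\textrm{refl}(\mathcal{O}_W)]=[w_0\cdot W,\mathcal{O}]$, and the proposition yields $w_0\cdot[W,\mathcal{O}]=\textrm{sign}(w_0)[\textrm{refl}(W),\textrm{refl}(\mathcal{O}_W)]$ with $\textrm{sign}(w_0)=(-1)^{\binom{n}{2}}$ the sign of the reversal.

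I expect the hard part to be the final bookkeeping identifying the geometric reflection of the pair $(W,\mathcal{O})$ with the abstract $w_0$-relabeling on the nose. Concretely, one must confirm that under reflection the sinks inherit the same total order (so that origins, and hence all origin inversions, are preserved), that each reflected cycle in $E_i\Delta\mathcal{O}$ keeps its length (so the modulo-$4$ counts are unchanged), and that the only net effect on the sign of each consistent labelling is the reversal of its decorated boundary word. Once these are checked, the per-labelling sign change is the global constant $\textrm{sign}(w_0)$, independent of $\ell$, and summing over $CL(W)$ closes the argument; I would pay particular attention to the reflection's reversal of local cyclic orders, since this is exactly where a spurious or missing orientation sign could enter and is the step most in need of careful verification.
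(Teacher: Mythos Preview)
Your approach is exactly the paper's: the corollary is stated without proof because it follows immediately from the preceding proposition by specializing to $\sigma=c$ and $\sigma=w_0$ and reading off $\textrm{sign}(\sigma)$. Your extra care in the reflection case (checking that the invariant depends only on the abstract graph data, not the embedding) is warranted but the paper treats it as implicit.

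One point worth flagging: your computation gives $\textrm{sign}(w_0)=(-1)^{\binom{n}{2}}$, which is correct, but the corollary as stated in the paper has $(-1)^{n-1}$ for the reflection identity. These disagree (e.g.\ at $n=3,4$). Since nothing in the definition of $\textrm{sign}(\ell,\mathcal{O})$ sees the planar orientation, the proposition really does produce $\textrm{sign}(w_0)=(-1)^{\binom{n}{2}}$, so this appears to be a typo in the paper's statement rather than a gap in your argument.
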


\begin{remark}
In the definition of $\sigma \cdot W$, we are implicitly using the fact that if a web is planar and the positions of its boundary vertices are fixed, it has a unique planar embedding in the disk up to boundary-preserving homeomorphism. This is due to a classical theorem of Whitney \cite{Whitney}.
\end{remark}

Our main theorem regarding augmented web invariants is that they form a basis for the flamingo Specht module. We state it now but defer its proof until the end of the next section. 

\begin{theorem}
\label{basisthm}
Choose a perfect orientation $\mathcal{O}_W$ for each augmented web $W \in AW(n,d)$. Then the set $\{[W, \mathcal{O}_W] \mid W \in AW(n,d)\}$ is a basis for the flamingo Specht module $S^{(d^3, 1^{n-3d})}$.
\end{theorem}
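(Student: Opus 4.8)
The plan is to combine three ingredients: a dimension count, the $\mathfrak{S}_n$-stability of the span furnished by the skein relations of Section~6, and the fact that the jellyfish invariants already span $S^{(d^3,1^{n-3d})}$. First I would record the dimension count. By Theorem~\ref{biject} the map $\varphi$ is a bijection $WNC(n,d,3) \to AW(n,d)$, and by Proposition~\ref{tableauxbiject} (with $r=3$) the set $WNC(n,d,3)$ is in bijection with standard Young tableaux of shape $(d^3,1^{n-3d})$. Hence
\[
|AW(n,d)| = |WNC(n,d,3)| = \#\mathrm{SYT}(d^3,1^{n-3d}) = \dim S^{(d^3,1^{n-3d})}.
\]
Because the proposed set has exactly the right cardinality, it suffices to prove that it spans $S^{(d^3,1^{n-3d})}$; linear independence then follows automatically.

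Let $V$ denote the $\mathbb{C}$-span of $\{[W,\mathcal{O}_W] \mid W \in AW(n,d)\}$. The central input is the collection of skein relations established in Section~6: for every augmented web $W$ and every simple transposition $s_i$, repeated application of the crossing-reduction rule together with the cleanup relations rewrites $s_i\cdot[W,\mathcal{O}_W]$ as a finite $\mathbb{Q}$-linear combination of invariants $[W',\mathcal{O}_{W'}]$ attached to genuine augmented webs $W'$. Consequently $V$ is stable under the action of $\mathfrak{S}_n$. I would then locate enough jellyfish invariants inside $V$: by Proposition~\ref{matchjellyfish}, whenever $\gamma$ is a strongly noncrossing set partition the web $\varphi(\gamma)$ has no white vertices and $[\varphi(\gamma),\mathcal{O}]$ equals a jellyfish invariant $[\pi_\gamma]_3$, so $V$ contains the jellyfish invariant of every strongly noncrossing partition, taken over all admissible block-size types.

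To upgrade this to spanning, recall from Theorem~\ref{perminvariancejelly} that $w\cdot[\pi]_3 = \mathrm{sign}(w)[w\cdot\pi]_3$ for all $w \in \mathfrak{S}_n$. Given an arbitrary ordered set partition $\pi \in \mathcal{OP}(n,d,3)$, choose a strongly noncrossing set partition $\gamma$ with the same multiset of block sizes (for instance the one whose blocks are consecutive intervals) and a permutation $w$ with $w\cdot\pi_\gamma = \pi$; then $[\pi]_3 = \mathrm{sign}(w)\,w\cdot[\pi_\gamma]_3 \in V$, since $[\pi_\gamma]_3 \in V$ and $V$ is $\mathfrak{S}_n$-stable. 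Thus $V$ contains every jellyfish invariant, and by the remark following Theorem~\ref{perminvariancejelly} these already span $S^{(d^3,1^{n-3d})}$, giving $V \supseteq S^{(d^3,1^{n-3d})}$. Combining this with the dimension count yields $\dim S^{(d^3,1^{n-3d})} \le \dim V \le |AW(n,d)| = \dim S^{(d^3,1^{n-3d})}$, so all inequalities are equalities: $V = S^{(d^3,1^{n-3d})}$ and the spanning set $\{[W,\mathcal{O}_W]\}$ is linearly independent, hence a basis.

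The main obstacle is concentrated entirely in the claim that $V$ is $\mathfrak{S}_n$-stable, that is, in proving the skein relations and, crucially, verifying that the rewriting process terminates: after applying the crossing-reduction rule one must check that every term violating the augmented-web conditions (black vertices of degree $<3$, or faces of degree $4$) is resolved by the remaining skein relations into a combination of honest augmented web invariants, with no infinite regress. This is the technical heart of Section~6. An alternative route that sidesteps the full strength of the skein relations would be a triangularity argument in the spirit of Theorem~\ref{basis}: using the same monomial order, one would compute that the leading term of $[\varphi(\gamma),\mathcal{O}]$ agrees up to sign with that of $[\pi_\gamma]_3$, whose leading terms are distinct by Corollary~\ref{setsunique}; the difficulty there is identifying the maximal consistent labelling of $\varphi(\gamma)$ and ruling out cancellation among labellings sharing that boundary monomial.
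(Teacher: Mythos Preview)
Your proposal is correct and follows essentially the same architecture as the paper's proof: establish $\mathfrak{S}_n$-stability of the span $V$ via the skein relations of Section~6 (with termination guaranteed because each reduction rule strictly decreases the number of white vertices), observe that $V$ contains jellyfish invariants via Proposition~\ref{matchjellyfish}, and conclude by the dimension count from Theorem~\ref{biject}. The one point of divergence is how you pass from ``$V$ contains some jellyfish invariants'' to ``$V \supseteq S^{(d^3,1^{n-3d})}$'': you push the noncrossing jellyfish invariants around by $\mathfrak{S}_n$ using Theorem~\ref{perminvariancejelly} to get \emph{all} jellyfish invariants in $V$, whereas the paper simply notes that $V \cap S^{(d^3,1^{n-3d})}$ is a nonzero $\mathfrak{S}_n$-submodule of an irreducible module and hence equals all of $S^{(d^3,1^{n-3d})}$. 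Both arguments are valid; the paper's is a shade shorter, while yours is more explicit and avoids invoking irreducibility.
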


\section{Skein relations for augmented webs}
\label{skeinsect}
This section will introduce skein relations for normal plabic graph invariants, showing that they satsify property (5) of web bases. Furthermore, these skein relations will demonstrate that the span of augmented web invariants is an $\mathfrak{S}_n$ invariant module containing $S^{(d^3, n-3d)}$. Along with our combinatorial bijection from standard Young tableaux, Proposition~\ref{biject}, we will thus obtain a proof of Theorem~\ref{basisthm}.

We first give a diagrammatic representation of these relations. In each image below, shaded gray areas represent an unknown number of edges connecting to other vertices of the graph, and in the perfect orientation, edges are assumed to be oriented towards black vertices unless shown otherwise.

\begin{center}
Crossing Reduction Rule

\begin{tikzpicture}[scale = .5]
\draw (-1,0)--(3,0);
\draw[thick] (0,0)--(0,-4);
\draw[fill = gray] (0,-4)--(-1,-4) arc (-180:-90:1);
\draw[thick] (2,0)--(2,-4);
\draw[fill = gray] (2,-4)--(3,-4) arc (0:-90:1);
\node at (0,.25) {$i$};
\node at (2,.25) {$i+1$};
\node at (.25, -4) {$x$};
\node at (1.75, -4) {$y$};
\node at (3.5, -2) {$=$};
\node at (-1,-2) {$s_i \cdot$};
\draw[fill=black] (0,-4) circle (4pt);
\draw[fill=black] (2,-4) circle (4pt);
\end{tikzpicture}
\begin{tikzpicture}[scale = .5]
\draw (-1,0)--(3,0);
\draw[thick] (0,0)--(0,-4);
\draw[fill = gray] (0,-4)--(-1,-4) arc (-180:-90:1);
\draw[thick] (2,0)--(2,-4);
\draw[fill = gray] (2,-4)--(3,-4) arc (0:-90:1);
\node at (0,.4) {$i$};
\node at (2,.4) {$i+1$};
\node at (.4, -4) {$x$};
\node at (1.6, -4) {$y$};
\node at (3.5, -2) {$-$};
\draw[fill=black] (0,-4) circle (4pt);
\draw[fill=black] (2,-4) circle (4pt);
\end{tikzpicture}
\begin{tikzpicture}[scale = .5]
\draw (-1,0)--(3,0);
\draw[thick] (0,0)--(1,-1)--(1,-3)--(0,-4);
\draw[thick, ->] (1,-1)--(1,-2);
\draw[fill = gray] (0,-4)--(-1,-4) arc (-180:-90:1);
\draw[thick] (2,0)--(1,-1)--(1,-3)--(2,-4);
\draw[fill = gray] (2,-4)--(3,-4) arc (0:-90:1);
\node at (0,.4) {$i$};
\node at (2,.4) {$i+1$};
\node at (.4, -4.2) {$x$};
\node at (1.6, -4.2) {$y$};
\node at (1.4, -1.2) {$v$};
\node at (1.4, -2.8) {$u$};
\node at (3.5, -2) {$-\frac{1}{2}$};
\draw[fill=black] (0,-4) circle (4pt);
\draw[fill=black] (2,-4) circle (4pt);
\draw[fill=white] (1,-3) circle (4pt);
\draw[fill=black] (1,-1) circle (4pt);
\end{tikzpicture}
\begin{tikzpicture}[scale = .5]
\draw (-1,0)--(3,0);
\draw[thick] (0,0)--(0,-4);
\draw[fill = gray] (0,-4)--(-1,-4) arc (-180:-90:1);
\draw[thick] (2,0)--(0,-4);
\draw[fill = gray] (2,-4)--(3,-4) arc (0:-90:1);
\node at (0,.4) {$i$};
\node at (2,.4) {$i+1$};
\node at (.4, -4) {$x$};
\node at (1.6, -4) {$y$};
\node at (3.5, -2) {$-\frac{1}{2}$};
\draw[fill=black] (0,-4) circle (4pt);
\draw[fill=black] (2,-4) circle (4pt);
\end{tikzpicture}
\begin{tikzpicture}[scale = .5]
\draw (-1,0)--(3,0);
\draw[thick] (0,0)--(2,-4);
\draw[fill = gray] (0,-4)--(-1,-4) arc (-180:-90:1);
\draw[thick] (2,0)--(2,-4);
\draw[fill = gray] (2,-4)--(3,-4) arc (0:-90:1);
\node at (0,.4) {$i$};
\node at (2,.4) {$i+1$};
\node at (.4, -4) {$x$};
\node at (1.6, -4) {$y$};
\draw[fill=black] (0,-4) circle (4pt);
\draw[fill=black] (2,-4) circle (4pt);
\end{tikzpicture}

\vspace{.2in}
Square Reduction Rule
\vspace{.2in}

\begin{tikzpicture}[scale = .27]
\draw[thick,->] (0,0)--(0,-1);
\draw[thick,->] (0,-6)--(0,-5);
\draw[thick] (0,0)--(0,-2)--(-1,-3)--(0,-4)--(0,-6);
\draw[thick] (0,0)--(0,-2)--(1,-3)--(0,-4)--(0,-6);
\draw[fill = gray] (-1,-3)--(-1.71,-2.29) arc (135:225:1);
\draw[fill = gray] (1,-3)--(1.71,-2.29) arc (45:-45:1);
\draw[fill = gray] (0,0)--(-.71,.71) arc (135:45:1);
\draw[fill = gray] (0,-6)--(-.71,-6.71) arc (225:315:1);
\draw[fill = black] (0,0) circle (6pt);
\draw[fill = white] (0,-2) circle (6pt);
\draw[fill = black] (-1,-3) circle (6pt);
\draw[fill = black] (1,-3) circle (6pt);
\draw[fill = white] (0,-4) circle (6pt);
\draw[fill = black] (0,-6) circle (6pt);
\end{tikzpicture}
\begin{tikzpicture}[scale = .27]
\node at (-3.2,-3) {$=\frac{1}{2}$};
\draw[thick,->] (0,0)--(0,-1);
\draw[thick] (0,0)--(0,-2)--(-1,-3);
\draw[thick] (0,0)--(0,-2)--(1,-3);
\draw[fill = gray] (-1,-3)--(-1.71,-2.29) arc (135:225:1);
\draw[fill = gray] (1,-3)--(1.71,-2.29) arc (45:-45:1);
\draw[fill = gray] (0,0)--(-.71,.71) arc (135:45:1);
\draw[fill = gray] (1,-3)--(-.71,-6.71) arc (225:315:1);
\draw[fill = black] (0,0) circle (6pt);
\draw[fill = white] (0,-2) circle (6pt);
\draw[fill = black] (-1,-3) circle (6pt);
\draw[fill = black] (1,-3) circle (6pt);
\end{tikzpicture}
\begin{tikzpicture}[scale = .27]
\node at (-3,-3) {$+\frac{1}{2}$};
\draw[thick,->] (0,0)--(0,-1);
\draw[thick] (0,0)--(0,-2)--(-1,-3);
\draw[thick] (0,0)--(0,-2)--(1,-3);
\draw[fill = gray] (-1,-3)--(-1.71,-2.29) arc (135:225:1);
\draw[fill = gray] (1,-3)--(1.71,-2.29) arc (45:-45:1);
\draw[fill = gray] (0,0)--(-.71,.71) arc (135:45:1);
\draw[fill = gray] (-1,-3)--(-.71,-6.71) arc (225:315:1);
\draw[fill = black] (0,0) circle (6pt);
\draw[fill = white] (0,-2) circle (6pt);
\draw[fill = black] (-1,-3) circle (6pt);
\draw[fill = black] (1,-3) circle (6pt);
\end{tikzpicture}
\begin{tikzpicture}[scale = .27]
\node at (-3,-3) {$+\frac{1}{2}$};
\draw[thick,->] (0,-6)--(0,-5);
\draw[thick] (-1,-3)--(0,-4)--(0,-6);
\draw[thick] (1,-3)--(0,-4)--(0,-6);
\draw[fill = gray] (-1,-3)--(-1.71,-2.29) arc (135:225:1);
\draw[fill = gray] (1,-3)--(1.71,-2.29) arc (45:-45:1);
\draw[fill = gray] (1,-3)--(-.71,.71) arc (135:45:1);
\draw[fill = gray] (0,-6)--(-.71,-6.71) arc (225:315:1);
\draw[fill = black] (-1,-3) circle (6pt);
\draw[fill = black] (1,-3) circle (6pt);
\draw[fill = white] (0,-4) circle (6pt);
\draw[fill = black] (0,-6) circle (6pt);
\end{tikzpicture}
\begin{tikzpicture}[scale = .27]
\node at (-3,-3) {$+\frac{1}{2}$};
\draw[thick,->] (0,-6)--(0,-5);
\draw[thick] (-1,-3)--(0,-4)--(0,-6);
\draw[thick] (1,-3)--(0,-4)--(0,-6);
\draw[fill = gray] (-1,-3)--(-1.71,-2.29) arc (135:225:1);
\draw[fill = gray] (1,-3)--(1.71,-2.29) arc (45:-45:1);
\draw[fill = gray] (-1,-3)--(-.71,.71) arc (135:45:1);
\draw[fill = gray] (0,-6)--(-.71,-6.71) arc (225:315:1);
\draw[fill = black] (-1,-3) circle (6pt);
\draw[fill = black] (1,-3) circle (6pt);
\draw[fill = white] (0,-4) circle (6pt);
\draw[fill = black] (0,-6) circle (6pt);
\end{tikzpicture}
\begin{tikzpicture}[scale = .27]
\node at (-3,-3) {$-\frac{1}{4}$};
\draw[fill = gray] (-1,-3)--(-1.71,-2.29) arc (135:225:1);
\draw[fill = gray] (1,-3)--(1.71,-2.29) arc (45:-45:1);
\draw[fill = gray] (-1,-3)--(-.71,.71) arc (135:45:1);
\draw[fill = gray] (1,-3)--(-.71,-6.71) arc (225:315:1);
\draw[fill = black] (-1,-3) circle (6pt);
\draw[fill = black] (1,-3) circle (6pt);
\end{tikzpicture}
\begin{tikzpicture}[scale = .27]
\node at (-3,-3) {$-\frac{1}{4}$};
\draw[fill = gray] (-1,-3)--(-1.71,-2.29) arc (135:225:1);
\draw[fill = gray] (1,-3)--(1.71,-2.29) arc (45:-45:1);
\draw[fill = gray] (1,-3)--(-.71,.71) arc (135:45:1);
\draw[fill = gray] (-1,-3)--(-.71,-6.71) arc (225:315:1);
\draw[fill = black] (-1,-3) circle (6pt);
\draw[fill = black] (1,-3) circle (6pt);
\end{tikzpicture}
\begin{tikzpicture}[scale = .27]
\node at (-3,-3) {$-\frac{1}{4}$};
\draw[fill = gray] (-1,-3)--(-1.71,-2.29) arc (135:225:1);
\draw[fill = gray] (1,-3)--(1.71,-2.29) arc (45:-45:1);
\draw[fill = gray] (1,-3)--(-.71,.71) arc (135:45:1);
\draw[fill = gray] (1,-3)--(-.71,-6.71) arc (225:315:1);
\draw[fill = black] (-1,-3) circle (6pt);
\draw[fill = black] (1,-3) circle (6pt);
\end{tikzpicture}
\begin{tikzpicture}[scale = .27]
\node at (-3,-3) {$-\frac{1}{4}$};
\draw[fill = gray] (-1,-3)--(-1.71,-2.29) arc (135:225:1);
\draw[fill = gray] (1,-3)--(1.71,-2.29) arc (45:-45:1);
\draw[fill = gray] (-1,-3)--(-.71,.71) arc (135:45:1);
\draw[fill = gray] (-1,-3)--(-.71,-6.71) arc (225:315:1);
\draw[fill = black] (-1,-3) circle (6pt);
\draw[fill = black] (1,-3) circle (6pt);
\end{tikzpicture}

\vspace{.2in}
Double Edge Reduction Rule
\vspace{.2in}

\begin{tikzpicture}[scale = .7]
\draw[thick,->] (0,0)--(.5,0);
\draw[thick] (0,0) -- (1,0) arc (120:60:1);
\draw[thick] (1,0) arc (-120:-60:1);
\draw[fill=gray] (0,0)--(-.5,.5) arc (135:225:.71);
\draw[fill=gray] (2,0)--(2.5,.5) arc (45:-45:.71);
\draw[fill=black] (0,0) circle (2pt);
\draw[fill=white] (1,0) circle (2pt);
\draw[fill=black] (2,0) circle (2pt);
\end{tikzpicture}
\begin{tikzpicture}[scale = .7]
\node at (-1.2,0) {$=$}; 
\draw[fill=gray] (1,0)--(-.5,.5) arc (135:225:.71);
\draw[fill=gray] (1,0)--(2.5,.5) arc (45:-45:.71);
\draw[fill=black] (1,0) circle (2pt);
\end{tikzpicture}

\vspace{.2in}
Bivalent Vertex Reduction Rule
\vspace{.2in}

\begin{tikzpicture}[scale = .4]
\draw[thick] (-.65,2)--(-1,1)--(0,0)--(1,1)--(.65,2);
\draw[thick] (-2,2)--(-1,1);
\draw[thick] (2,2)--(1,1);
\draw[thick,->] (0,0)--(.5,.5);
\draw[thick, ->] (-2,2)--(-1.5,1.5);
\draw[fill=gray] (-2,2)--(-2.5,2.5) arc (135:45:.71);
\draw[fill=gray] (-.65,2)--(-1.15,2.5) arc (135:45:.71);
\draw[fill=gray] (.65,2)--(.15,2.5) arc (135:45:.71);
\draw[fill=gray] (2,2)--(1.5,2.5) arc (135:45:.71);
\draw[fill=black] (-2,2) circle (4pt);
\draw[fill=black] (-.65,2) circle (4pt);
\draw[fill=black] (.65,2) circle (4pt);
\draw[fill=black] (2,2) circle (4pt);
\draw[fill=black] (0,0) circle (4pt);
\draw[fill=white] (1,1) circle (4pt);
\draw[fill=white] (-1,1) circle (4pt);
\end{tikzpicture}
\begin{tikzpicture}[scale = .4]
\node at (-3,1) {$=-\frac{1}{2}$};
\draw[thick] (-.65,2)--(0,1)--(.65,2);
\draw[thick] (-2,2)--(0,1);
\draw[thick, ->] (-2,2)--(-1,1.5);
\draw[fill=gray] (-2,2)--(-2.5,2.5) arc (135:45:.71);
\draw[fill=gray] (-.65,2)--(-1.15,2.5) arc (135:45:.71);
\draw[fill=gray] (.65,2)--(.15,2.5) arc (135:45:.71);
\draw[fill=gray] (2,2)--(1.5,2.5) arc (135:45:.71);
\draw[fill=black] (-2,2) circle (4pt);
\draw[white] (0,0) circle (4pt);
\draw[fill=black] (-.65,2) circle (4pt);
\draw[fill=black] (.65,2) circle (4pt);
\draw[fill=black] (2,2) circle (4pt);
\draw[fill=white] (0,1) circle (4pt);
\end{tikzpicture}
\begin{tikzpicture}[scale = .4]
\node at (-3,1) {$-\frac{1}{2}$};
\draw[thick] (-.65,2)--(0,1);
\draw[thick] (-2,2)--(0,1)--(2,2);
\draw[thick, ->] (-2,2)--(-1,1.5);
\draw[fill=gray] (-2,2)--(-2.5,2.5) arc (135:45:.71);
\draw[fill=gray] (-.65,2)--(-1.15,2.5) arc (135:45:.71);
\draw[fill=gray] (.65,2)--(.15,2.5) arc (135:45:.71);
\draw[fill=gray] (2,2)--(1.5,2.5) arc (135:45:.71);
\draw[fill=black] (-2,2) circle (4pt);
\draw[white] (0,0) circle (4pt);
\draw[fill=black] (-.65,2) circle (4pt);
\draw[fill=black] (.65,2) circle (4pt);
\draw[fill=black] (2,2) circle (4pt);
\draw[fill=white] (0,1) circle (4pt);
\end{tikzpicture}
\begin{tikzpicture}[scale = .4]
\node at (-3,1) {$-\frac{1}{2}$};
\draw[thick] (0,1)--(.65,2);
\draw[thick] (-2,2)--(0,1)--(2,2);
\draw[thick, ->] (-2,2)--(-1,1.5);
\draw[fill=gray] (-2,2)--(-2.5,2.5) arc (135:45:.71);
\draw[fill=gray] (-.65,2)--(-1.15,2.5) arc (135:45:.71);
\draw[fill=gray] (.65,2)--(.15,2.5) arc (135:45:.71);
\draw[fill=gray] (2,2)--(1.5,2.5) arc (135:45:.71);
\draw[fill=black] (-2,2) circle (4pt);
\draw[white] (0,0) circle (4pt);
\draw[fill=black] (-.65,2) circle (4pt);
\draw[fill=black] (.65,2) circle (4pt);
\draw[fill=black] (2,2) circle (4pt);
\draw[fill=white] (0,1) circle (4pt);
\end{tikzpicture}
\begin{tikzpicture}[scale = .4]
\node at (-3,1) {$+\frac{1}{2}$};
\draw[thick] (-.65,2)--(0,1)--(.65,2);
\draw[thick] (2,2)--(0,1);
\draw[thick, ->] (-.65,2)--(-.325,1.5);
\draw[fill=gray] (-2,2)--(-2.5,2.5) arc (135:45:.71);
\draw[fill=gray] (-.65,2)--(-1.15,2.5) arc (135:45:.71);
\draw[fill=gray] (.65,2)--(.15,2.5) arc (135:45:.71);
\draw[fill=gray] (2,2)--(1.5,2.5) arc (135:45:.71);
\draw[fill=black] (-2,2) circle (4pt);
\draw[white] (0,0) circle (4pt);
\draw[fill=black] (-.65,2) circle (4pt);
\draw[fill=black] (.65,2) circle (4pt);
\draw[fill=black] (2,2) circle (4pt);
\draw[fill=white] (0,1) circle (4pt);
\end{tikzpicture}

\begin{tikzpicture}[scale = .4]
\node at (-3,1) {$-\frac{1}{4}$};
\draw[fill=gray] (0,0)--(-2.5,2.5) arc (135:45:.71);
\draw[fill=gray] (-.65,2)--(-1.15,2.5) arc (135:45:.71);
\draw[fill=gray] (0,0)--(.15,2.5) arc (135:45:.71);
\draw[fill=gray] (2,2)--(1.5,2.5) arc (135:45:.71);
\draw[white] (0,0) circle (4pt);
\draw[fill=black] (-.65,2) circle (4pt);
\draw[fill=black] (2,2) circle (4pt);
\draw[fill=black] (0,0) circle (4pt);
\end{tikzpicture}
\begin{tikzpicture}[scale = .4]
\node at (-3,1) {$-\frac{1}{4}$};
\draw[fill=gray] (0,0)--(-2.5,2.5) arc (135:45:.71);
\draw[fill=gray] (-.65,2)--(-1.15,2.5) arc (135:45:.71);
\draw[fill=gray] (.65,2)--(.15,2.5) arc (135:45:.71);
\draw[fill=gray] (0,0)--(1.5,2.5) arc (135:45:.71);
\draw[white] (0,0) circle (4pt);
\draw[fill=black] (-.65,2) circle (4pt);
\draw[fill=black] (.65,2) circle (4pt);
\draw[fill=black] (0,0) circle (4pt);
\end{tikzpicture}
\begin{tikzpicture}[scale = .4]
\node at (-3,1) {$-\frac{1}{4}$};
\draw[fill=gray] (-2,2)--(-2.5,2.5) arc (135:45:.71);
\draw[fill=gray] (0,0)--(-1.15,2.5) arc (135:45:.71);
\draw[fill=gray] (0,0)--(.15,2.5) arc (135:45:.71);
\draw[fill=gray] (2,2)--(1.5,2.5) arc (135:45:.71);
\draw[fill=black] (-2,2) circle (4pt);
\draw[white] (0,0) circle (4pt);
\draw[fill=black] (2,2) circle (4pt);
\draw[fill=black] (0,0) circle (4pt);
\end{tikzpicture}
\begin{tikzpicture}[scale = .4]
\node at (-3,1) {$-\frac{1}{4}$};
\draw[fill=gray] (-2,2)--(-2.5,2.5) arc (135:45:.71);
\draw[fill=gray] (0,0)--(-1.15,2.5) arc (135:45:.71);
\draw[fill=gray] (.65,2)--(.15,2.5) arc (135:45:.71);
\draw[fill=gray] (0,0)--(1.5,2.5) arc (135:45:.71);
\draw[fill=black] (-2,2) circle (4pt);
\draw[white] (0,0) circle (4pt);
\draw[fill=black] (.65,2) circle (4pt);
\draw[fill=black] (0,0) circle (4pt);
\end{tikzpicture}

\vspace{.2in}
Leaf Reduction Rule
\vspace{.2in}

\begin{tikzpicture}
\draw[thick] (0,1)--(0,2)--(-1,3);
\draw[thick,->] (0,1)--(0,1.5);
\draw[thick] (0,2)--(1,3);
\draw[fill = gray] (-1,3)--(-1.5,3.5) arc (135:45:.71);
\draw[fill = gray] (1,3)--(.5,3.5) arc (135:45:.71);
\draw[fill = black] (0,1) circle (2pt);
\draw[fill = white] (0,2) circle (2pt);
\draw[fill = black] (-1,3) circle (2pt);
\draw[fill = black] (1,3) circle (2pt);
\node at (2,2.5) {$=$};
\draw[fill = gray] (3,3)--(2.5,3.5) arc (135:45:.71);
\draw[fill = gray] (5,3)--(4.5,3.5) arc (135:45:.71);
\draw[fill = black] (3,3) circle (2pt);
\draw[fill = black] (5,3) circle (2pt);
\end{tikzpicture}

\vspace{.2in}
Boundary-Adjacent Bivalent and Leaf Reduction Rules
\vspace{.2in}

\begin{tikzpicture}[scale=.7]
\draw (0,0)--(2,0);
\draw[thick] (.5,0)--(1,-1)--(1.5,0);
\draw[fill = black] (1,-1) circle (2pt);
\node at (2,-.5) {$=$};
\draw[white] (0,-2)--(1,-2);
\end{tikzpicture}
\begin{tikzpicture}[scale=.7]
\draw (0,0)--(2,0);
\draw[thick] (1,-1)--(1,0);
\draw[fill = black] (1,-1) circle (2pt);
\node at (2,-.5) {$=0$};
\draw[white] (0,-2)--(1,-2);
\end{tikzpicture}
\hspace{1in}
\begin{tikzpicture}[scale=.7]
\node at (2.5,-1) {$=\frac{1}{2}$};
\draw (0,0)--(2,0);
\draw[thick,->] (1,-.5)--(1,-.75);
\draw[thick] (1.5,-1.5)--(1,-1);
\draw[thick] (.5,-1.5)--(1,-1)--(1,-.5)--(1,0);
\draw[fill = gray] (1.5,-1.5)--(2,-1.5) arc (0:-90:.5);
\draw[fill = gray] (.5,-1.5)--(.5,-2) arc (-90:-180:.5);
\draw[fill = black] (1,-.5) circle (2pt);
\draw[fill = white] (1,-1) circle (2pt);
\draw[fill = black] (.5,-1.5) circle (2pt);
\draw[fill = black] (1.5,-1.5) circle (2pt);
\end{tikzpicture}
\begin{tikzpicture}[scale=.7]
\node at (2.5,-1) {$+\frac{1}{2}$};
\draw (0,0)--(2,0);
\draw[thick] (1,0)--(.5,-1.5);
\draw[fill = gray] (1.5,-1.5)--(2,-1.5) arc (0:-90:.5);
\draw[fill = gray] (.5,-1.5)--(.5,-2) arc (-90:-180:.5);
\draw[fill = black] (.5,-1.5) circle (2pt);
\draw[fill = black] (1.5,-1.5) circle (2pt);
\end{tikzpicture}
\begin{tikzpicture}[scale=.7]
\draw (0,0)--(2,0);
\draw[thick] (1,0)--(1.5,-1.5);
\draw[fill = gray] (1.5,-1.5)--(2,-1.5) arc (0:-90:.5);
\draw[fill = gray] (.5,-1.5)--(.5,-2) arc (-90:-180:.5);
\draw[fill = black] (.5,-1.5) circle (2pt);
\draw[fill = black] (1.5,-1.5) circle (2pt);
\end{tikzpicture}

\end{center}

\begin{proposition}[Crossing reduction rule]
\label{crossing}
Let $W$ be a perfectly orientable normal plabic graph. Suppose $W$ has two adjacent boundary vertices $i, i+1$ which connect to distinct interior vertices $x$ and $y$ respectively. Let $W_{I}$ denote the web obtained from $W$ by removing edges $(x,i)$ and $(y,i+1)$, and attaching an ``I'' shape, i.e. adding an interior black vertex $u$ and an interior white vertex $v$, then adding in edges $(x,v), (y,v) (v,u), (u,i)$ and $(u,i+1)$.  We have the following relation:
\begin{equation}
\label{crosseq}
s_i \cdot [W] = [W] - [W_{I}] - -\frac{1}{2}[W_{x}]  - \frac{1}{2}[W_{y}] 
\end{equation}
\end{proposition}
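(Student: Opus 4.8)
The plan is to compute $s_i \cdot [W,\mathcal{O}]$ directly from the state-sum definition and match it, monomial by monomial, against the right-hand side. Since $s_i$ acts on $\mathbb{C}[M]$ by interchanging columns $i$ and $i+1$, we have
\[
s_i \cdot [W,\mathcal{O}] = \sum_{\ell \in CL(W)} \textrm{sign}(\ell,\mathcal{O})\,\textrm{wt}(\ell)\, s_i\!\left(\mathbf{x}_{\textrm{bd}(\ell)}\right),
\]
and $s_i(\mathbf{x}_{\textrm{bd}(\ell)})$ is the boundary monomial obtained from $\textrm{bd}(\ell)$ by transposing the two letters at positions $i$ and $i+1$. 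I would fix a target monomial $m$, let $c,d$ be the labels it assigns to positions $i,i+1$, and compare the coefficient of $m$ on both sides, so that the coefficient of $m$ in $s_i\cdot[W,\mathcal{O}]$ is governed by those consistent labelings of $W$ whose boundary labels at $i,i+1$ are the reverse of those prescribed by $m$.

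First I would dispose of the case $c=d$. Consistency forces the label sets incident to a single interior vertex to be pairwise disjoint, so the merged black vertex $u$ of $W_I$ and the vertices $x,y$ of $W_x,W_y$ admit \emph{no} consistent labeling in which both of their boundary edges carry the common label $c$; hence $[W_I]$, $[W_x]$, $[W_y]$ contribute nothing to $m$. On the other hand $s_i$ fixes every labeling with equal labels at $i,i+1$, so $s_i\cdot[W,\mathcal{O}]$ and $[W,\mathcal{O}]$ agree on $m$, and the identity reads $[W]=[W]$. This isolates the essential case $c\neq d$.

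For $c\neq d$ I would localize the identity. The four graphs $W,W_I,W_x,W_y$ coincide outside a small disk $D$ containing $x,y$, the boundary edges at $i,i+1$, and (for $W_I$) the added pair $u,v$; choosing perfect orientations that agree outside $D$, Proposition~\ref{signagree} and Proposition~\ref{compatsame} let me fix one convenient orientation throughout. The coefficient of $m$ in each of $s_i\cdot[W]$, $[W]$, $[W_I]$, $[W_x]$, $[W_y]$ then becomes a sum, over consistent labelings of the fixed exterior, of a local state sum over the completions inside $D$, weighted by $\textrm{wt}$ and the residual sign. Working with the decorated-boundary-word form of the sign, the factor $\textrm{sign}(\tilde{\textrm{bd}}(\ell))$ packages all origin and boundary-inversion data, and the adjacent swap of the decorated letters at $i,i+1$ supplies the sign relating the in-disk completions feeding $s_i\cdot[W]$ to those feeding the right-hand terms. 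I would then carry out a finite case analysis over the local labelings at $x,y$ (and at $u,v$ for $W_I$): in each case the number of in-disk completions, the weight $\left(-\frac{1}{2}\right)^{\#\textrm{size-two labels}}$, and the local sign factors must reproduce the coefficients $-1,-\frac12,-\frac12$. The way $\{1,2,3\}$ distributes among the edges at the degree-three white vertex $v$ and its black neighbors is exactly what produces the factors of $\frac12$.

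The main obstacle is the global nature of the sign. Unlike the weight, which is a manifestly local product over size-two edges, the sign depends on origin inversions in $E_k\Delta\mathcal{O}$ and on cycle parities, both of which see the entire perfect orientation and the global routing of the three dimers. The heart of the argument is to show that, under the local surgeries relating $W,W_I,W_x,W_y$, these global contributions change only through local data, so that after fixing the exterior labeling the identity genuinely reduces to the bounded in-disk computation. Once this localization is secured — likely by tracking how the three origin-paths through $D$ are rerouted under each surgery and how this alters $\textrm{sign}(\tilde{\textrm{bd}}(\ell))$ and the even-label count on $I(\mathcal{O})$ — the remaining verification of the coefficients is finite, if tedious.
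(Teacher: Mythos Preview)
Your approach is essentially the paper's: fix the labeling on all edges outside the small region near $x,y,i,i+1$ (the paper calls this fixed choice $C$), then verify the identity by a finite case analysis on the local completions, checking that the weighted signed contributions on each side match. The paper organizes the cases by which of $\{1,2,3\}$ already appear among the fixed labels at $x$ and at $y$ and which two labels remain for the boundary; up to permuting $\{1,2,3\}$ and swapping $x\leftrightarrow y$ there are five cases, presented in a table.

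Your main worry about the global sign is resolved more quickly than you anticipate: with the orientations chosen compatibly on the four graphs (all edges in the disk oriented toward black), the origin of every boundary vertex is unchanged across the surgeries and across the local completions, so the only sign variation among the local terms comes from the adjacent swap in the decorated boundary word and the parity of $|\ell(e)|$ on the one or two edges of $I(\mathcal{O})$ inside the disk. No cycle or long-range origin-path analysis is needed; the localization you describe is immediate once you note this.
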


\begin{proof}
We divide consistent labellings for our webs into classes based on a fixed choice $C$ of labels among edges in these webs other than those between $x,y,u,v,i,$ and $i+1$, then show that within each class, equation~\ref{crosseq} holds. Up to symmetry, there are five possible cases, we will explain the first in detail and give a table for the rest.

Case 1: Among the fixed labels of edges incident to $x$, $2$ and $3$ are present. Among the fixed labels of edges incident to $y$, $1, 2,$ and $3$ are present. The missing labels around the boundary are $1$ and $4$. Then there is exactly one way to label the remaining edges of $W$, edge $(x,i)$ must have label 1 and edge $(y,i+1)$ must have label 4. Call this labelling $\ell$. There are two ways to label $W_{I}$, both with weight $\frac{1}{2}$: edge $(x,u)$ must have label 1, edge $(y,u)$ must be unlabelled, edge $(u,v)$ must have label $\{2,3\}$, and edges $(v,i)$ and $(v,i+1)$ must have labels 1 and 4 in either order. Call these labellings $\ell_{I,1}$ and $\ell_{I,2}$. There are two ways to label $W_{x}$. There are no ways to label $W_y$. Note that origin inversions do not change between these labellings, and the chosen orientation of each web is compatible with the specified labelling, so the relative sign is given only by the relative change in the boundary word. Thus, there exist a fixed monomial $m$ such that

\[
s_i \cdot \sum_{\substack{\ell \in CL(W)\\\ell \textrm{ extends }C}} \textrm{sign}(\mathcal{O}, \mathcal{O}_\ell) \textrm{sign}(\ell, \mathcal{O}_\ell) \textrm{wt}(\ell) \mathbf{x}_{\textrm{bd}(\ell)} = (x_{4,i}x_{1,i+1})m
\]
\[
\sum_{\substack{\ell \in CL(W)\\\ell \textrm{ extends }C}} \textrm{sign}(\mathcal{O}, \mathcal{O}_\ell) \textrm{sign}(\ell, \mathcal{O}_\ell) \textrm{wt}(\ell) \mathbf{x}_{\textrm{bd}(\ell)} = (x_{1,i}x_{4,i+1})m
\]
\[
\sum_{\substack{\ell \in CL(W_I)\\\ell \textrm{ extends }C}} \textrm{sign}(\mathcal{O}, \mathcal{O}_\ell) \textrm{sign}(\ell, \mathcal{O}_\ell) \textrm{wt}(\ell) \mathbf{x}_{\textrm{bd}(\ell)} = \frac{1}{2} (x_{1,i}x_{4,i+1}-x_{4,i}x_{1,i+1})m
\]
\[
\sum_{\substack{\ell \in CL(W_x)\\\ell \textrm{ extends }C}} \textrm{sign}(\mathcal{O}, \mathcal{O}_\ell) \textrm{sign}(\ell, \mathcal{O}_\ell) \textrm{wt}(\ell) \mathbf{x}_{\textrm{bd}(\ell)} = (x_{1,i}x_{4,i+1}-x_{4,i}x_{1,i+1})m
\]
\[
\sum_{\substack{\ell \in CL(W_y)\\\ell \textrm{ extends }C}} \textrm{sign}(\mathcal{O}, \mathcal{O}_\ell) \textrm{sign}(\ell, \mathcal{O}_\ell) \textrm{wt}(\ell) \mathbf{x}_{\textrm{bd}(\ell)} = 0
\]
Therefore, since
\[
(x_{4,i}x_{1,i+1}) = (x_{1,i}x_{4,i+1}) - \frac{1}{2} (x_{1,i}x_{4,i+1}-x_{4,i}x_{1,i+1})-\frac{1}{2} (x_{1,i}x_{4,i+1}-x_{4,i}x_{1,i+1})-0,
\]among classes of labellings which fit into this case, equation~\ref{crosseq} holds.

The remaining cases are as follows. To read the following table, first note that to condense information, we have replaced the monomial $x_{a,i}x_{b,i+1}$ with the word $ab$. Then let $C$ denote a fixed way to label the edges of $W$, $W_I$, $W_x$ and $W_y$ other than those between $x,y,u,v,i$, and $i+1$. The fixed labels of $C$ at $x$, $y$, and the missing labels of $C$ around the boundary fit into one of the cases listed in the rows of this table, up to a permutation of $\{1,2,3\}$ and $\{x,y\}$. Then there is a fixed monomial $m$ such that for each column headed by a web, if the entry in that row and column headed by $s_i \cdot W$ is $a$, then there exists a monomial $m$ such that
\[
\sum_{\substack{\ell \in CL(W)\\\ell \textrm{ extends }C}} \textrm{sign}(\mathcal{O}, \mathcal{O}_\ell) \textrm{sign}(\ell, \mathcal{O}_\ell) \textrm{wt}(\ell) \mathbf{x}_{\textrm{bd}(\ell)} = am
\]
The first row is Case 1.

\begin{center}
\begin{tabular}{|c|c|c||c|c|c|c|c|}
\hline
Labels at $x$ & Labels at $y$ & Boundary& $s_i \cdot W$ & $W$ &$W_{I}$&$W_{x}$ & $W_y$\\
\hline
$\{2,3\}$&$\{1,2,3\}$&$\{1,4\}$& 41&14&$ \frac{1}{2}(14-41)$ & $14-41 $&0\\\hline 
$\{2,3\}$ & $\{2,3\}$ & $\{1,1\}$ & 11 &11 &0&0&0\\\hline
$\{2,3\}$ & $\{1,3\}$ & $\{1,2\}$ & 21 &12 &$12-21$&0&0\\\hline
$\{1,2,3\}$ & $\{1,2,3\}$ & $\{4,5\}$ & $54-45$ &$45-54$& $45-54$&$45-54$&$45-54$ \\\hline
$\{1\}$&$\{1,2,3\}$&$\{2,3\}$ & 0&0&$\frac{1}{2}(21-12)$ & $12-21$&0\\\hline
\end{tabular}

\end{center}

\end{proof}

\begin{proposition}[Square reduction rule]
\label{squarerule}
Let $W$ be a perfectly orientable normal plabic graph. Suppose $W$ has a face of degree 4. Let $v_1$, $v_2$ denote the white vertices of this face, and let $u_1$, $u_2$, $u_3$, $u_4$ denote the neighbors of $v_1, v_2$, connected by edges $(v_1,u_1), (v_1, u_2), (v_1, u_4)$ and $(v_2, u_2), (v_2, u_3), (v_3, u_4)$. Let $\pi$ be a noncrossing set partition of $\{1,2,3,4\}$ in which no block contains both $u_2$ and $u_4$. Let $W_\pi$ be the web obtained from $W$ by first deleting $u_1$ or $u_2$ if they connect to two vertices $v_i, v_j$ whose indices lie in the same block of $\pi$, then identifying all vertices among $v_1, \dots, v_4$ whose indices are in the same block in $\pi$. We will write these set partitions without brackets and with vertical bars between blocks, e.g. $W_{1|23|4}$. Then we have the following relation:
\begin{equation}
\label{square}
\sum_{\pi} \left(-\frac{1}{2}\right)^{4-\# \textrm{ blocks of } \pi} [W_\pi] = 0
\end{equation}
\end{proposition}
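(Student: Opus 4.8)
The plan is to treat this exactly like the crossing reduction rule (Proposition~\ref{crossing}): the identity is \emph{local}, since every $W_\pi$ coincides with $W$ outside a small disk $N$ containing the degree-$4$ face together with the six edges meeting its white vertices $v_1,v_2$ (the four square edges and the two stems $(v_1,u_1),(v_2,u_3)$). First I would fix perfect orientations $\mathcal{O}_\pi$ of the $W_\pi$ that all restrict to one common orientation outside $N$ and share a common sink set lying outside $N$; by Proposition~\ref{signagree} and Proposition~\ref{compatsame} this lets me compare all signs against a single fixed exterior orientation.

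Next I would group the consistent labellings of all the $W_\pi$ by their restriction $C$ to the edges outside $N$. Since the square touches no boundary vertex, the boundary word $\mathrm{bd}(\ell)$ and hence the monomial $\mathbf{x}_{\mathrm{bd}(\ell)}$ depend only on $C$, as do all sign and weight contributions coming from outside $N$; so it suffices to show that for each fixed $C$ the local contributions sum to zero. The exterior labelling assigns to each leg $u_i$ a \emph{demand} $D_i := \{1,2,3\}\setminus A_i$, where $A_i$ is the union of exterior labels at $u_i$; by the black-vertex axiom the interior edges at $u_i$ must carry precisely the labels of $D_i$, and because every interior edge meets a white vertex its label is a subset of $\{1,2,3\}$. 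For the full square $W$, an interior completion is then a distribution of $D_2$ and $D_4$ across the square edges subject to the white-vertex conditions
\[
D_1 \sqcup \ell(v_1,u_2) \sqcup \ell(v_1,u_4) = \{1,2,3\}, \qquad D_3 \sqcup \ell(v_2,u_2) \sqcup \ell(v_2,u_4) = \{1,2,3\},
\]
with $\ell(v_1,u_2)\sqcup\ell(v_2,u_2)=D_2$ and $\ell(v_1,u_4)\sqcup\ell(v_2,u_4)=D_4$; analogous but simpler conditions govern the $W_\pi$ in which a white vertex has been contracted away. The proposition thus reduces to a finite identity in the data $(D_1,D_2,D_3,D_4)$, which I would verify by a case analysis organized by the sizes $(|D_1|,\dots,|D_4|)$ and recorded in a table, in the style of the table proving Proposition~\ref{crossing}. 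A reassuring check here is that on labellings using only $\{1,2,3\}$, with the augmentation suppressed, the identity specializes to Kuperberg's classical $SL_3$ square relation.

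The step I expect to be the main obstacle is the sign and weight bookkeeping needed to reproduce the exact coefficients $(-\tfrac12)^{4-\#\text{blocks}}$. The powers of $\tfrac12$ come from the factor $-\tfrac12$ attached to each interior edge carrying a size-$2$ label, which is easy to track; the genuinely delicate part is the \emph{sign}. Here the key simplification is that, for fixed $C$, the boundary word is identical across all $W_\pi$, so the relative signs are governed only by the origin-inversion factors and the $I(\mathcal{O})$-parity factor. Having chosen the $\mathcal{O}_\pi$ to agree outside $N$ with a common sink set, I would argue that contracting a white vertex or merging two legs changes these factors by a controlled amount matching the prescribed coefficient, transporting everything to the common exterior orientation via Proposition~\ref{compatsame}. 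Finally I would dispose of the geometrically degenerate resolutions separately --- most notably the opposite-leg pairing $\{1,3\}\{2\}\{4\}$, whose interior contribution vanishes for every $C$ and which is therefore absent from the figure above.
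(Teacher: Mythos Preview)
Your proposal is correct and follows essentially the same approach as the paper: fix the exterior labelling $C$, reduce to a finite identity in the local data at $u_1,\dots,u_4$, and verify it by a case table (the paper parametrizes by the labels present at each $u_i$, which is the complement of your demands $D_i$, and records twenty rows up to the $\mathfrak{S}_3$ and square symmetries). Your treatment is slightly more explicit than the paper's in two respects: you spell out the orientation compatibility across the $W_\pi$, and you address the partition $13|2|4$, which the paper simply omits from its nine-column table without comment---the point being that merging the opposite legs $u_1,u_3$ destroys planarity, so this $\pi$ is excluded from the sum rather than contributing zero.
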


\begin{proof}

The proof is similar to that of the crossing rule, but there are many more cases. Again, we explain one case in detail and give a table for the rest.

Let $C$ be a fixed way to label edges of the $W_\pi$ other than those incident to $v_1$ and $v_2$.  Then the boundary monomial of any consistent labelling extending $C$ is fixed, so we need to check that the coefficients satisfy equation~\ref{square}. We proceed casewise, based on the fixed labels of $C$ present at each of the four black vertices $u_1$, $u_2$, $u_3$, and $u_4$. 

Case 1: The fixed labels at $v_1$ contain $1,2,$ and 3. The fixed labels at $v_2$ contain $1$ and 2 but not $3$. The fixed labels at $v_3$ contain 3 but not 1 or 2. The fixed labels at $v_4$ do not contain 1, 2, or 3. 

There are two consistent labellings of $W_{1|2|3|4}$ which extend $C$, as shown below.
\begin{center}
\begin{tikzpicture}[scale = .7]
\draw[thick,->] (0,0)--(0,-1);
\node at (.5,-1) {$\emptyset$};
\node at (1,-2) {$\emptyset$};
\node at (1,-4) {$3$};
\node at (-1,-2) {$123$};
\node at (.5,-5) {$12$};
\node at (-1,-4) {$\emptyset$};
\node at (0,1.5) {$123$};
\node at (2.5,-3) {$12$};
\node at (-2.5,-3) {$\emptyset$};
\node at (0,-7.5) {$3$};
\draw[thick,->] (0,-6)--(0,-5);
\draw[thick] (0,0)--(0,-2)--(-1,-3)--(0,-4)--(0,-6);
\draw[thick] (0,0)--(0,-2)--(1,-3)--(0,-4)--(0,-6);
\draw[fill = gray] (-1,-3)--(-1.71,-2.29) arc (135:225:1);
\draw[fill = gray] (1,-3)--(1.71,-2.29) arc (45:-45:1);
\draw[fill = gray] (0,0)--(-.71,.71) arc (135:45:1);
\draw[fill = gray] (0,-6)--(-.71,-6.71) arc (225:315:1);
\draw[fill = black] (0,0) circle (6pt);
\draw[fill = white] (0,-2) circle (6pt);
\draw[fill = black] (-1,-3) circle (6pt);
\draw[fill = black] (1,-3) circle (6pt);
\draw[fill = white] (0,-4) circle (6pt);
\draw[fill = black] (0,-6) circle (6pt);
\node[white] at (-1,-3) {1};
\node[white] at (1,-3) {2};
\end{tikzpicture}\hspace{.5in}
\begin{tikzpicture}[scale = .7]
\node at (.5,-1) {$\emptyset$};
\node at (1,-2) {$3$};
\node at (1,-4) {$\emptyset$};
\node at (-1,-2) {$12$};
\node at (.5,-5) {$12$};
\node at (-1,-4) {$3$};
\node at (0,1.5) {$123$};
\node at (2.5,-3) {$12$};
\node at (-2.5,-3) {$\emptyset$};
\node at (0,-7.5) {$3$};
\draw[thick,->] (0,0)--(0,-1);
\draw[thick,->] (0,-6)--(0,-5);
\draw[thick] (0,0)--(0,-2)--(-1,-3)--(0,-4)--(0,-6);
\draw[thick] (0,0)--(0,-2)--(1,-3)--(0,-4)--(0,-6);
\draw[fill = gray] (-1,-3)--(-1.71,-2.29) arc (135:225:1);
\draw[fill = gray] (1,-3)--(1.71,-2.29) arc (45:-45:1);
\draw[fill = gray] (0,0)--(-.71,.71) arc (135:45:1);
\draw[fill = gray] (0,-6)--(-.71,-6.71) arc (225:315:1);
\draw[fill = black] (0,0) circle (6pt);
\draw[fill = white] (0,-2) circle (6pt);
\draw[fill = black] (-1,-3) circle (6pt);
\draw[fill = black] (1,-3) circle (6pt);
\draw[fill = white] (0,-4) circle (6pt);
\draw[fill = black] (0,-6) circle (6pt);
\node[white] at (-1,-3) {1};
\node[white] at (1,-3) {2};
\end{tikzpicture}
\end{center}

Relative to eachother, the left labelling has weight and sign $-\frac{1}{2}$, and the right labelling has weight and sign $-\frac{1}{4}$.

There is one consistent labelling of each of $W_{1|23|4}$, $W_{1|2|34}$, $W_{14|2|3}$, and $W_{14|23}$ which extends $C$, and none for the remaining $W_\pi$.

\begin{center}
\begin{tikzpicture}[scale = .7]
\draw[thick,->] (0,0)--(0,-1);
\node at (.5,-1) {$\emptyset$};
\node at (1,-2) {$\emptyset$};
\node at (-1,-2) {$123$};
\node at (0,1.5) {$123$};
\node at (2.5,-3) {$12$};
\node at (-2.5,-3) {$\emptyset$};
\node at (0,-7.5) {$3$};
\draw[thick] (0,0)--(0,-2)--(-1,-3);
\draw[thick] (0,0)--(0,-2)--(1,-3);
\draw[fill = gray] (-1,-3)--(-1.71,-2.29) arc (135:225:1);
\draw[fill = gray] (1,-3)--(1.71,-2.29) arc (45:-45:1);
\draw[fill = gray] (0,0)--(-.71,.71) arc (135:45:1);
\draw[fill = gray] (1,-3)--(-.71,-6.71) arc (225:315:1);
\draw[fill = black] (0,0) circle (6pt);
\draw[fill = white] (0,-2) circle (6pt);
\draw[fill = black] (-1,-3) circle (6pt);
\draw[fill = black] (1,-3) circle (6pt);
\node[white] at (-1,-3) {1};
\node[white] at (1,-3) {2};
\end{tikzpicture}
\begin{tikzpicture}[scale = .7]
\node at (.5,-1) {$\emptyset$};
\node at (1,-2) {$3$};
\node at (-1,-2) {$12$};
\node at (0,1.5) {$123$};
\node at (2.5,-3) {$12$};
\node at (-2.5,-3) {$\emptyset$};
\node at (0,-7.5) {$3$};
\draw[thick,->] (0,0)--(0,-1);
\draw[thick] (0,0)--(0,-2)--(-1,-3);
\draw[thick] (0,0)--(0,-2)--(1,-3);
\draw[fill = gray] (-1,-3)--(-1.71,-2.29) arc (135:225:1);
\draw[fill = gray] (1,-3)--(1.71,-2.29) arc (45:-45:1);
\draw[fill = gray] (0,0)--(-.71,.71) arc (135:45:1);
\draw[fill = gray] (-1,-3)--(-.71,-6.71) arc (225:315:1);
\draw[fill = black] (0,0) circle (6pt);
\draw[fill = white] (0,-2) circle (6pt);
\draw[fill = black] (-1,-3) circle (6pt);
\draw[fill = black] (1,-3) circle (6pt);
\node[white] at (-1,-3) {1};
\node[white] at (1,-3) {2};
\end{tikzpicture}
\begin{tikzpicture}[scale = .7]
\node at (1,-4) {$3$};
\node at (.5,-5) {$12$};
\node at (-1,-4) {$\emptyset$};
\node at (0,1.5) {$123$};
\node at (2.5,-3) {$12$};
\node at (-2.5,-3) {$\emptyset$};
\node at (0,-7.5) {$3$};
\draw[thick,->] (0,-6)--(0,-5);
\draw[thick] (-1,-3)--(0,-4)--(0,-6);
\draw[thick] (1,-3)--(0,-4)--(0,-6);
\draw[fill = gray] (-1,-3)--(-1.71,-2.29) arc (135:225:1);
\draw[fill = gray] (1,-3)--(1.71,-2.29) arc (45:-45:1);
\draw[fill = gray] (-1,-3)--(-.71,.71) arc (135:45:1);
\draw[fill = gray] (0,-6)--(-.71,-6.71) arc (225:315:1);
\draw[fill = black] (-1,-3) circle (6pt);
\draw[fill = black] (1,-3) circle (6pt);
\draw[fill = white] (0,-4) circle (6pt);
\draw[fill = black] (0,-6) circle (6pt);
\node[white] at (-1,-3) {1};
\node[white] at (1,-3) {2};
\end{tikzpicture}
\begin{tikzpicture}[scale = .7]
\node at (0,1.5) {$123$};
\node at (2.5,-3) {$12$};
\node at (-2.5,-3) {$\emptyset$};
\node at (0,-7.5) {$3$};
\draw[fill = gray] (-1,-3)--(-1.71,-2.29) arc (135:225:1);
\draw[fill = gray] (1,-3)--(1.71,-2.29) arc (45:-45:1);
\draw[fill = gray] (-1,-3)--(-.71,.71) arc (135:45:1);
\draw[fill = gray] (1,-3)--(-.71,-6.71) arc (225:315:1);
\draw[fill = black] (-1,-3) circle (6pt);
\draw[fill = black] (1,-3) circle (6pt);
\node[white] at (-1,-3) {1};
\node[white] at (1,-3) {2};
\end{tikzpicture}
\end{center}

Relative to our earlier labellings, these carry weights and sign $-1$, $-\frac{1}{2}$, $\frac{1}{2}$ and $1$, respectively. We check that these satsify equation~\ref{square}:
\[
0 = (-\frac{1}{2}-\frac{1}{4}) + (-\frac{1}{2})(-1+\frac{1}{2} -\frac{1}{2}) + (\frac{1}{4})(1)
\]

The remaining cases our included in the following table, case 1 appears in row 4. Cases which can be obtained by a permutation of $\{1,2,3\}$ or a symmetry of the square are not included.
\begin{center}
\tiny
\begin{tabular}{|c|c|c|c||c|c|c|c|c|c|c|c|c|}
\hline
Labels at $u_1$ &  $u_2$ & $u_3$ & $u_4$ & $W_{1|2|3|4}$ & $W_{1|23|4}$ &$W_{1|2|34}$ &$W_{12|3|4}$ &$W_{14|2|3}$ &$W_{14|23}$ &$W_{12|34}$ &$W_{123|4}$ &$W_{134|2}$  \\\hline
$\{1,2,3\}$ & $\{1,2,3\}$ & $\emptyset$ & $\emptyset$&$ -1$ & $-1$ & $-1$& 0& 1 & 1&0&0&1\\\hline
$\{1,2,3\}$ & $\{1,2\}$ & $\{3\}$ & $\emptyset$& $-\frac{3}{4}$ & $-1$ & $-\frac{1}{2}$& 0& $\frac{1}{2}$ & 1&0&0&0\\\hline
$\{1,2,3\}$ & $\{1,2\}$  & $\emptyset$&$\{3\}$& $-\frac{1}{2}$ & $-\frac{1}{2}$ & $-\frac{1}{2}$& 0& 0 & 0&0&0&0\\\hline
$\{1,2,3\}$ & $\{1\}$  & $\{2,3\}$& $\emptyset$&$-\frac{3}{4}$ & $-1$ &$ -\frac{1}{2}$& 0& $\frac{1}{2}$ & 1&0&0&0\\\hline
$\{1,2,3\}$ & $\{1\}$  & $\{2\}$& $\{3\}$&0 & $-\frac{1}{2}$ & $\frac{1}{2}$& 0& 0 & 0&0&0&0\\\hline
$\{1,2\}$ & $\{1,2,3\}$  & $\{3\}$& $\emptyset$&$-\frac{1}{4}$ & $0$ & $-\frac{1}{2}$& 0& $\frac{1}{2}$ & 0&0&0&1\\\hline
$\{1,2\}$ & $\{1,2,3\}$  & $\emptyset$& $\{3\}$&$-\frac{1}{2}$& $-\frac{1}{2}$ & $-\frac{1}{2}$& 0& $1$ & 1&0&0&1\\\hline
$\{1,2\}$ & $\{1,2\}$  & $\{3\}$& $\{3\}$&$-\frac{1}{4}$& $-\frac{1}{2}$ & 0& 0& $\frac{1}{2}$ & 1&0&0&0\\\hline
$\{1,2\}$ & $\{1,3\}$  & $\{2,3\}$& $\emptyset$&$0$& 0&$-1$& 0& 1 & 0&0&0&0\\\hline
$\{1,2\}$ & $\{1,3\}$  & $\{2\}$& $\{3\}$&$-\frac{3}{4}$& $-\frac{1}{2}$&$-1$& 0& $\frac{1}{2}$ & 1&0&0&0\\\hline
$\{1,2\}$ & $\{1,3\}$  & $\{3\}$& $\{2\}$&$\frac{1}{2}$& 0&$1$& 0& 0 & 0&0&0&0\\\hline
$\{1,2\}$ & $\{1\}$  & $\{2,3\}$& $\{3\}$&$-\frac{3}{4}$& $-\frac{1}{2}$&$0$& 0& $-\frac{1}{2}$ & 1&0&0&0\\\hline
$\{1,2\}$ & $\{3\}$  & $\{1,2\}$& $\{3\}$&$0$& $\frac{1}{2}$&$-\frac{1}{2}$& $-\frac{1}{2}$& $\frac{1}{2}$ & 1&-1&0&0\\\hline
$\{1,2\}$ & $\{3\}$  & $\{1,2\}$& $\{3\}$&$0$& $\frac{1}{2}$&$-\frac{1}{2}$& $-\frac{1}{2}$& $\frac{1}{2}$ & 1&-1&0&0\\\hline
$\{1\}$ & $\{1,2,3\}$  & $\{1\}$& $\{2,3\}$&$0$& $\frac{1}{2}$&$-\frac{1}{2}$& $-\frac{1}{2}$& $\frac{1}{2}$ & 1&-1&0&0\\\hline
$\{1\}$ & $\{1,2,3\}$  & $\{2\}$& $\{3\}$&$\frac{1}{4}$& $0$&$\frac{1}{2}$& 0& $\frac{1}{2}$ & 0&0&0&1\\\hline
$\{1\}$ & $\{1,2,3\}$  & $\emptyset$& $\{2,3\}$&$\frac{1}{2}$& $\frac{1}{2}$&$\frac{1}{2}$& 0& $1$ & 1&0&0&1\\\hline
$\{1\}$ & $\{1,2\}$  & $\{3\}$& $\{2,3\}$&$\frac{1}{4}$& $\frac{1}{2}$&0& 0& $\frac{1}{2}$ & 1&0&0&0\\\hline
$\{1\}$ & $\{2,3\}$  & $\{1\}$& $\{2,3\}$&$0$& $\frac{1}{2}$&$-\frac{1}{2}$& $-\frac{1}{2}$& $\frac{1}{2}$ & 1&-1&0&0\\\hline
$\emptyset$ & $\{1,2,3\}$  & $\emptyset$& $\{1,2,3\}$&1&1&1&1&1&1&1&1&1\\\hline
\end{tabular}

\end{center}
\normalsize
\end{proof}

\begin{proposition}[Double edge reduction rule]
Let $W$ be a perfectly orientable normal plabic graph with a white vertex $v$ with exactly two neighbors $u_1$, connected by 1 edge, and $u_2$, connected by 2 edges. Let $\mathcal{O}$ be a perfect orientation such that $I(\mathcal{O})$ contains $(v,u_1)$. Let $W'$ and $\mathcal{O}'$ be the plabic graph and orientation obtained by contracting $v, u_1,$ and $u_2$. Then
\[
[W, \mathcal{O}] = [W', \mathcal{O}']
\]
\end{proposition}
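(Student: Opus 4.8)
The plan is to argue class by class, exactly as in the proofs of the crossing and square reduction rules. I would fix a choice $C$ of labels on every edge of $W$ except the three edges $e_0=(u_1,v)$ and $e_1,e_2=(v,u_2)$ incident to $v$; since these are the only edges destroyed by the contraction, the datum $C$ is the same as a consistent labelling $\ell'$ of $W'$. Indeed, because $v$ is white its three incident labels partition $\{1,2,3\}$, and at the merged black vertex $M$ of $W'$ each color $i$ survives on exactly one of the edge-sets $A$ (the edges formerly at $u_1$) or $B$ (the edges formerly at $u_2$), so the labels on $A\cup B$ are disjoint with union containing $\{1,2,3\}$; thus restriction is well defined. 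Conversely, writing $k=|\ell(e_0)|$ for the number of colors that $\ell'$ places on $B$, the fibre over $\ell'$ consists of the $2^{3-k}$ ways to distribute the remaining $3-k$ colors (those on $A$) between the parallel edges $e_1$ and $e_2$, with the $k$ colors on $B$ forced onto $e_0$. The goal is then to prove $\sum_{\ell}\textrm{sign}(\ell,\mathcal O)\,\textrm{wt}(\ell)=\textrm{sign}(\ell',\mathcal O')\,\textrm{wt}(\ell')$, the sum being over the fibre.

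Since only the labels of $e_0,e_1,e_2$ vary across a fibre and none of these is a boundary edge, every labelling in the fibre and $\ell'$ share the same boundary monomial, so it remains to compare coefficients. For the weight, the only size-two labels that can appear or disappear lie among $e_0,e_1,e_2$, so $\textrm{wt}(\ell)=\textrm{wt}(\ell')\,(-\tfrac12)^{s(\ell)}$ where $s(\ell)$ counts size-two labels among those three edges. Summing over the fibre is the elementary computation (with Iverson brackets)
\[
\left(-\tfrac12\right)^{[k=2]}\sum_{j=0}^{3-k}\binom{3-k}{j}\left(-\tfrac12\right)^{[j=2]+[3-k-j=2]}=(-1)^{k+1},
\]
so that $\sum_{\ell}\textrm{wt}(\ell)=(-1)^{k+1}\textrm{wt}(\ell')$.

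The heart of the argument is the sign, and I would first treat the case where $\mathcal O$ is acyclic. The hypothesis $e_0\in I(\mathcal O)$ forces $e_0$ to be the unique in-edge of $v$ and hence both copies of $(v,u_2)$ to point out of $v$; in particular $e_1,e_2\notin I(\mathcal O)$, the contraction carries $\mathcal O$ to a perfect orientation $\mathcal O'$ with $I(\mathcal O')=I(\mathcal O)\setminus\{e_0\}$ and the inherited sink order, and $\mathcal O'$ is again acyclic. For acyclic $\mathcal O$, each symmetric difference $E_i\Delta I(\mathcal O)$ has no cycles (any such cycle would alternate $I$-edges with non-$I$-edges and so be a directed cycle of $\mathcal O$), and the component of each color is a path whose vertex sequence is merely collapsed by identifying $u_1,v,u_2$ to $M$; consequently the origins, the decorated boundary word, and all cycle-terms agree for every $\ell$ in the fibre and for $\ell'$. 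The only surviving discrepancy is $\#\{e\in I(\mathcal O):|\ell(e)|\text{ even}\}$, which differs from its $\mathcal O'$-counterpart by the single edge $e_0$, giving $\textrm{sign}(\ell,\mathcal O)=(-1)^{[k\text{ even}]}\textrm{sign}(\ell',\mathcal O')$ uniformly on the fibre. Combining with the weight gives the coefficient sum $(-1)^{k+1+[k\text{ even}]}\,\textrm{sign}(\ell')\textrm{wt}(\ell')=\textrm{sign}(\ell')\textrm{wt}(\ell')$, using that $k+[k\text{ even}]$ is always odd; hence $[W,\mathcal O]=[W',\mathcal O']$ whenever $\mathcal O$ is acyclic.

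The main obstacle is removing acyclicity: for a general $\mathcal O$ the set $E_i\Delta I(\mathcal O)$ can contain cycles, and a cycle through $v$ is shortened by two edges under contraction, which flips its length-mod-$4$ parity and so changes a cycle-term (a direct check on a small example shows this change is genuinely present for a single color). I would handle this by reduction to the acyclic case using the reorientation results. Every directed cycle of $\mathcal O$ alternates black and white vertices; since $v$'s only in-edge is $e_0$, any directed cycle meeting $v$ has length at least four and therefore passes through a white vertex other than $v$, so by swivel moves at white vertices $\neq v$ one can break all cycles and reach an acyclic perfect orientation $\widetilde{\mathcal O}$ still satisfying $e_0\in I(\widetilde{\mathcal O})$ (this is the analogue, within the class $\{e_0\in I\}$, of the acyclicity arguments behind Proposition~\ref{swiveldecomp}). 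Because such swivels involve only edges untouched by the contraction, they descend to swivels of $W'$ relating $\mathcal O'$ to $\widetilde{\mathcal O}'$, and the sink correspondence is preserved, so Proposition~\ref{signagree} gives $\textrm{sign}(\mathcal O,\widetilde{\mathcal O})=\textrm{sign}(\mathcal O',\widetilde{\mathcal O}')$. Then, by the reorientation proposition and the acyclic case,
\[
[W,\mathcal O]=\textrm{sign}(\mathcal O,\widetilde{\mathcal O})\,[W,\widetilde{\mathcal O}]=\textrm{sign}(\mathcal O',\widetilde{\mathcal O}')\,[W',\widetilde{\mathcal O}']=[W',\mathcal O'],
\]
which completes the proof. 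I expect the delicate point to be precisely the compatibility of relative signs under contraction together with the guaranteed existence of the acyclic orientation inside the class $\{e_0\in I\}$.
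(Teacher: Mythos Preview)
Your approach is essentially the paper's: fix the labels away from $e_0,e_1,e_2$ and sum over the ways to fill in those three edges. The paper carries this out as four separate cases (corresponding to your parameter $k=0,1,2,3$, modulo some evident typos swapping $u_1$ and $u_2$) and simply asserts a ``relative sign and weight'' in each, whereas you package the weight side into the single identity $\sum_\ell\mathrm{wt}(\ell)=(-1)^{k+1}\mathrm{wt}(\ell')$ and isolate the sign side as the parity contribution $(-1)^{[k\text{ even}]}$ of the edge $e_0\in I(\mathcal O)$.

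Your treatment is in fact more scrupulous than the paper's on one point: you notice that when $\mathcal O$ has a directed cycle through $v$, the contraction shortens the corresponding cycle in $E_i\Delta I(\mathcal O)$ by two and can flip a cycle-parity factor in the sign. The paper's case analysis silently ignores this term. Your remedy---lift an acyclic orientation of $W'$ to $W$ (orienting $e_0$ into $v$ and $e_1,e_2$ out) to obtain an acyclic $\widetilde{\mathcal O}$ with $e_0\in I(\widetilde{\mathcal O})$, observe that $I(\mathcal O)\Delta I(\widetilde{\mathcal O})$ contains none of $e_0,e_1,e_2$, and then run the swivel argument of Proposition~\ref{swiveldecomp} on edges away from $v$---is the right idea and makes your argument at least as complete as the paper's. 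The only residual looseness is exactly the point you flag: that the cycle-breaking walks in the proof of Proposition~\ref{swiveldecomp} can be chosen to avoid $v$, which follows once one notes that $v$ is not incident to any edge of $I(\mathcal O)\Delta I(\widetilde{\mathcal O})$.
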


\begin{proof}
Fix a labelling of edges other than those incident to $v$. We have 4 cases up to a permutation of $\{1,2,3\}$

\begin{itemize}
\item Case 1: Among the fixed labels at $u_1$, $\{1,2,3\}$ appear. Then the union of the labels of the two edges between $v$ and $u_1$ is $\{1,2,3\}$. There are two ways to label these edges with one of size 3, and these come with each with relative sign and weight $-1$. There are 6 ways to label the edges with one label of size 2 and the other of size 1, each with relative sign and weight $frac{1}{2}$, for a total weight and sign of $1$.
\item Case 2: Among the fixed labels at $u_2$, $\{1,2\}$ appear. Then $(v,u_1)$ has label $3$ and the edges between $v$ and $u_2$ have labels $1$ and $2$ split between them. There are two ways to have a label of size 2, these appear with relative sign and weight $-\frac{1}{2}$. There are two ways to have two labels of size 1, these appear with relative sign and weight 1, for a total weight and sign of $1$.
\item Case 3: Among the fixed labels at $u_1$, only $\{1\}$ appears. Then $(v,u_1)$ has label $\{2,3\}$, and one of the edges to $u_2$ has label $\{1\}$. There are two ways to do this, each with relative sign and weight 1.
\item Case 4: The fixed labels at $u_1$ are all empty. Then there is only one way to label the edges incident to $v$, with weight one.
\end{itemize}
In all cases $W'$ has no choices to be made, and thus has relative sign and weight 1, so the result holds.
\end{proof}

\begin{proposition}[Leaf vertex removal]
Let $W$ be a perfectly orientable plabic graph with a black vertex $u$ of degree 1 connected to white vertex $v$. Let $\mathcal{O}$ be a perfect orientation wth the edge $(u,v)$ oriented towards $u$. Let $W'$ be the plabic graph obtained by removing vertices $u$ and $v$ and $\mathcal{O}'$ be the resulting perfect orientation. Then $[W,\mathcal{O}] = [W', \mathcal{O}']$.  
\end{proposition}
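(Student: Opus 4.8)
The plan is to mimic the local-analysis template of the preceding skein relations (Propositions~\ref{crossing} and~\ref{squarerule}): exhibit a bijection $CL(W)\to CL(W')$ that preserves the boundary monomial, the weight, and the sign term by term. Write $w_1,w_2$ for the two neighbors of $v$ other than $u$.

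First I would determine the forced labels near $u$ and $v$. At the degree-one black vertex $u$, condition (2) of a consistent labelling forces $\ell(u,v)\supseteq\{1,2,3\}$, while at the white vertex $v$ condition (1) forces the three incident labels to be disjoint with union $\{1,2,3\}$. Together these give $\ell(u,v)=\{1,2,3\}$ and $\ell(v,w_1)=\ell(v,w_2)=\emptyset$ in every consistent labelling of $W$. Because $(v,w_1)$ and $(v,w_2)$ carry the empty label, deleting them leaves condition (2) at $w_1$ and $w_2$ intact, and deleting $u,v$ affects no other vertex; hence restriction to the edges of $W'$ is a bijection $\ell\mapsto\ell'$ from $CL(W)$ to $CL(W')$, inverted by re-attaching the forced labels.

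Two of the three matched quantities are immediate. The configuration is entirely interior, so no boundary edge is touched and $\textrm{bd}(\ell)=\textrm{bd}(\ell')$; thus $\mathbf{x}_{\textrm{bd}(\ell)}=\mathbf{x}_{\textrm{bd}(\ell')}$ and $\textrm{inv}(\textrm{bd}(\ell))=\textrm{inv}(\textrm{bd}(\ell'))$. The deleted edges carry labels of sizes $3,0,0$, none equal to $2$, so $\textrm{wt}(\ell)=\textrm{wt}(\ell')$. It remains to match the remaining sign factors. I would orient $(u,v)$ away from the leaf $u$, as in the figure, so that $(u,v)$ is the unique in-edge of $v$ and $u$ is a source; then no sink is created or destroyed and $\mathcal{O}'$ is exactly $\mathcal{O}$ restricted to $W'$ with the same sinks in the same order. (The orientation sending $(u,v)$ toward $u$ differs from this one by a single swivel move at $v$, so by Propositions~\ref{swiveldecomp} and~\ref{signagree} it suffices to treat this case.) For each $i\in\{1,2,3\}$ one checks $(u,v)\in E_i\cap I(\mathcal{O})$ and $(v,w_1),(v,w_2)\notin E_i\cup I(\mathcal{O})$, so none of the three deleted edges lies in $E_i\Delta I(\mathcal{O})$; consequently $u$ and $v$ are isolated there, and the cycles and sink-to-boundary paths of $E_i\Delta\mathcal{O}$ coincide with those of $E_i\Delta\mathcal{O}'$ on $W'$. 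Hence every origin, every origin-inversion count, and every cycle parity is unchanged, and the only deleted edge of $I(\mathcal{O})$ is $(u,v)$, whose label $\{1,2,3\}$ has odd size, so $\#\{e\in I(\mathcal{O}):|\ell(e)|\text{ even}\}$ is unchanged as well. Therefore $\textrm{sign}(\ell,\mathcal{O})=\textrm{sign}(\ell',\mathcal{O}')$, and summing over the bijection gives $[W,\mathcal{O}]=[W',\mathcal{O}']$.

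I expect the sign bookkeeping to be the only real obstacle: the delicate point is verifying that deleting the forced edges genuinely leaves the symmetric-difference paths $E_i\Delta\mathcal{O}$ undisturbed, so that all origins (and the induced inversions) and all cycle parities carry over verbatim. The boundary-monomial and weight comparisons, by contrast, are essentially bookkeeping.
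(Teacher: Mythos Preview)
Your argument is correct and is precisely the paper's approach fleshed out: the paper's proof is the single sentence ``If we take a consistent labelling of $W$, and remove vertices $u$ and $v$, we get a consistent labelling of $W'$ with the same sign and weight, so the result follows,'' and you have supplied the verification that the forced labels are $\ell(u,v)=\{1,2,3\}$ and $\ell(v,w_j)=\emptyset$, and that these contribute nothing to any of the sign or weight factors. One minor point: your parenthetical claim that the two orientations differ by a \emph{single} swivel move at $v$ need not hold (such a swivel would force one of $w_1,w_2$ to already be a sink), but this does not matter---with the figure's orientation, where $(u,v)$ is directed from $u$ into $v$ so that $u$ is not a sink and $\mathcal{O}'$ inherits the same sinks in the same order as $\mathcal{O}$, your sign analysis is already complete and the parenthetical is unnecessary.
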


\begin{proof}
If we take a consistent labelling of $W$, and remove vertices $u$ and $v$, we get a consistent labelling of $W'$ with the same sign and weight, so the result follows.
\end{proof}

\begin{proposition}[Boundary adjacent leaf removal]
Let $W$ be a perfectly orientable normal plabic graph with a black vertex of degree one or two only connected to the boundary. Then $[W] = 0$.
\end{proposition}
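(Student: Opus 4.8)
The plan is to show that such a $W$ has no consistent labellings whatsoever, so that the sum defining its invariant is empty. Recall that for any perfect orientation $\mathcal{O}$ we have $[W,\mathcal{O}] = \sum_{\ell \in CL(W)} \textrm{sign}(\ell,\mathcal{O})\,\textrm{wt}(\ell)\,\mathbf{x}_{\textrm{bd}(\ell)}$, so it suffices to prove that $CL(W) = \varnothing$; the choice of orientation and all the sign and weight bookkeeping then become irrelevant.

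Let $u$ be the black vertex of degree one or two all of whose neighbors are boundary vertices. Since $W$ is a normal plabic graph, boundary vertices are adjacent only to black vertices, so every edge incident to $u$ is a boundary edge. By condition (3) in the definition of a consistent labelling, the label on each boundary edge has size exactly one. Hence, in any candidate labelling, the labels on the (at most two) edges incident to $u$ have union of cardinality at most two.

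On the other hand, condition (2) requires that at the black vertex $u$ the incident edge labels be disjoint with union containing $\{1,2,3\}$, which forces that union to have cardinality at least three. These two requirements are incompatible, so no labelling of the edges of $W$ can satisfy all the constraints at $u$. Therefore $CL(W) = \varnothing$, and $[W] = 0$ regardless of the perfect orientation chosen.

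This is essentially a one-line pigeonhole count, so I do not expect a genuine obstacle. The only points needing care are the two bookkeeping facts just used: first, that every edge at $u$ is genuinely a boundary edge, which relies on the normality hypothesis that boundary vertices meet only black vertices; and second, that condition (2) demands the union to contain all three colors $1,2,3$ rather than merely to be nonempty. It is worth noting that this degenerate configuration is exactly the one excluded from the definition of an augmented web, where black vertices are required to have degree at least three, which is why the statement appears here only as a terminal reduction step and never within $AW(n,d)$ itself.
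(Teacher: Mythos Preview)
Your proof is correct and takes essentially the same approach as the paper: the paper's proof is the single line ``There are no consistent labellings of $W$, so the result follows,'' and you have simply spelled out why $CL(W)=\varnothing$ via the incompatibility between condition (2) at $u$ and condition (3) on boundary edges.
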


\begin{proof}
There are no consistent labellings of $W$, so the result follows.
\end{proof}

\begin{proposition}[Boundary adjacent bivalent vertex removal]
Let $W$ be a perfectly orientable normal plabic graph with a degree 2 black vertex $u$ connected to one boundary vertex and one white vertex $v$ Let the other neighbors of $v$ be $x$ and $y$. Let $\mathcal{O}$ be a perfect orientation of $W$ in which $(u,v)$ is oriented towards $v$. Let $W_x$ be the graph obtained by removing $u$ and $v$ and connecting $x$ to the boundary, and let $\mathcal{O}_x$ be the orientation obtained from $\mathcal{O}_x$ in the same fashion. Let $W_y$ and $\mathcal{O}_y$ be analogous. Then we have
\[
[W,\mathcal{O}] = \frac{1}{2}[W_x, \mathcal{O}_x]  + \frac{1}{2}[W_y, \mathcal{O}_y]
\]
\end{proposition}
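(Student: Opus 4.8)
The plan is to mirror the proofs of the preceding skein relations: fix a labelling $C$ of all edges of $W$ other than the boundary edge at the relevant boundary vertex $b$ and the three edges $(u,v)$, $(v,x)$, $(v,y)$ incident to $v$. This same $C$ labels every edge of $W_x$ except its boundary edge $(b,x)$, and every edge of $W_y$ except $(b,y)$. Since the boundary monomial of a consistent labelling depends only on the boundary-edge labels, which are fixed by $C$ except at $b$, it suffices to show that for each $C$ the signed, weighted coefficients contributed by $W$, $W_x$, and $W_y$ satisfy the stated identity; summing over all $C$ then gives the result.

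First I would record the forcing imposed by $C$. Writing $F_x, F_y \subseteq \{1,2,3\}$ for the $\{1,2,3\}$-labels used by the fixed edges at $x$ and $y$, and $D_x := \{1,2,3\}\setminus F_x$, $D_y := \{1,2,3\}\setminus F_y$, the black-vertex condition forces $\ell(v,x)=D_x$ and $\ell(v,y)=D_y$ in $W$; the white-vertex condition at $v$ then forces $\ell(u,v)=\{1,2,3\}\setminus(D_x\cup D_y)$, so $D_x,D_y$ must be disjoint, and the condition at $u$ together with $|\ell(u,b)|=1$ forces $|D_x\cup D_y|\le 1$. In $W_x$ the vertex $v$ is gone, so consistency requires $D_y=\emptyset$ and $\ell(b,x)=D_x$ with $|D_x|\le1$; symmetrically for $W_y$. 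Up to the symmetry $x\leftrightarrow y$ this yields: case (a) $D_x=D_y=\emptyset$, where $b$ must carry a label $m\ge4$ and all three graphs admit a unique extension of $C$; case (b) $D_x=\{c\}$, $D_y=\emptyset$, where $b$ carries $c\in\{1,2,3\}$ and only $W$ and $W_x$ admit extensions; and the trivial cases ($|D_x\cup D_y|\ge2$, or overlapping fixed labels) where no graph admits an extension and the identity reads $0=0$.

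The weights are immediate: in case (a) no size-two edge is created, so all three coefficients have weight $1$, while in case (b) the edge $(u,v)$ of $W$ carries the size-two label $\{1,2,3\}\setminus\{c\}$, contributing the factor $-\tfrac12$ that must reconcile with the coefficient $\tfrac12$ on $[W_x,\mathcal{O}_x]$. The heart of the argument, and the step I expect to be the main obstacle, is checking that the four sign contributions behave correctly. The boundary word is literally unchanged across the three graphs, so $\mathrm{inv}(\mathrm{bd}(\ell))$ contributes nothing to any relative sign. The decisive observation is that the modification replacing the path $b\to u\to v\to x$ by the edge $b\to x$ is local for the origin and cycle structure: for the color $c$ at $b$ (case (b)) the edges $(u,b)$ and $(v,x)$ lie in $E_c$ while $(u,v)\in I(\mathcal{O})$, so $E_c\Delta\mathcal{O}$ traverses $b\,u\,v\,x$ and continues into the $C$-determined part exactly as $b\,x$ does in $W_x$; for the two other colors the unique local edge $(u,v)$ lies in both $E_i$ and $I(\mathcal{O})$ and hence cancels, so no path or cycle of $E_i\Delta\mathcal{O}$ meets $u$ or $v$. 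Thus all origins are preserved and no cycle is created or destroyed, so the origin-inversion and cycle-parity factors agree for all three graphs. The only surviving discrepancy is the count $\#\{e\in I(\mathcal{O}):|\ell(e)|\text{ even}\}$: the edge $(u,v)\in I(\mathcal{O})$ has even label in case (b) but odd label (size $3$) in case (a), and $W_x$, $W_y$ have no corresponding $I(\mathcal{O})$-edge of even label. Hence this count is unchanged in case (a), forcing $\mathrm{sign}(W)=\mathrm{sign}(W_x)=\mathrm{sign}(W_y)$ and so $\mathrm{coeff}(W)=\tfrac12\mathrm{coeff}(W_x)+\tfrac12\mathrm{coeff}(W_y)$; and it drops by exactly one in case (b), forcing $\mathrm{sign}(W)=-\mathrm{sign}(W_x)$, which combines with the weights $-\tfrac12$ and $1$ to give $\mathrm{coeff}(W)=\tfrac12\mathrm{coeff}(W_x)$ while $\mathrm{coeff}(W_y)=0$. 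This establishes the identity class by class.
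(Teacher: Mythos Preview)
Your proof is correct and follows the paper's approach: fix a partial labelling and compare contributions case by case. In fact you are more careful than the paper, which asserts that $\ell(u,v)$ always has size $2$ and claims a bijection between consistent labellings of $W$ and those of $W_x\sqcup W_y$; your case~(a), where the boundary edge carries some $m\ge 4$ and hence $\ell(u,v)=\{1,2,3\}$, shows that bijection claim fails (there is one labelling of each of $W$, $W_x$, $W_y$), though the identity still holds as $1=\tfrac12+\tfrac12$, and your explicit tracking of the parity of $|\ell(u,v)|$ as an edge of $I(\mathcal{O})$ is exactly what distinguishes the two cases.
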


\begin{proof}
A consistent labelling of $W$ must have an edge label of size one on the boundary edge incident to $u$, and edge label of size 2 on the edge $(u,v)$, and an edge label of size one on exactly one of the edges $(u,x)$ and $(u,y)$. Thus, consistent labellings of $W$ are in bijection with the disjoint union of consistent labellings of $W_x$ and $W_y$, and each carries relative sign and weight $\frac{1}{2}$. Thus
\[
[W,\mathcal{O}] = \frac{1}{2}[W_x, \mathcal{O}_x]  + \frac{1}{2}[W_y, \mathcal{O}_y]
\]
as desired.
\end{proof}

\begin{proposition}[Bivalent vertex reduction rule]
Let $W$ be a normal plabic graph with a black vertex $u$ of degree 2 connected to two white vertices $v_1$ and $v_2$. Let $u_1, u_2, u_3, u_4$ be the other neighbors of $v_1$ and $v_2$. Let $\mathcal{O}$ be a perfect orientation such that $(u_1,v_1)$ and $(u,v_2)$ are in $I(\mathcal{O})$. For $1\leq i \leq 8$, let $W_i$ denote the plabic graphs with orientation $\mathcal{O}_i$ as shown in the bivalent vertex reduction rule above. We have
\[
[W, \mathcal{O}] = -\frac{1}{2}[W_1, \mathcal{O}_1]-\frac{1}{2}[W_2, \mathcal{O}_2]-\frac{1}{2}[W_3, \mathcal{O}_3]+\frac{1}{2}[W_4, \mathcal{O}_4] -\frac{1}{4}[W_5, \mathcal{O}_5]-\frac{1}{4}[W_6, \mathcal{O}_6]-\frac{1}{4}[W_7, \mathcal{O}_7]-\frac{1}{4}[W_8, \mathcal{O}_8]
\]
\end{proposition}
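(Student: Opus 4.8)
The plan is to follow the localization-and-casework strategy already used for the Crossing and Square Reduction Rules (Propositions~\ref{crossing} and~\ref{squarerule}). Fix a labelling $C$ of every edge of $W$ not incident to $u$, $v_1$, or $v_2$; in particular $C$ fixes the labels on all edges leading out of $u_1,u_2,u_3,u_4$. Since the whole configuration is interior, $C$ already determines every boundary-edge label, so the boundary monomial $\mathbf{x}_{\textrm{bd}(\ell)}$ and the undecorated inversion number $\textrm{inv}(\textrm{bd}(\ell))$ are identical for every consistent labelling of $W$ and of each $W_i$ that extends $C$. Consequently the asserted identity splits over the choices of $C$, and for each fixed $C$ it reduces to a scalar identity among the signed weights $\textrm{sign}(\ell,\mathcal{O})\,\textrm{wt}(\ell)$ summed over labellings extending $C$, with the common monomial divided out.

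Next I would enumerate the local labellings. For each outer black vertex $u_j$ write $F_j\subseteq\{1,2,3\}$ for the colours already supplied by its $C$-fixed edges and $M_j=\{1,2,3\}\setminus F_j$ for its deficiency. Whenever $u_j$ meets the local patch through a single interior edge, the black-vertex condition forces that edge to carry exactly $M_j$; the remaining freedom lives on $(u,v_1)$, $(u,v_2)$ and, in the $W_i$, on the edges at the merged vertices, and is governed entirely by the tuple $(M_1,M_2,M_3,M_4)$ together with the white- and black-vertex compatibility conditions. In particular a term in which $u_j$ is dropped (as $u_4$ is in $W_1$) contributes nothing unless $F_j=\{1,2,3\}$, and a pair-merge term $W_{jk}$ contributes only when $F_j$ and $F_k$ partition $\{1,2,3\}$. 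For $W$ and each $W_i$ I would record the number of extending labellings and their weights $\bigl(-\tfrac12\bigr)^{\#\{\textrm{size-two labels}\}}$.

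For the signs, observe that with $\textrm{bd}(\ell)$ held fixed the only varying contributions to $\textrm{sign}(\ell,\mathcal{O})$ are the origin-inversion part of $\textrm{sign}(\tilde{\textrm{bd}}(\ell))$, the parity $\#\{e\in I(\mathcal{O}):|\ell(e)|\textrm{ even}\}$, and the factor counting cycles of length $\equiv 2 \pmod 4$ in each $E_i\Delta\mathcal{O}$. Because the colour-$c$ dimer is fixed outside the patch and enters the patch exactly at the ports $\{u_j:c\in M_j\}$, the key lemma to establish is that the origins of all boundary letters agree between the labelling of $W$ and the corresponding labellings of the $W_i$, so that the origin-inversion terms cancel in the comparison. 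Granting this and using Propositions~\ref{signagree} and~\ref{compatsame} to align the orientations $\mathcal{O},\mathcal{O}_1,\dots,\mathcal{O}_8$ (via swivel moves, Proposition~\ref{swiveldecomp}), the relative sign of each local labelling is pinned down by the even-size-label parity on $I(\mathcal{O})$-edges alone. This is precisely what yields the stated coefficients, and in particular the reversed sign of the $W_4$ term: dropping $u_1$, whose incident edge lies in $I(\mathcal{O})$, flips that parity relative to dropping $u_2$, $u_3$, or $u_4$.

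Finally I would organize the verification into a case table indexed by $(F_1,F_2,F_3,F_4)$, reduced modulo permutations of $\{1,2,3\}$ and the symmetries swapping $u_1\leftrightarrow u_2$, $u_3\leftrightarrow u_4$ and $v_1\leftrightarrow v_2$, listing in each row the signed-weight contribution of $W$ and of $W_1,\dots,W_8$ and checking, after dividing out the common boundary monomial, the scalar version of
\[
[W,\mathcal{O}]=-\tfrac12\bigl([W_1,\mathcal{O}_1]+[W_2,\mathcal{O}_2]+[W_3,\mathcal{O}_3]\bigr)+\tfrac12[W_4,\mathcal{O}_4]-\tfrac14\bigl([W_5,\mathcal{O}_5]+[W_6,\mathcal{O}_6]+[W_7,\mathcal{O}_7]+[W_8,\mathcal{O}_8]\bigr).
\]
I expect the main obstacle to be the sign bookkeeping rather than the weight arithmetic: proving that the local surgery leaves the origin data unchanged and fixing the relative orientation signs $\textrm{sign}(\mathcal{O},\mathcal{O}_i)$ precisely enough to account for the asymmetric $\pm\tfrac12$ and $-\tfrac14$ coefficients. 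Once those relative signs are settled, each row of the table is a short finite check.
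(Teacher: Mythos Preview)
Your proposal is correct and follows essentially the same approach as the paper: the paper's proof simply says ``As per the proof of the square rule, we have four cases to consider'' and then presents a four-row table indexed by the external label sets at $u_1,u_2,u_3,u_4$, exactly the kind of table you describe. Your discussion of the sign bookkeeping (origin invariance, the $I(\mathcal{O})$-parity explanation for the flipped sign on $W_4$) is more explicit than the paper's, but the underlying method is identical; note only that the orientation distinguishes $u_1$ and breaks the $u_1\leftrightarrow u_2$ and $v_1\leftrightarrow v_2$ symmetries you list, which is why the paper's four representative cases all have $F_1=\{1,2,3\}$.
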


\begin{proof}
As per the proof of the square rule, we have four cases to consider as shown in the following table

\begin{center}

\begin{tabular}{|c|c|c|c||c|c|c|c|c|c|c|c|c|}
\hline
Labels at $u_1$ &  $u_2$ & $u_3$ & $u_4$ & $W$ & $W_1$ & $W_2$ & $W_3$ &$W_4$ &$ W_5$ & $W_6$ & $W_7$ & $W_8$  \\\hline
$\{1,2,3\}$ & $\{1,2,3\}$ & $\{1,2,3\}$ & $\emptyset$& 1&0&$-1$&$-1$&1&0&1&1&0\\\hline
$\{1,2,3\}$ & $\{1,2,3\}$ & $\{1,2\}$ & $\{3\}$& $\frac{1}{2}$& 0&0&$-\frac{1}{2}$&$\frac{1}{2}$&0&0&0&0\\\hline
$\{1,2,3\}$ & $\{1,2\}$  & $\{1,3\}$&$\{2,3\}$& $-\frac{1}{2}$ & 0& 0& 0& $-1$ & 0&0&0&0\\\hline
$\{1,2,3\}$ & $\{1,2\}$  & $\{1,2,3\}$& $\{3\}$&$-\frac{1}{4}$ & $0$ &$ \frac{1}{2}$& 0& $-\frac{1}{2}$ & 0&0&-1&0\\\hline
\end{tabular}

\end{center}
\end{proof}

We can now give a proof of Theorem~\ref{basisthm}, which states the set $\{[W, \mathcal{O}_W] \mid W \in AW(n,d)\}$ is a basis of $S^{(d^3 1^{n-3d})}$.

\begin{proof}
Let $W \in AW(n,d)$. Apply the crossing reduction rule to rewrite $s_i \cdot [W, \mathcal{O}_w]$ as a sum of invariants for normal plabic graphs $G_i$ with coefficients $c_i$, i.e.
\[
s_i \cdot [W, \mathcal{O}_w] = \sum_i c_i [G_i, \mathcal{O}_{G_i}]
\]
If any of the $G_i$ are not augmented webs, i.e. they have a face of degree four or a black vertex of degree less than 3, we can apply one of the other skein relations to rewrite $[G_i, \mathcal{O}_{G_i}]$. Each time we do so, we replace a plabic graph with $k$ white vertices by a linear combination of plabic graphs with strictly fewer than $k$ white vertices, so by iterating this process we eventually rewrite $s_i \cdot [W, \mathcal{O}_w]$ as a linear combination of augmented web invariants. Consequently,
\[
\textrm{span}(\{[W, \mathcal{O}_W] \mid W \in AW(n,d)\})
\]
is closed under the action of $\mathfrak{S}_n$. By Theorem~\ref{inspecht} and Proposition~\ref{matchjellyfish}, $\textrm{span}(\{[W, \mathcal{O}_W] \mid W \in AW(n,d)\})$ has a nonzero intersection with $S^{(d^3, 1^{n-3d})}$. By Theorem~\ref{biject}, the dimension of $\textrm{span}(\{[W, \mathcal{O}_W] \mid W \in AW(n,d)\})$ is at most the dimension of $S^{(d^3, 1^{n-3d})}$. Since $S^{(d^3, 1^{n-3d})}$ is irreducible, we have 
\[
\textrm{span}(\{[W, \mathcal{O}_W] \mid W \in AW(n,d)\}) = S^{(d^3, 1^{n-3d})}
\]
and Theorem~\ref{biject} shows that $\{[W, \mathcal{O}_W] \mid W \in AW(n,d)\}$ is indeed a basis.
\end{proof}

\section{Augmented web invariants via weblike subgraphs}

In this section we explain how to interpret our augmented web invariants in terms of the weblike subgraphs introduced by T. Lam in \cite{Lam}. To do so, we first need to reinterpet out augmented web invariants as tensors rather than polynomials in $n\times \nu$ variables.

Let $V_0\cong \mathbb{C}^\nu$ be a $\nu$ dimensional vector space with basis $\{e_1,\dots, e_\nu\}$, let $V = \textrm{span}(\{e_1,e_2,e_3\})$. Consider the space $W$ consisting of the direct sum of all tensor products of $3d$ copies of $V$ and $n-3d$ copies of $\mathbb{C}$, e.g. when $n=4$ and $d=1$, $W$ is 
\[
\left(V \otimes V \otimes V \otimes \mathbb{C} \right) \oplus 
\left(V \otimes V \otimes  \mathbb{C}  \otimes V \right)\oplus\left( V  \otimes \mathbb{C}\otimes V \otimes V \right) \oplus \left(\mathbb{C}\otimes V \otimes V \otimes V\right)
\]
$W$ injects into $V_0^{\otimes n}$ by replacing the $n-3d$ copies of $\mathbb{C}$ with $\bigwedge^{n-3d} \textrm{span}(\{e_4, \dots, e_\nu\})$, e.g. if $v_1, v_2, v_3 \in V$, 
\[
1 \otimes v_1 \otimes v_2 \otimes 1 \otimes v_3 \mapsto e_4 \otimes v_1 \otimes v_2 \otimes e_5 \otimes v_3 - e_5 \otimes v_1 \otimes v_2 \otimes e_4 \otimes v_3
\]
There is also a natural injection of $V_0^{\otimes n}$ into the polynomial ring generated by the $n \times \nu$ variables
\[
\begin{bmatrix}
x_{1,1} & x_{1,2} & \cdots & x_{1,n}\\
x_{2,1} & x_{2,2} & \cdots & x_{2,n}\\
\vdots &  &\ddots& \vdots\\
x_{\nu, 1} &x_{\nu,2} & \cdots &x_{\nu, n}
\end{bmatrix}
\]
given by \[
e_{i_1} \otimes e_{i_2} \otimes \cdots \otimes e_{i_n} \mapsto x_{i_1,1}x_{i_2,2}\cdots x_{i_n,n} 
\]

Recall that our augmented web invariants live in this polynomials ring, and 
furthermore, they live in the span of monomials which contain exactly one variable from each column, each with degree one. Additionally, in each augmented web invariant, the tensor factors corresponding to basis vectors $e_4, \dots, e_{\nu}$ are alternating.  Thus, augmented web invariants live in the image of the injection $\iota: W \hookrightarrow \mathbb{C}[x_{1,1}, \dots, x_{\nu, n}]$. Denote the preimage under this injection of $[W, \mathcal{O}]$ by $\widetilde{[W,\mathcal{O}]}$.

We can make $\iota$ into an $\mathfrak{S}_n$ homomorphism by pulling back the action of $\mathfrak{S}_n$ on  $\mathbb{C}[x_{1,1}, \dots, x_{\nu, n}]$ to $W$. Note that this pullback is {\em not} just simply permuting tensor factors. An adjacent transposition $s_i$ acts on simple basis tensors by 
\[
s_i\cdot (v_1 \otimes \cdots \otimes v_{i} \otimes v_{i+1} \otimes \cdots \otimes v_n) = \begin{cases}-s_i\cdot (v_1 \otimes \cdots \otimes v_{i} \otimes v_{i+1} \otimes \cdots \otimes v_n) & v_1, v_{i+1} \in \mathbb{C}\\ s_i\cdot (v_1 \otimes \cdots \otimes v_{i} \otimes v_{i+1} \otimes \cdots \otimes v_n) & \textrm{otherwise} \end{cases},
\] 
i.e. it picks up a sign if both tensor factors come from $\mathbb{C}$.

The benefit of this viewpoint is that $W$ is more well-studied in terms of webs. Kuperberg's work \cite{Kuperberg} gives a basis for $W$ in terms of $SL_3$ webs with 0 clasps, i.e. $SL_3$ webs with $n-3d$ boundary vertices without edges. We will explain how to expand $\widetilde{[W,\mathcal{O}]}$ into this clasped web basis.

As introduced by Lam, given a normal plabic graph $W$ a 3-weblike subgraph is an assignment of a nonnegative integer to each edge such that the sum of edges around each interior vertex is 3. A weblike subgraph can be turned into an $SL_3$ web with $0$-clasps (i.e. unused boundary vertices) by deleting each edge assigned 0 or 3, and contracting each path of edges alternately assigned 1's and 2's to a single edge. A consistent labelling $\ell$ gives rise to a weblike subgraph $W'(\ell)$ via the sizes of its edge labels. We have the following

\begin{proposition}
Let $W$ be a normal plabic graph with perfect orientation $\mathcal{O}$. Let $W'$ be a 3-weblike subgraph of $W$ with $d(W')$ edges of multiplicity $2$. Then
\[
\iota^{-1}( \sum_{\substack{\ell \in CL(G)\\W'(\ell) = W'}} \textrm{sign}(\ell, \mathcal{O}) \textrm{wt}(\ell) \mathbf{x}_{\textrm{bd}(\ell)}) = \pm(-\frac{1}{2})^{d(W')} [W']_{SL_3}
 \]
 where $[W']_{SL_3}$ denotes the usual $SL_3$ web invariant.  
\end{proposition}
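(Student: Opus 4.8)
The plan is to peel off the three factors on the right-hand side one at a time: the constant weight $(-\tfrac12)^{d(W')}$, the passage through $\iota$ that turns a $0$-clasp web tensor into a polynomial, and the genuine $SL_3$ web invariant $[W']_{SL_3}$. First I would observe that the weight is constant across the sum. Every interior edge of $W$ (joining a black to a white vertex) carries a label contained in $\{1,2,3\}$, and every boundary edge carries a label of size one; hence an edge label has size two precisely when it is a multiplicity-$2$ edge of $W'(\ell)$. Since those edges are fixed once $W'$ is fixed, $\textrm{wt}(\ell)=(-\tfrac12)^{d(W')}$ for every $\ell$ with $W'(\ell)=W'$, and this factor pulls straight out of the sum, matching the right-hand side.

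Next I would record the structure of the labellings with $W'(\ell)=W'$. Such an $\ell$ is the data of (i) a \emph{coloring}, a choice of which of the colors $\{1,2,3\}$ sit on each edge subject to the disjointness-and-union conditions at each vertex, together with (ii) a bijection $\beta$ between $\{4,\dots,\nu\}$ and the $n-3d$ boundary edges of multiplicity $0$. Deleting the multiplicity-$0$ and multiplicity-$3$ edges and contracting each alternating multiplicity-$1$/$2$ path to a single edge turns $W'$ into an honest $SL_3$ web $\overline{W'}$; the forced propagation of the missing color along each contracted path yields a bijection between the colorings in (i) and the proper edge colorings of $\overline{W'}$.

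For the clasp data (ii), the only part of $\textrm{sign}(\ell,\mathcal{O})$ that depends on $\beta$ is the relative order of the letters $4,\dots,\nu$ inside the decorated boundary word: origins, the parities $E_i\Delta\mathcal{O}$, and the set $\{e\in I(\mathcal{O}):|\ell(e)|\text{ even}\}$ all ignore labels $\geq 4$ (those boundary edges have odd label size), while inversions between a clasp letter and a letter of $\{1,2,3\}$ depend only on the fixed set of clasp positions. Summing over $\beta$ therefore produces exactly the alternating sum over orderings of $e_4,\dots,e_\nu$ in the clasp slots, which is by definition the image under $\iota$ of the $0$-clasp tensor with a $\mathbb{C}$-factor at each clasp vertex. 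Applying $\iota^{-1}$ collapses this alternating sum to that single simple tensor, at the cost of one global sign, and reduces the problem to $\overline{W'}$.

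Finally, and this is the crux, I would match the coloring-dependent part of $\textrm{sign}(\ell,\mathcal{O})$ with the $SL_3$ sign $(-1)^{cc}$. The orientation-dependence is harmless, since changing $\mathcal{O}$ rescales every term by the common factor $\textrm{sign}(\mathcal{O},\mathcal{O}')$ (Propositions~\ref{signagree} and~\ref{compatsame}), so I may fix any convenient $\mathcal{O}$; it then remains to show that as the coloring $c$ ranges over the proper edge colorings of $\overline{W'}$, the quantity $\textrm{sign}(\ell_c,\mathcal{O})$ equals $(-1)^{cc(c)}$ up to a sign independent of $c$. I would prove this relative statement by connecting any two proper colorings through Kempe recolorings along two-colored chains — both cycles and paths joining two boundary edges — and checking that each move multiplies $\textrm{sign}(\ell_c,\mathcal{O})$ and $(-1)^{cc(c)}$ by the same sign. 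Swapping colors $a,b$ along such a chain reverses the cyclic color order, hence flips the $cc$-contribution, at every trivalent vertex the chain passes through (an even number for a cycle, arbitrary for a path, the latter also transposing the two boundary labels $a,b$); on the consistent-labelling side the same swap modifies the origin-carrying paths of $E_a\Delta\mathcal{O}$ and $E_b\Delta\mathcal{O}$, the resulting origin inversions, the count of length-$2\pmod 4$ cycles in these symmetric differences, and, for a path, the decorated boundary word. The heart of the argument is the parity bookkeeping verifying that the net change in all of these equals the number of trivalent vertices on the chain modulo $2$. Reconciling this global, orientation-based sign with the purely local crossing count $cc$ is the main obstacle; the remaining steps are routine.
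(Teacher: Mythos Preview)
Your approach is essentially the same as the paper's: reduce to a coloring-by-coloring comparison and then verify that the two sign conventions agree up to a global sign by connecting any two proper edge colorings of $\overline{W'}$ via Kempe swaps along two-colored alternating chains, checking that each swap flips both signs. The paper's proof is terser---it folds your steps (weight factor, handling of labels $\geq 4$ via $\iota$, contraction to $\overline{W'}$) into a single sentence and then asserts without detailed verification that each alternating swap changes both the proper-edge-coloring sign and the consistent-labelling sign---so your more explicit decomposition and your honest flagging of the parity bookkeeping as the crux are, if anything, improvements in exposition rather than a different argument.
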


\begin{proof}
Up to a reordering of labels larger than 3, consistent labellings $\ell$ with $W'(\ell) = W'$ are in bijection with proper edge labellings of $W'$. So it suffices to check that the difference in the definition of sign for consistent labellings and proper edge labellings is the same among all such labellings. Any two proper edge labellings of $W'$ can be transformed into eachother via swapping the labels of any path alternately labelled $i,j,\dots,i,j$ for $1 \leq i \leq j$. Swapping such a path will introduce a sign change both in the definition of sign for proper edge labellings and for consistent labellings.
\end{proof}

\begin{corollary}
We thus have
\[
\widetilde{[W, \mathcal{O}]} = \sum_{\textrm{weblike subgraphs } W' \textrm{ of } W} \pm (\frac{1}{2})^{d(W')}[W']_{SL_3}
\]
\end{corollary}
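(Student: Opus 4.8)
The plan is to derive the corollary directly from the preceding proposition, using the linearity of $\iota^{-1}$ together with a partition of the consistent labellings of $W$ according to the weblike subgraph they induce. First I would recall that, by definition,
\[
[W, \mathcal{O}] = \sum_{\ell \in CL(W)} \textrm{sign}(\ell, \mathcal{O})\, \textrm{wt}(\ell)\, \mathbf{x}_{\textrm{bd}(\ell)},
\]
and that each consistent labelling $\ell$ determines a unique $3$-weblike subgraph $W'(\ell)$ through the sizes of its edge labels. Hence the assignment $\ell \mapsto W'(\ell)$ partitions $CL(W)$ into fibers indexed by the $3$-weblike subgraphs $W'$ of $W$.

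Next, since $\iota$ is a linear injection and every boundary monomial $\mathbf{x}_{\textrm{bd}(\ell)}$ lies in its image (the invariant is supported on monomials with one variable per column, alternating in the factors $e_4, \dots, e_\nu$), the map $\iota^{-1}$ is linear on the span of these monomials. I would therefore apply $\iota^{-1}$ termwise and regroup the sum along the fibers of $\ell \mapsto W'(\ell)$:
\[
\widetilde{[W, \mathcal{O}]} = \iota^{-1}\bigl([W, \mathcal{O}]\bigr) = \sum_{W'} \iota^{-1}\!\left( \sum_{\substack{\ell \in CL(W)\\ W'(\ell) = W'}} \textrm{sign}(\ell, \mathcal{O})\, \textrm{wt}(\ell)\, \mathbf{x}_{\textrm{bd}(\ell)} \right),
\]
where the outer sum ranges over all $3$-weblike subgraphs $W'$ of $W$. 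Applying the proposition to each inner sum replaces it by $\pm(-\tfrac{1}{2})^{d(W')} [W']_{SL_3}$, and absorbing the factor $(-1)^{d(W')}$ into the undetermined sign via $(-\tfrac{1}{2})^{d(W')} = (-1)^{d(W')}(\tfrac{1}{2})^{d(W')}$ yields exactly the claimed expression.

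The only point requiring genuine care — the nearest thing to an obstacle, since the substantive content has already been discharged in the proposition — is the bookkeeping of the degenerate fibers. One must confirm that a weblike subgraph $W'$ contributing no consistent labelling (because it admits no proper edge coloring compatible with the boundary word) also contributes nothing on the $SL_3$ side. This holds because in that case both the inner sum over $\ell$ and the defining sum for $[W']_{SL_3}$ are empty, so extending the outer sum from the weblike subgraphs actually realized by some $\ell$ to \emph{all} weblike subgraphs of $W$ introduces no spurious terms. With this observation the two sides match term by term, and the identity follows.
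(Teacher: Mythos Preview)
Your proposal is correct and is exactly the immediate derivation the paper intends: the corollary is stated without proof (the phrase ``We thus have'' signals it follows at once from the proposition), and your argument---partition $CL(W)$ by the induced weblike subgraph, apply $\iota^{-1}$ linearly, invoke the proposition on each fiber, and absorb $(-1)^{d(W')}$ into the ambient sign---is precisely that derivation spelled out. Your remark on degenerate fibers is a reasonable bit of extra care but not strictly needed, since the proposition already covers every weblike subgraph via the bijection with proper edge colorings.
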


\begin{remark}
Note that when a normal plabic graph is an $SL_3$ web, i.e. all vertices are degree 3, our invariants do not match the usual $SL_3$ invariants. Instead, we get a sum over all $SL_3$ webs which appear as a weblike subgraph in $W$.
\end{remark}

\section{Cyclic sieving}

The basis $\{[W, \mathcal{O}_W] \mid W \in AW(n,d)\}$ given in Theorem~\ref{basis} has all the necessary properties to obtain a cyclic sieving result via Springer's theorem of regular elements. The only detail left is that we need to be careful about the orientations. To address orientation, we need the following Lemma:

\begin{lemma}
\label{rotatorient}
Let $W \in AW(n,d)$ with perfect orientation $\mathcal{O}$. Suppose $W$ is fixed by rotation by $i \geq 2$, i.e. $\textrm{rot}^i(W) = W$. Then $i$ divides $n$ and exactly one of the following holds:
\begin{itemize}
\item $\frac{n}{i} \mid d$
\item $\frac{n}{i} \mid d-1$
\item $\frac{n}{i}=3 \textrm{ and } \frac{n}{i} \mid d+1$
\end{itemize}
Let $k = \frac{di}{n}, \frac{(d-1)i}{n}, \frac{(d+1)i}{n}$ depending on which of the cases above holds. Then we have
\[
\textrm{sign}(\mathcal{O}, \textrm{rot}^i(\mathcal{O})) = (-1)^{(\frac{n}{i}-1)k}
\]
The relevance here is that this sign depends only on $n, d,$ and $i$, not on $W$ itself.
\end{lemma}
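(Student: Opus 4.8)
The plan is to separate the two assertions: the divisibility trichotomy and the value of $k$ are a matter of counting vertex orbits, whereas the sign, once a convenient orientation is fixed, reduces to the sign of a permutation of sinks. Set $\rho = \textrm{rot}^i$ and let $p = n/i$ be the order of this rotation, so $\langle\rho\rangle \cong \mathbb{Z}/p$ acts on $W$. The first preliminary step is to record that $\textrm{sign}(\mathcal{O}, \rho(\mathcal{O}))$ does not depend on the choice of $\mathcal{O}$. Using the cocycle identity of Proposition~\ref{signagree} (together with $\textrm{sign}(\mathcal{O}_1,\mathcal{O}_2)\textrm{sign}(\mathcal{O}_2,\mathcal{O}_1)=1$) and the fact that $\rho$, being a graph automorphism that transports sink orderings, preserves the number of swivel moves and conjugates the sink permutation, so that $\textrm{sign}(\rho\mathcal{O}_1, \rho\mathcal{O}_2) = \textrm{sign}(\mathcal{O}_1, \mathcal{O}_2)$, one computes $\textrm{sign}(\mathcal{O}_1, \rho\mathcal{O}_1) = \textrm{sign}(\mathcal{O}_1,\mathcal{O}_2)\textrm{sign}(\mathcal{O}_2,\rho\mathcal{O}_2)\textrm{sign}(\mathcal{O}_2,\mathcal{O}_1) = \textrm{sign}(\mathcal{O}_2,\rho\mathcal{O}_2)$. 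Thus I may compute with any single orientation.

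For the trichotomy, let $B$ and $\Omega$ be the numbers of interior black and white vertices, so $d = B - \Omega$. The center $z$ of the disk is the unique $\rho$-fixed point, hence lies in a face, on an interior edge, or at an interior vertex. An interior edge through $z$ cannot be $\rho$-invariant, since its two endpoints would be identified or swapped, contradicting the normal/bipartite condition; so that case does not occur. If $z$ lies in a face then $\rho$ acts freely on all interior vertices, giving $p\mid B$, $p\mid\Omega$, and $p\mid d$. If $z$ is a black vertex it is the unique fixed vertex, whence $p\mid B-1$, $p\mid\Omega$, and $p\mid d-1$. If $z$ is a white vertex then its three edges are cyclically permuted by $\rho$, forcing $p\mid 3$, so $p=3$ and $p\mid d+1$. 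In each case, writing $b$ and $w$ for the numbers of free black and white orbits, one checks $k = b-w$ equals $d/p$, $(d-1)/p$, or $(d+1)/p$, i.e. $di/n$, $(d-1)i/n$, or $(d+1)i/n$, as claimed.

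Next I would compute the sign. When $p$ is odd I use only the cocycle structure: telescoping gives $\prod_{j=0}^{p-1}\textrm{sign}(\rho^j\mathcal{O},\rho^{j+1}\mathcal{O}) = \textrm{sign}(\mathcal{O},\mathcal{O}) = 1$, and since each factor equals $\textrm{sign}(\mathcal{O},\rho\mathcal{O})$, we get $\textrm{sign}(\mathcal{O},\rho\mathcal{O})^p = 1$; as this sign is $\pm1$ and $p$ is odd, it must be $+1$, agreeing with $(-1)^{(p-1)k}$ because $p-1$ is even. This settles Case 3 (where $p=3$) entirely, as well as all odd instances of Cases 1 and 2. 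When $p$ is even we are necessarily in Case 1 or 2 (a central white vertex would force $p=3$), so $\rho$ acts freely on the white vertices. Here I would exhibit a $\rho$-invariant perfect orientation $\mathcal{O}$. Its edge orientation is then literally fixed by $\rho$, so $I(\rho\mathcal{O}) = I(\mathcal{O})$ and the two orientations have the same sink set $S$, differing only by the reordering $\rho|_S$; by the swivel-move description of relative sign (the remark after Proposition~\ref{signagree}) no swivels are required and $\textrm{sign}(\mathcal{O},\rho\mathcal{O}) = \textrm{sign}(\rho|_S)$. A central black vertex (Case 2) is forced to be a sink and is fixed, while the remaining sinks fall into $k$ free orbits of size $p$, each a $p$-cycle of sign $(-1)^{p-1}$; hence $\textrm{sign}(\rho|_S) = (-1)^{(p-1)k}$, exactly the claimed value.

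The main obstacle is the construction of the $\rho$-invariant perfect orientation in the even-$p$ case. I would first realize the underlying data as a matching in which each white vertex selects its unique in-edge and each black vertex is selected at most once, so the problem becomes finding a $\rho$-equivariant such matching. Passing to the bipartite graph of white- and black-orbits, I would deduce Hall's condition on the quotient from Hall's condition on $W$ itself (which holds since $W$ is reduced by Proposition~\ref{reduced} and therefore admits perfect orientations): a short divisibility estimate shows $|N(\bar S)|\ge|\bar S|$, with a fixed central black vertex contributing only a harmless $+1$ to the count. To promote this matching to a genuine, reachable perfect orientation I would remove directed cycles $\rho$-equivariantly, rotating the matching along entire orbits of alternating cycles until the orientation is acyclic, so that every non-sink has out-degree one and reaches a sink. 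Checking that this equivariant cycle-removal terminates and never forces incompatible choices on overlapping orbits near the center is the most delicate step, and is where I expect the real work to lie.
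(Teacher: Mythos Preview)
Your approach is genuinely different from the paper's, and in parts it is cleaner. Your analysis of the trichotomy by locating the centre of the disc is direct and correct; the paper instead absorbs this into its induction. Your treatment of odd $p=n/i$ is particularly nice: the telescoping identity $\prod_{j=0}^{p-1}\textrm{sign}(\rho^j\mathcal{O},\rho^{j+1}\mathcal{O})=\textrm{sign}(\mathcal{O},\mathcal{O})=1$ together with $\rho$-equivariance of the relative sign forces $\textrm{sign}(\mathcal{O},\rho\mathcal{O})=+1$ with no further work, whereas the paper does not separate parities and runs the same induction in every case.

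The paper's route is an induction on the number of interior white vertices. The base cases are the zero- and one-white-vertex webs, handled by a direct computation of the permutation of black vertices induced by $\rho$ (for one white vertex one also accounts for a single swivel). For the inductive step, Lemma~\ref{induct} supplies a black vertex with a unique white neighbour; its $\rho$-orbit, together with the orbit of that white neighbour, is removed and the dangling edges are rerouted to the boundary $\rho$-equivariantly, yielding a smaller rotation-invariant augmented web $W'$. An orientation of $W'$ extends to one of $W$ by directing the removed edges white-to-black, and one checks the relative sign is preserved.

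Where your proposal has a real gap is exactly where you flag it: in the even-$p$ case you need a $\rho$-invariant \emph{perfect} orientation, not merely a $\rho$-equivariant matching. Your Hall argument on the quotient correctly produces an equivariant choice of in-edge at every white vertex, but the reachability clause in the definition of perfect orientation (every vertex admits a directed path to a sink) is not automatic, and ``rotating the matching along orbits of alternating cycles until acyclic'' is not obviously a terminating or even progressing procedure: rematching along a directed cycle simply reverses that cycle rather than destroying it, and orbits of distinct cycles can interact. This step, as written, is not a proof. A clean fix is to abandon the global construction for even $p$ and instead borrow the paper's inductive reduction just there: when $p$ is even there is no central white vertex, so Lemma~\ref{induct} lets you strip off an entire free $\rho$-orbit of black--white pairs, reducing to a smaller rotation-invariant web with the same $p$ and the same $k$; the base case (no white vertices) is then exactly your sink-permutation computation $\textrm{sign}(\rho|_S)=(-1)^{(p-1)k}$.
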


\begin{proof}
Note that by Lemma~\ref{signagree} it suffices to prove this for some orientation $\mathcal{O}$. We proceed by induction on the number of interior white vertices. If there are no interior white vertices, then a perfect orientation is simply a total order on the black vertices. Rotation by $i$ induces a permutation on the black vertices with at most one cycle of size 1 and cycles of size $\frac{n}{i}$, and thus $\frac{n}{i} \mid d$ or $\frac{n}{i} \mid d-1$. Thus, 
\[
\textrm{sign}(\mathcal{O}, \textrm{rot}^i(\mathcal{O})) = (-1)^{(\frac{n}{i} -1) k }
\]

If there is a single white vertex $v$, it is necessarily fixed by rotation by $i$, and thus $\frac{n}{i}=3$. Rotation by $i$ induces a permutation of the $d+1$ black vertices into cycles of size $3$, and thus $3 \mid d+1$. The orientation $\mathcal{O}_W$ differs from $\textrm{rot}^i (W)$ via a swivel move at $v$, a transposition of the two sink vertices adjacent to $v$, and a 3-cycle applied to each other orbit of sinks. Thus,
\[
\textrm{sign}(\mathcal{O}, \textrm{rot}^i(\mathcal{O})) = 1 = (-1)^{(\frac{n}{i} -1) k }
\]

If there is more than one white vertex, then by Lemma~\ref{induct} and our rotation invariance assumption, we can find three black vertices each connected to exactly one interior white vertex such that these three white vertices are distinct. Remove these 6 vertices and connect their neighbors to the boundary in a planar and rotationally invariant way to get a web $W'$. From any perfect orientation $\mathcal{O}'$ of $W'$ we can build a perfect orientation $\mathcal{O}$ of $W$ by orienting the removed edges from white vertex to black vertex. We then have by inductive hypothesis
\[
\textrm{sign}(\mathcal{O}, \textrm{rot}^i(\mathcal{O})) = \textrm{sign}(\mathcal{O'}, \textrm{rot}^i(\mathcal{O'})) = (-1)^{(\frac{n}{i} -1) k }.
\]
So the result follows by induction.
\end{proof}

We can now state our cyclic sieving result.

\begin{theorem}
Let $C= \mathbb{Z}/n\mathbb{Z}$ be the cyclic group with generator $c$ acting on $AW(n,d)$ by rotation. Let $X_{n,d}(q)$ be the fake degree polynomial for $S^(d^3, 1^{n-3d})$, i.e.
\[
X_{n,d}(q) = q^{3(d-1)+\binom{n-3(d-1)}{2}}\frac{[n]!_q}{\prod_{(i,j) \in \lambda} [h_{ij}]_q}
\]

If $n$ is odd, then the triple
\[
(AW(n,d), C, X_{n,d}(q))
\]
exhibits the cyclic sieving phenomenon.

If $n$ is even, then we have cyclic sieving up to sign, i.e.
\[
|AW(n,d)^{c^i}| = |X_{n,d}(\zeta^i)|
\]
where $AW(n,d)^{c^i}$ dentoes the fixed point set of $AW(n,d)$ under the action of $c^i$, and $\zeta$ is a primitive $n^{th}$ root of unity.
\end{theorem}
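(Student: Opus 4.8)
The plan is to realize the rotation action as a (signed) permutation of the basis from Theorem~\ref{basisthm} and feed this into Springer's theorem, Theorem~\ref{sieving}, taking $\mathfrak{S}_n$ as the complex reflection group and the long cycle $c = (1 2 \cdots n)$ as the regular element generating $C$; the $n$-cycle is the Coxeter element of $\mathfrak{S}_n$, hence a regular element of order $n$ whose regular eigenvalue is a primitive $n$-th root of unity $\zeta$. First I would record that the prefactor in $X_{n,d}(q)$ is exactly $b(\lambda)$ for $\lambda = (d^3,1^{n-3d})$, via the short computation $b(\lambda)=1\cdot d+2\cdot d+\sum_{i=4}^{\,n-3d+3}(i-1)=3(d-1)+\binom{n-3(d-1)}{2}$, so that $X_{n,d}(q)$ is precisely the fake degree polynomial $F^{S^\lambda}(q)$ of the Proposition.

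The core of the argument is to compute the scalar by which $c^i$ acts on a basis vector that it fixes. Iterating Corollary~\ref{rotationreflect} gives $c^i \cdot [W,\mathcal{O}_W] = (-1)^{i(n-1)}[\textrm{rot}^i(W), \textrm{rot}^i(\mathcal{O}_W)]$, and when $\textrm{rot}^i(W)=W$ this rewrites, using the orientation-sign relation together with Lemma~\ref{rotatorient}, as $c^i \cdot [W,\mathcal{O}_W] = s(n,d,i)\,[W,\mathcal{O}_W]$ where $s(n,d,i) = (-1)^{i(n-1)}(-1)^{(\frac{n}{i}-1)k}$. The decisive feature, supplied by Lemma~\ref{rotatorient}, is that $s(n,d,i)$ is \emph{independent of $W$}. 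Writing the trace of $c^i$ on $V = S^{(d^3,1^{n-3d})}$ in this basis, only the webs fixed by $\textrm{rot}^i$ contribute to the diagonal and each contributes the same scalar, so $\chi^V(c^i) = s(n,d,i)\,|AW(n,d)^{c^i}|$.

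For $n$ odd I would show $s(n,d,i)=1$: since $n-1$ is even, $(-1)^{i(n-1)}=1$, and since $\tfrac{n}{i}$ divides the odd number $n$ it is odd, making $(\tfrac{n}{i}-1)k$ even. Consequently the orbitwise assignment $\epsilon_W \in \{\pm1\}$ determined by $\epsilon_{\textrm{rot}(W)} = (-1)^{n-1}\,\textrm{sign}(\textrm{rot}(\mathcal{O}_W),\mathcal{O}_{\textrm{rot}(W)})\,\epsilon_W$ is consistent around every rotation orbit, because the product of transition signs around an orbit of size $m$ equals $s(n,d,m)=1$. Hence $B' = \{\epsilon_W[W,\mathcal{O}_W]\}$ is a basis genuinely permuted by $c$, so Theorem~\ref{sieving} applied to $(\mathfrak{S}_n, C, V, B')$ yields cyclic sieving for $(B', C, X_{n,d}(q))$; as rescaling by signs is $c$-equivariant, the identification $B' \leftrightarrow AW(n,d)$ gives $|B'^{c^i}| = |AW(n,d)^{c^i}|$, proving the odd case.

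For $n$ even the rescaling can fail, since $s(n,d,i)$ may equal $-1$, so I would argue directly from the trace identity: Springer's theorem for the regular element $c$ (the content underlying Theorem~\ref{sieving}) gives $\chi^V(c^i) = F^V(\zeta^i) = X_{n,d}(\zeta^i)$, and combining this with $\chi^V(c^i) = s(n,d,i)\,|AW(n,d)^{c^i}|$ and $|s(n,d,i)|=1$ yields $|AW(n,d)^{c^i}| = |X_{n,d}(\zeta^i)|$. The main obstacle is the sign bookkeeping of the second paragraph: assembling $s(n,d,i)$ correctly from the $(-1)^{n-1}$ of Corollary~\ref{rotationreflect} and the orientation sign of Lemma~\ref{rotatorient}, and in particular exploiting that the latter is independent of the individual web. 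It is exactly this uniformity that rescues the even case \emph{up to sign}, and were the sign to depend on $W$ the absolute-value comparison would break down.
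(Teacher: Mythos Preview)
Your argument is correct and follows essentially the same route as the paper's proof: invoke Corollary~\ref{rotationreflect} to see that the long cycle acts on the augmented web basis by a signed permutation, use Lemma~\ref{rotatorient} to show the diagonal signs are uniform (and equal to $+1$ when $n$ is odd), and then feed this into Springer's theorem (Theorem~\ref{sieving}) for the trace identity $\chi^V(c^i)=X_{n,d}(\zeta^i)$. The only cosmetic difference is that the paper absorbs the signs by choosing the orientations $\mathcal{O}_{\mathrm{rot}^j(W)}=\mathrm{rot}^j(\mathcal{O}_W)$ along each orbit, whereas you instead fix arbitrary orientations and rescale by $\epsilon_W$; these two devices are equivalent, and your telescoping check that the product of transition signs around an orbit equals $s(n,d,m)$ is exactly the consistency condition the paper verifies via Lemma~\ref{rotatorient}.
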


\begin{proof}
If $n$ is odd, then we can choose orientations $\mathcal{O}_W$ for each web $W \in AW(n,d)$ such that 
\[
c \cdot [W, \mathcal{O}_W] = [\textrm{rot}(W), \mathcal{O}_{\textrm{rot}(W)}]
\]
To do so, select a web $W$ from each $C$-orbit and pick any orientation $\mathcal{O}_W$ for it. For $1 \leq i \leq n$, let $\mathcal{O}_{\textrm{rot}^i(W)} = \textrm{rot}^i(\mathcal{O}_W)$. Lemma~\ref{rotatorient} guarantees that this is possible even if $W$ has rotational symmetry, as $(-1)^{(\frac{n}{i}-1)k} = 1$.

Thus, $S^{(d^3, 1^{n-3d})}$, $\{[W, \mathcal{O}_W] \mid W \in AW(n,d)$ and the rotation action of $C$ satisfy the hypotheses of Theorem~\ref{sieving} and the result follows.

If $n$ is even, choose any orientation for each web $W \in AW(n,d)$. Then $\{[W, \mathcal{O}_W] \mid W \in AW(n,d)$ is not necessarily fixed by the action of $c$, but $c$ will act via a {\em signed} permutation matrix. Lemma~\ref{rotatorient} shows that the diagonal of $c^{i}$ will either contain only $0$'s and $1$'s or only $0$'s and $-1$'s. In either case, $|tr(c^i)| = |AW(n,d)^{c^i}|$, and the proof of Theorem~\ref{sieving} \cite{Sagansurvey} shows that 
\[
|AW(n,d)^{c^i}| = |X_{n,d}(\zeta^i)|
\]
holds as desired.
\end{proof}

\begin{example}
When $n=10$, $d=3$, then $X_{10,3}(q)$ has a rather nice form. The hook lengths are 
\begin{center}
\begin{ytableau}
6&4&3\\
5&3&2\\
4&2&1\\
1
\end{ytableau}
\end{center}
and thus $X_{10,3}(q) = q^{12} \begin{bmatrix} 10\\4 \end{bmatrix}_q$. Since $X_{10,3}(q)$ is a single $q$-binomial ($n=10, d=3$ is the only case for which this is true), we can verify the cyclic sieving result in this case by checking that the orbits of $AW(10,3)$ under rotation are in size-preserving bijection with the orbits of size 4 subsets of $\{1,\dots, 10\}$ under cyclic permutation. There are two orbits of size 5 for each set, the orbits containing sequences $\{1,2,6,7\}$ and $\{1,3,6,8\}$ and the orbits containing webs shown below
\begin{center}
\begin{tikzpicture}
\equiccnolabels[1cm]{10}{0}{0}
\coordinate (p1) at (0,0);
\coordinate (p2) at (0,.5);
\coordinate (p3) at (0,-.5);

\draw[thick] (N8)--(p1)--(N7);
\draw[thick] (N2)--(p1)--(N3);
\draw[thick] (N9)--(p2)--(N10);
\draw[thick] (N1)--(p2);
\draw[thick] (N4)--(p3)--(N5);
\draw[thick] (p3)--(N6);
\end{tikzpicture}\hspace{1in}
\begin{tikzpicture}
\equiccnolabels[1cm]{10}{0}{0}
\coordinate (p1) at (0,0);
\coordinate (p2) at (-.3,0);
\coordinate (p3) at (.3,0);
\coordinate (p4) at (-.5,.3);
\coordinate (p5) at (-.5,-.3);
\coordinate (p6) at (.5,.3);
\coordinate (p7) at (.5,-.3);

\draw[thick] (N10)--(p1)--(N5);
\draw[thick] (N9)--(p4)--(N8);
\draw[thick] (N7)--(p5)--(N6);
\draw[thick] (N4)--(p7)--(N3);
\draw[thick] (N1)--(p6)--(N2);
\draw[thick] (p6)--(p3)--(p7);
\draw[thick] (p4)--(p2)--(p5);
\draw[thick] (p3)--(p1)--(p2);
\end{tikzpicture}
\end{center}

The remaining 20 orbits are all size 10, one web from each is shown below

\begin{center}
\begin{tikzpicture}
\equiccnolabels[1cm]{10}{0}{0}
\coordinate (p1) at (.3,0);
\coordinate (p2) at (-.3,-.3);
\coordinate (p3) at (-.3,.3);

\draw[thick] (N10)--(p3)--(N9);
\draw[thick] (N8)--(p3);
\draw[thick] (N7)--(p2)--(N6);
\draw[thick] (N5)--(p2)--(N5);
\draw[thick] (N4)--(p1)--(N3);
\draw[thick] (N1)--(p1)--(N2);
\end{tikzpicture}
\begin{tikzpicture}
\equiccnolabels[1cm]{10}{0}{0}
\coordinate (p1) at (0,0);
\coordinate (p2) at (-.3,-.3);
\coordinate (p3) at (.3,.3);

\draw[thick] (N8)--(p1)--(N9);
\draw[thick] (N4)--(p1);
\draw[thick] (N7)--(p2)--(N6);
\draw[thick] (N5)--(p2)--(N5);
\draw[thick] (N10)--(p3)--(N3);
\draw[thick] (N1)--(p3)--(N2);
\end{tikzpicture}
\begin{tikzpicture}
\equiccnolabels[1cm]{10}{0}{0}
\coordinate (p1) at (0,0);
\coordinate (p2) at (-.3,-.3);
\coordinate (p3) at (.3,.3);

\draw[thick] (N8)--(p1)--(N9);
\draw[thick] (N3)--(p1);
\draw[thick] (N7)--(p2)--(N6);
\draw[thick] (N5)--(p2)--(N4);
\draw[thick] (N10)--(p3);
\draw[thick] (N1)--(p3)--(N2);
\end{tikzpicture}
\begin{tikzpicture}
\equiccnolabels[1cm]{10}{0}{0}
\coordinate (p1) at (0,0);
\coordinate (p2) at (-.3,-.3);
\coordinate (p3) at (.5,0);

\draw[thick] (N8)--(p1)--(N9);
\draw[thick] (N10)--(p1)--(N4);
\draw[thick] (N3)--(p3);
\draw[thick] (N7)--(p2)--(N6);
\draw[thick] (N5)--(p2);
\draw[thick] (N1)--(p3)--(N2);
\end{tikzpicture}
\begin{tikzpicture}
\equiccnolabels[1cm]{10}{0}{0}
\coordinate (p1) at (-.3,-.3);

\coordinate (p2) at (0,0);
\coordinate (p3) at (-.3,.3);
\coordinate (p4) at (.3,.3);
\coordinate (p5) at (.3,-.3);

\draw[thick] (N8)--(p3)--(N9);
\draw[thick] (N10)--(p4)--(N1);
\draw[thick] (N2)--(p4);
\draw[thick] (N3)--(p5)--(N4);
\draw[thick] (p3)--(p2)--(p4);
\draw[thick] (p5)--(p2);
\draw[thick] (N7)--(p1)--(N6);
\draw[thick] (N5)--(p1);

\end{tikzpicture}
\begin{tikzpicture}
\equiccnolabels[1cm]{10}{0}{0}
\coordinate (p1) at (-.3,-.3);

\coordinate (p2) at (0,0);
\coordinate (p3) at (-.3,.3);
\coordinate (p4) at (.3,.3);
\coordinate (p5) at (.3,-.3);

\draw[thick] (N8)--(p3)--(N9);
\draw[thick] (N10)--(p3);
\draw[thick] (p4)--(N1);
\draw[thick] (N2)--(p4);
\draw[thick] (N3)--(p5)--(N4);
\draw[thick] (p3)--(p2)--(p4);
\draw[thick] (p5)--(p2);

\draw[thick] (N7)--(p1)--(N6);
\draw[thick] (N5)--(p1);

\end{tikzpicture}
\begin{tikzpicture}
\equiccnolabels[1cm]{10}{0}{0}
\coordinate (p1) at (-.3,-.3);

\coordinate (p2) at (0,0);
\coordinate (p3) at (-.3,.3);
\coordinate (p4) at (.3,.3);
\coordinate (p5) at (.3,-.3);

\draw[thick] (N8)--(p3)--(N9);
\draw[thick] (N2)--(p5);
\draw[thick] (p4)--(N1);
\draw[thick] (N10)--(p4);
\draw[thick] (N3)--(p5)--(N4);
\draw[thick] (p3)--(p2)--(p4);
\draw[thick] (p5)--(p2);

\draw[thick] (N7)--(p1)--(N6);
\draw[thick] (N5)--(p1);

\end{tikzpicture}
\begin{tikzpicture}
\equiccnolabels[1cm]{10}{0}{0}
\coordinate (p1) at (-.3,-.3);

\coordinate (p2) at (.2,.2);
\coordinate (p3) at (0,0);
\coordinate (p4) at (.5,0);
\coordinate (p5) at (0,.5);

\draw[thick] (N8)--(p3)--(N4);
\draw[thick] (N9)--(p5);
\draw[thick] (N10)--(p5)--(N1);
\draw[thick] (N3)--(p4)--(N2);
\draw[thick] (p3)--(p2)--(p4);
\draw[thick] (p5)--(p2);

\draw[thick] (N7)--(p1)--(N6);
\draw[thick] (N5)--(p1);

\end{tikzpicture}
\begin{tikzpicture}
\equiccnolabels[1cm]{10}{0}{0}
\coordinate (p1) at (-.3,-.3);

\coordinate (p2) at (.2,.2);
\coordinate (p3) at (0,0);
\coordinate (p4) at (.5,0);
\coordinate (p5) at (0,.5);

\draw[thick] (N8)--(p3)--(N4);
\draw[thick] (N10)--(p5)--(N9);
\draw[thick] (N3)--(p4)--(N2);
\draw[thick] (N1)--(p4);
\draw[thick] (p3)--(p2)--(p4);
\draw[thick] (p5)--(p2);

\draw[thick] (N7)--(p1)--(N6);
\draw[thick] (N5)--(p1);

\end{tikzpicture}
\begin{tikzpicture}
\equiccnolabels[1cm]{10}{0}{0}
\coordinate (p1) at (-.3,-.3);

\coordinate (p2) at (.2,.2);
\coordinate (p3) at (0,0);
\coordinate (p4) at (.5,.3);
\coordinate (p5) at (0,.5);

\draw[thick] (N8)--(p3)--(N4);
\draw[thick] (N10)--(p5)--(N9);
\draw[thick] (N1)--(p4)--(N2);
\draw[thick] (N3)--(p3);
\draw[thick] (p3)--(p2)--(p4);
\draw[thick] (p5)--(p2);

\draw[thick] (N7)--(p1)--(N6);
\draw[thick] (N5)--(p1);

\end{tikzpicture}
\begin{tikzpicture}
\equiccnolabels[1cm]{10}{0}{0}
\coordinate (p1) at (-.3,-.3);

\coordinate (p2) at (.2,.2);
\coordinate (p3) at (0,0);
\coordinate (p4) at (.5,0);
\coordinate (p5) at (.3,.5);

\draw[thick] (N8)--(p3)--(N4);
\draw[thick] (N10)--(p5)--(N1);
\draw[thick] (N3)--(p4)--(N2);
\draw[thick] (N9)--(p3);
\draw[thick] (p3)--(p2)--(p4);
\draw[thick] (p5)--(p2);

\draw[thick] (N7)--(p1)--(N6);
\draw[thick] (N5)--(p1);

\end{tikzpicture}
\begin{tikzpicture}
\equiccnolabels[1cm]{10}{0}{0}
\coordinate (p1) at (-.3,-.3);

\coordinate (p2) at (.2,.2);
\coordinate (p3) at (0,0);
\coordinate (p4) at (.5,0);
\coordinate (p5) at (.3,.5);

\draw[thick] (p3)--(N4);
\draw[thick] (N10)--(p5)--(N1);
\draw[thick] (N3)--(p4)--(N2);
\draw[thick] (N9)--(p3);
\draw[thick] (p3)--(p2)--(p4);
\draw[thick] (p5)--(p2);

\draw[thick] (N7)--(p1)--(N6);
\draw[thick] (N5)--(p1)--(N8);

\end{tikzpicture}
\begin{tikzpicture}
\equiccnolabels[1cm]{10}{0}{0}
\coordinate (p1) at (-.3,-.3);

\coordinate (p2) at (.2,.2);
\coordinate (p3) at (-.3,.3);
\coordinate (p4) at (.3,-.3);
\coordinate (p5) at (.4,.4);

\draw[thick] (N10)--(p3)--(N9);
\draw[thick] (N1)--(p5)--(N2);
\draw[thick] (N3)--(p4)--(N4);
\draw[thick] (p3)--(p2)--(p4);
\draw[thick] (p5)--(p2);

\draw[thick] (N7)--(p1)--(N6);
\draw[thick] (N5)--(p1)--(N8);

\end{tikzpicture}

\begin{tikzpicture}
\equiccnolabels[1cm]{10}{0}{0}
 \foreach \i in {1,...,6} {
    \coordinate (p\i) at ($(90-\i*360/6:.4)$);}

\foreach \i in {1,...,3} {
    \coordinate (q\i) at ($(190-\i*340/3:.7)$);}

\draw[thick] (N10)--(q1)--(N1);
\draw[thick] (N2)--(p1);
\draw[thick] (N3)--(q2)--(N4);
\draw[thick] (N5)--(p3);
\draw[thick] (N6)--(q3)--(N7);
\draw[thick] (N8)--(p5)--(N9);

\draw[thick] (q1)--(p6);
\draw[thick] (q2)--(p2);
\draw[thick] (q3)--(p4);
\draw[thick] (p1)--(p2)--(p3)--(p4)--(p5)--(p6)--(p1);

\end{tikzpicture}
\begin{tikzpicture}
\equiccnolabels[1cm]{10}{0}{0}
 \foreach \i in {1,...,6} {
    \coordinate (p\i) at ($(90-\i*360/6:.4)$);}

\foreach \i in {1,...,3} {
    \coordinate (q\i) at ($(190-\i*340/3:.7)$);}

\coordinate (p7) at (0,.7);

\draw[thick] (N10)--(p7)--(N1);
\draw[thick] (N9)--(p7);
\draw[thick] (N2)--(p1);
\draw[thick] (N3)--(q2)--(N4);
\draw[thick] (N5)--(p3);
\draw[thick] (N6)--(q3)--(N7);
\draw[thick] (N8)--(p5);

\draw[thick] (p7)--(p6);
\draw[thick] (q2)--(p2);
\draw[thick] (q3)--(p4);
\draw[thick] (p1)--(p2)--(p3)--(p4)--(p5)--(p6)--(p1);

\end{tikzpicture}
\begin{tikzpicture}
\equiccnolabels[1cm]{10}{0}{0}
\coordinate (p1) at (0,0);
\coordinate (p2) at (-.3,0);
\coordinate (p3) at (.3,0);
\coordinate (p4) at (-.5,.3);
\coordinate (p5) at (-.5,-.3);
\coordinate (p6) at (.5,.3);
\coordinate (p7) at (.5,-.3);

\draw[thick] (N10)--(p4);
\draw[thick] (p1)--(N5);
\draw[thick] (N9)--(p4)--(N8);
\draw[thick] (N7)--(p5)--(N6);
\draw[thick] (N4)--(p7)--(N3);
\draw[thick] (N1)--(p6)--(N2);
\draw[thick] (p6)--(p3)--(p7);
\draw[thick] (p4)--(p2)--(p5);
\draw[thick] (p3)--(p1)--(p2);
\end{tikzpicture}
\begin{tikzpicture}
\equiccnolabels[1cm]{10}{0}{0}
\coordinate (p1) at (0,0);
\coordinate (p2) at (-.3,0);
\coordinate (p3) at (.3,0);
\coordinate (p4) at (-.5,.3);
\coordinate (p5) at (-.5,-.3);
\coordinate (p6) at (.5,.3);
\coordinate (p7) at (.5,-.3);

\draw[thick] (N10)--(p6);
\draw[thick] (p1)--(N5);
\draw[thick] (N9)--(p4)--(N8);
\draw[thick] (N7)--(p5)--(N6);
\draw[thick] (N4)--(p7)--(N3);
\draw[thick] (N1)--(p6)--(N2);
\draw[thick] (p6)--(p3)--(p7);
\draw[thick] (p4)--(p2)--(p5);
\draw[thick] (p3)--(p1)--(p2);
\end{tikzpicture}
\begin{tikzpicture}
\equiccnolabels[1cm]{10}{0}{0}
\coordinate (p1) at (0,0);
\coordinate (p2) at (-.3,0);
\coordinate (p3) at (.3,0);
\coordinate (p4) at (-.5,.3);
\coordinate (p5) at (-.5,-.3);
\coordinate (p6) at (.5,.3);
\coordinate (p7) at (.5,-.3);

\draw[thick] (N10)--(p4);
\draw[thick] (p1)--(N5);
\draw[thick] (N9)--(p4);
\draw[thick] (N8)--(p5);
\draw[thick] (N7)--(p5)--(N6);
\draw[thick] (N4)--(p7)--(N3);
\draw[thick] (N1)--(p6)--(N2);
\draw[thick] (p6)--(p3)--(p7);
\draw[thick] (p4)--(p2)--(p5);
\draw[thick] (p3)--(p1)--(p2);
\end{tikzpicture}
\begin{tikzpicture}
\equiccnolabels[1cm]{10}{0}{0}
\coordinate (p1) at (0,0);
\coordinate (p2) at (-.3,0);
\coordinate (p3) at (.3,0);
\coordinate (p4) at (-.5,.3);
\coordinate (p5) at (-.5,-.3);
\coordinate (p6) at (.5,.3);
\coordinate (p7) at (.5,-.3);

\draw[thick] (N10)--(p6);
\draw[thick] (p1)--(N5);
\draw[thick] (N9)--(p4)--(N8);
\draw[thick] (N2)--(p7);
\draw[thick] (N7)--(p5)--(N6);
\draw[thick] (N4)--(p7)--(N3);
\draw[thick] (N1)--(p6);
\draw[thick] (p6)--(p3)--(p7);
\draw[thick] (p4)--(p2)--(p5);
\draw[thick] (p3)--(p1)--(p2);
\end{tikzpicture}
\begin{tikzpicture}
\equiccnolabels[1cm]{10}{0}{0}
\coordinate (p1) at (0,0);
\coordinate (p2) at (-.3,0);
\coordinate (p3) at (.3,0);
\coordinate (p4) at (-.5,.3);
\coordinate (p5) at (-.5,-.3);
\coordinate (p6) at (.5,.3);
\coordinate (p7) at (.5,-.3);

\draw[thick] (N10)--(p6);
\draw[thick] (p1)--(N5);
\draw[thick] (N9)--(p4)--(N8);
\draw[thick] (N2)--(p7);
\draw[thick] (N7)--(p5)--(N6);
\draw[thick] (p7)--(N3);
\draw[thick] (N1)--(p6);
\draw[thick] (p1)--(N4);
\draw[thick] (p6)--(p3)--(p7);
\draw[thick] (p4)--(p2)--(p5);
\draw[thick] (p3)--(p1)--(p2);
\end{tikzpicture}
\end{center}
\end{example}

\section{Future Directions}

We have given a rotationally invariant basis for $S^{(d^3, 1^{n-3d})}$, and a natural question to ask is whether there is a way to generalize these results to $r>3$. Many of the results in Section 5 as well as the crossing reduction rule in Section 6 readily generalize when $r$ is odd (when $r$ is even, a different treatment of signs is needed, as a change in orientation will not give a global change in sign). We can thus obtain spanning set for an $\mathfrak{S}_n$ invariant submodule containing $S^{d^r, 1^{n-rd}}$ and the question remains as to how to prune it down to a basis, as the results of Section 4 do not seem to readily generalize. There are two directions in which one might approach this problem. The first is to begin by looking for a rotationally invariant set of the right size via $WNC(n,d,r)$:
\begin{problem}
\label{bijectgeneral}
Is there a combinatorially nice injection of $WNC(n,d,r)$ for $r>3$ into the set of normal plabic graphs, such that the image is closed under rotation?
\end{problem}
The second approach is to determine skein relations first, and use those to reduce the set of normal plabic graphs to a basis.
\begin{problem}
\label{skeingeneral}
Extend the definitions from Section 5 to $r>3$ for $r$ odd. What are the corresponding skein relations?
\end{problem}
 We expect these questions to likely be quite difficult, as answering both would encompass constructing a rotationally invariant basis of $S^{d^r}$, a question which was only recently answered in the case $r = 4$ by C. Gaetz, O Pechenik, S. Pfannerer, J. Striker, and J. Swanson \cite{GPPSS} and remains open for $r >4$. However, most investigation into this question has been concerned with finding a {\em subset} of $SL_n$ webs which forms a basis. Towards this end, it is perhaps a feature, rather than a bug of our construction that it does not consist of genuine $SL_r$ webs, but rather linear combinations of ones with the same underlying simple graph and its minors, as it gives a new place to search.

If we do consider the difference between our augmented web invariants and $SL_3$ web invariants to be something to be fixed, we can do so by constructing a poset on classical $SL_3$ webs with $W \leq V$ whenever $W$ is a weblike subgraph of $V$, which is equivalent to the graph minor poset restricted to $SL_3$ webs. By Corollary~\ref{weblikecor} can thus write
\[
[W] = \sum_{V \leq W} h(V,W) [V]_{SL_3}
\]
where $h$ is an element of the incidence algebra for this poset which is defined up to sign in Corollary~\ref{weblikecor}, and the sign is defined implicitly in the preceeding exposition. Inverting $h$ would then recover the classical $SL_3$ web invariants. We thus propose the following:
\begin{problem}
Extend the definition of $h$ to the poset of perfectly orientable normal plabic graphs with order given by graph minors. Is there a simple combinatorial description of the inverse of $h$ in the incidence algebra?
\end{problem}

One important property of the $m$-diagram construction of $SL_3$ webs is that, as shown by Petersen, Pylyavskyy, and Rhoades, it intertwines promotion on rectangular tableaux and rotation of webs \cite{PPR}, thereby giving an algebraic proof of the cyclic sieving phenomenon for promotion on three-row rectangles. This is not the case for $n>3d$, however, as promotion for tableaux of shape $(d^3, 1^{n-3d})$ for $n>3d$ is not so well-behaved and the order of promotion does not divide $n$ in general. It may be interesting to investigate if there is a variant of promotion which our bijections do intertwine.
\begin{problem}
Give a combinatorial description similar to promotion of the cyclic action on standard Young tableaux of shape $(d^3, 1^{n-3d})$ given by the pullback of rotation on webs. Does the combinatorial description have a natural extension to other shapes? If so, which shapes have order dividing $n$?
\end{problem}
A combinatorially defined cyclic action with order dividing $n$ for another family of partition shapes would be good evidence for the existence of a web basis for those shapes. It is not clear that the bijection we give is necessarily the most natural, so in answering this question one may want to consider other possible bijections between standard Young tableaux and augmented webs.

\section{Acknowledgements}
We thank Brendon Rhoades, Oliver Pechenik, and Pasha Pylyavksyy for helpful comments and conversations.

\bibliographystyle{amsplain}
\bibliography{augwebs6.bib}

\end{document}